\documentclass[11pt]{amsart}  

\usepackage[utf8]{inputenc}

\usepackage[left=1in,right=1in,top=1in,bottom=1.17in]{geometry}

\usepackage{graphicx,amssymb,amsmath,amsthm,wrapfig}
\usepackage{caption}
\usepackage{subcaption}
\usepackage{indentfirst}
\usepackage{cancel}
\usepackage{lipsum}
\usepackage{epstopdf}
\usepackage{epsfig}
\usepackage{comment}
\usepackage{enumerate,color}
\usepackage{empheq}
\usepackage{color}
\usepackage{hyperref}

\theoremstyle{plain}
\newtheorem{theorem}{Theorem}[section]
\newtheorem{corollary}[theorem]{Corollary}
\newtheorem{lemma}[theorem]{Lemma}
\newtheorem{definition}[theorem]{Definition}
\newtheorem{proposition}[theorem]{Proposition}

\newtheorem*{propositionprime}{Proposition \ref{fourth}$'$}
\numberwithin{equation}{section}

\theoremstyle{definition}
\newtheorem{remark}[theorem]{Remark}

\newcommand{\RNum}[1]{\uppercase\expandafter{\romannumeral #1\relax}}

\title[$L^2$-contraction and stability of Cahn--Hilliard fronts]{$L^2$-contraction and asymptotic stability \\ of Cahn--Hilliard fronts}

\author[B. Kwon and W. Shim]{Bongsuk Kwon and Wanyong Shim}

\address{(Bongsuk Kwon) Department of Mathematical Sciences, Ulsan National Institute of Science and Technology, Ulsan, 44919, Korea}
\email{bkwon@unist.ac.kr}

\address{(Wanyong Shim) Department of Mathematical Sciences, Korea Advanced Institute of Science and Technology, Daejeon, 34141, Korea}
\email{wyshim25@kaist.ac.kr}

\subjclass{35K55, 35B35, 35B40, 35C07}
\keywords{Cahn--Hilliard equation; front; kink; stability}

\thanks{\textbf{Acknowledgment.}
B.K. was supported by the National Research Foundation of Korea(NRF) grant funded by the Korea government(MSIT)(00560003). W.S. was supported by Basic Science Research Program through the National Research Foundation of Korea(NRF) funded by the Ministry of Education(RS-2025-25429122).}

\begin{document}

\begin{abstract}
We study the stability of transition fronts for the one-dimensional Cahn--Hilliard equation. More precisely, we prove that for any H\"older initial datum sufficiently close to a front in the $L^2$ norm, the corresponding solution of the Cahn--Hilliard equation exists globally and converges, up to a dynamical shift, to the front as time tends to infinity. For the proof, we develop an $L^2$-stability framework adapted to Cahn--Hilliard fronts. The key ingredient is a nontrivial second-order Poincar\'e-type inequality that reveals a coercive structure in the indefinite quadratic energy form associated with the linearized operator about the front, once the dynamical shift is taken into account. This yields an $L^2$-contraction estimate and, via a far-field semigroup argument, asymptotic orbital stability of the front in the unweighted $L^2$ topology. The stability analysis requires only the $L^2$-smallness of the initial perturbation; no higher-order smallness, spatial localization, or moment assumptions are imposed.

\end{abstract}
\maketitle


\section{Introduction}\label{sec1}
We consider the one-dimensional Cahn--Hilliard equation
\begin{equation}\label{eq:gCH}
u_t = \left(-u_{xx}+B(u)\right)_{xx}, \quad x\in\mathbb R,\  t\geq0,
\end{equation}
where $B=F'$ for a given homogeneous free energy density $F$. Here $u=u(x,t)$ is a conserved order parameter, representing, for instance, the local concentration difference between two components of a binary mixture. The Cahn--Hilliard equation was introduced by Cahn and Hilliard \cite{CH} as a continuum model for interfacial phenomena in nonuniform binary systems, and it has since become a fundamental model for phase separation and spinodal decomposition in binary alloys \cite{Cahn}. In one space dimension, it provides a basic model for the evolution of diffuse interfaces separating two stable phases.

Let $u_-<u_+$ be two prescribed constants representing the stable phases. We focus on the phase-separation regime in which the homogeneous free energy density is given by the quartic double-well potential
\begin{equation}\label{eq:F}
F(u) = \frac{\kappa^2}{2} \left[ \left(u-\frac{u_-+u_+}{2}\right)^2 - \left(\frac{u_+-u_-}{2}\right)^2 \right]^2,
\end{equation}
where $\kappa>0$ is a scaling parameter. The potential $F$ has two equal-depth wells at $u_-$ and $u_+$. The corresponding nonlinear term is
\begin{equation}\label{eq:B}
B(u) = \kappa^2 (u_-+u_+-2u)(u-u_-)(u_+-u).
\end{equation}

Equation \eqref{eq:gCH} has a natural variational structure: it can be written as
\begin{equation*}
u_t=\partial_{xx}\mu, \quad \mu=-u_{xx}+F'(u),
\end{equation*}
where $\mu$ is the chemical potential. Thus \eqref{eq:gCH} is the $H^{-1}$-gradient flow of the one-dimensional Ginzburg--Landau energy
\begin{equation} \label{GL_energy}
E[u] = \int_{\mathbb R} \left( \frac12 |u_x|^2+F(u) \right)\,dx .
\end{equation}
For sufficiently smooth solutions, the energy satisfies the dissipation identity
\begin{equation} \label{GL_dissipation}
\frac{d}{dt}E[u(t)] = -\int_{\mathbb R}|\mu_x|^2\,dx \leq 0.
\end{equation}
At the same time, the equation preserves mass:
\begin{equation} \label{mass}
\frac{d}{dt}\int_{\mathbb R}u(x,t)\,dx = 0,
\end{equation}
whenever the integral is well-defined, or after an appropriate renormalization around the end states. This combination of energy dissipation and mass conservation is one of the central structural features of the Cahn--Hilliard dynamics and is closely related to its role in diffuse-interface motion, coarsening, and pattern formation \cite{EZ, EG, NCS}.

\subsection{Transition front}
For the double-well potential \eqref{eq:F}, the Cahn--Hilliard equation \eqref{eq:gCH} admits a family of explicit monotone stationary transition fronts connecting the two stable phases. More precisely, for any $x_0\in\mathbb R$, define
\begin{equation}\label{front}
\bar u(x) = \frac{u_-+u_+ e^{\kappa L(x-x_0)}}{1+e^{\kappa L(x-x_0)}} = \frac{u_-+u_+}{2} + \frac{L}{2} \tanh\left( \frac{\kappa L}{2}(x-x_0) \right), \quad L:=u_+-u_-.
\end{equation}
Then \(\bar u\) satisfies
\begin{equation}\label{eq:ubar}
\bar u' = \kappa(\bar u-u_-)(u_+-\bar u)>0
\end{equation}
and
\begin{equation*}
\lim_{x\to-\infty}\bar u(x)=u_-, \quad \lim_{x\to+\infty}\bar u(x)=u_+.
\end{equation*}
Differentiating \eqref{eq:ubar}, we obtain the stationary front equation
\begin{equation*}
-\bar u''+B(\bar u)=0.
\end{equation*}
The normalized kink is recovered by choosing $u_-=-1$, $u_+=1$, and $\kappa=\frac12$. In this case,
\begin{equation*}
F(u)=\frac18(u^2-1)^2, \quad B(u)=\frac12u^3-\frac12u,
\end{equation*}
and the corresponding front is, up to translation, given by
\begin{equation*}
\bar u(x)=\tanh\left(\frac{x}{2}\right).
\end{equation*}

\subsection{Background and contributions}
The stability of transition fronts is a natural problem in the analysis of phase-separating systems. For Cahn--Hilliard fronts, however, this problem involves two central difficulties. The first is familiar from the theory of viscous shocks and other translation-invariant coherent structures: the linearized operator about the front has essential spectrum extending up to the origin. Thus, even when spectral stability holds, the neutral mode associated with translation is not separated from the continuous spectrum. This absence of a spectral gap prevents a direct application of the standard semigroup decomposition and also obstructs the usual $L^2$-energy method, since the linearized dissipation does not immediately yield a coercive estimate strong enough to close the nonlinear argument. The second difficulty is more specific to the Cahn--Hilliard dynamics. Although the equation has both a Lyapunov functional and a conservation law, it has neither an $L^1$-contraction property nor a maximum principle. Consequently, one cannot rely on the pointwise or $L^1$-based controls that are often used in scalar parabolic problems to control the displacement of the front.

Despite these difficulties, the stability of the one-dimensional Cahn--Hilliard front has been established by several different methods. Bricmont, Kupiainen, and Taskinen \cite{BKT} established a foundational nonlinear stability result in a weighted \(L^\infty\)-type space using an inductive renormalization-group method, obtaining detailed large-time asymptotics. Carlen, Carvalho, and Orlandi \cite{CCO} gave a variational proof based on excess free energy and a dissipation dichotomy, assuming \(L^2\)-smallness together with a spatial moment bound. Howard \cite{Ho} studied transition fronts for generalized Cahn--Hilliard equations by a pointwise semigroup method, proving asymptotic orbital stability for algebraically localized perturbations under an Evans-function spectral condition that was later verified in \cite{Ho20}. For further applications of this pointwise semigroup approach to planar and multidimensional fronts, as well as to Cahn--Hilliard systems, we refer to \cite{Ho1,Ho2,Ho3,Ho5,Ho6,HK,HK1,HK2}.

The above results in \cite{BKT,CCO,Ho} address the missing spectral gap through distinct approaches: renormalization-group analysis, variational dissipation estimates, and sharp Green-function bounds, respectively. A common feature of these results is that they are formulated under spatial localization assumptions on perturbations, in the form of weighted norms, moment bounds, or algebraic decay. In particular, in \cite{CCO}, the moment assumption is used to compensate for the lack of an $L^1$-contraction property or a maximum principle. Consequently, the admissible class of initial perturbations is restricted by these localization requirements.

In this context, we establish asymptotic orbital stability directly in the unweighted \(L^2\) topology, without imposing any spatial localization or moment assumptions on the initial perturbation. This is achieved by an \(L^2\)-energy method adapted to the Cahn--Hilliard front. Our analysis identifies a coercive mechanism at the level of the \(L^2\)-energy estimate, yielding an \(L^2\)-contraction bound together with an integrated dissipation estimate. The same dissipation precisely controls the source term in a far-field semigroup formulation, which in turn yields asymptotic convergence via a semigroup argument for the corresponding damped far-field operator. This result reveals that the stability of the front is not intrinsically tied to prescribed spatial localization of perturbations; the unweighted \(L^2\) energy framework already contains enough structure to drive convergence to the translation orbit.

The present approach can also be viewed from a broader methodological perspective. In the stability theory of viscous shock profiles, Mascia--Zumbrun and Zumbrun--Howard developed the pointwise semigroup method, based on sharp Green function bounds and Evans-function spectral information, to overcome precisely the lack of a spectral gap \cite{MZ1, MZ2, MZ3, ZH}. This method is powerful and provides refined pointwise and large-time information, but it requires substantial technical work, including detailed spectral analysis and pointwise Green-function estimates. Against this background, a different philosophy has recently emerged in the study of shock stability. In the series of works \cite{KV0,KV,KV1,KV2}, Kang and Vasseur introduced a direct \(L^2\)-energy based approach, using carefully chosen shifts and degenerate Poincar\'e-type inequalities, to obtain contraction and stability estimates for shock profiles. This development showed that the correct coercive structure may be hidden at the energy level and can be recovered by inequalities adapted to the profile.

Our argument follows this philosophy, but the problem considered here is substantially different. The Cahn--Hilliard front is governed by fourth-order, surface-tension-driven dynamics rather than by compression-driven dynamics with second-order viscosity. Thus the coercivity needed for nonlinear stability is not a direct consequence of the usual Poincar\'e inequalities used for viscous shock profiles; it requires instead a second-order degenerate Poincar\'e inequality; see Proposition~\ref{fourth}. Although weighted Poincar\'e and Hardy--Poincar\'e inequalities for degenerate weights are classical, the estimate used here is more specialized: its coercivity is not apparent from the pointwise structure of the integrand, but is revealed through a Legendre-polynomial decomposition tailored to the transition front. As a result, this nontrivial Poincar\'e-type inequality allows us, within the \(L^2\) framework, to address simultaneously the two main difficulties described above: the missing spectral gap and the absence of \(L^1\)-based control.

\subsection{Notation and function spaces}
Before stating the main theorem, we fix the function-space notation needed below. Throughout the paper, all H\"older spaces consist of bounded functions. For \(\alpha\in(0,\frac12]\), we equip \(C^\alpha(\mathbb R)\) with the norm
\begin{equation*}
\|f\|_{C^\alpha(\mathbb R)}:=\|f\|_{L^\infty(\mathbb R)}+\sup_{x\ne y}\frac{|f(x)-f(y)|}{|x-y|^\alpha}.
\end{equation*}
For \(\Omega=I\times J\subset\mathbb R\times[0,\infty)\), we use the parabolic H\"older norm
\begin{equation*}
\|f\|_{C^{\alpha,\alpha/4}(\Omega)}:=\|f\|_{L^\infty(\Omega)}+\sup_{\substack{x,y\in I,\ x\ne y\\ t\in J}}\frac{|f(x,t)-f(y,t)|}{|x-y|^\alpha}+\sup_{\substack{x\in I\\ t,s\in J,\ t\ne s}}\frac{|f(x,t)-f(x,s)|}{|t-s|^{\alpha/4}}.
\end{equation*}
Finally, \(C^{4+\alpha,1+\alpha/4}(\Omega)\) denotes the usual parabolic H\"older space consisting of bounded functions \(f\) such that \(\partial_x^j f\), \(0\le j\le4\), and \(\partial_t f\) exist classically in \(\Omega\), equipped with the norm
\begin{equation*}
\|f\|_{C^{4+\alpha,1+\alpha/4}(\Omega)}:=\sum_{j=0}^3\|\partial_x^j f\|_{L^\infty(\Omega)}+\|\partial_x^4f\|_{C^{\alpha,\alpha/4}(\Omega)}+\|\partial_t f\|_{C^{\alpha,\alpha/4}(\Omega)}.
\end{equation*}

\subsection{Main result}

We now state the main result of this paper.

\begin{theorem} \label{Main}
Fix constants \(u_-<u_+\), \(\kappa>0\), and \(\alpha\in(0,\frac{1}{2}]\), and let \(\bar u\) be the transition front defined in \eqref{front}. There exists \(\varepsilon_0>0\) such that the following statement holds.

Assume that \(u_0\in C^\alpha(\mathbb R)\) satisfies
\begin{equation*}
\|u_0-\bar u\|_{L^2(\mathbb R)}\le\varepsilon_0.
\end{equation*}
Then the Cahn--Hilliard equation \eqref{eq:gCH} with initial data \(u_0\) admits a unique global-in-time solution \(u\) such that, for any \(0< \sigma < T < \infty \),
\begin{equation} \label{regularity}
u\in C^{\alpha,\alpha/4}(\mathbb R\times[0,T])\cap C^{4+\alpha,1+\alpha/4}(\mathbb R\times[\sigma,T]).
\end{equation}
In particular, \(u\) is a classical solution on \(\mathbb R\times(0,\infty)\). Moreover, there exists a shift function \(\delta\in C^1([0,\infty))\), with \(\delta(0)=0\), such that
\begin{equation} \label{cont}
\lVert u(\cdot+\delta(t),t) - \bar{u}(\cdot) \rVert_{L^2(\mathbb{R})} \leq \lVert u_0 - \bar{u} \rVert_{L^2(\mathbb{R})}
\end{equation}
for all $t \geq 0$. Furthermore,
\begin{equation} \label{L2stability}
\lim_{t \to \infty}{\lVert u(\cdot+\delta(t),t) - \bar{u}(\cdot) \rVert_{L^2(\mathbb{R})}} =0,
\end{equation}
and
\begin{equation} \label{deltalimit}
\lim_{t \to \infty}{|\dot{\delta}(t)|}= 0.
\end{equation}
\end{theorem}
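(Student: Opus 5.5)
We prove the theorem by an $L^2$-energy method with a dynamically chosen shift, in the spirit of Kang--Vasseur, followed by a far-field semigroup argument. First, local well-posedness: for $u_0\in C^\alpha$ we construct a local classical solution of \eqref{eq:gCH} via the fourth-order heat kernel $e^{-t\partial_x^4}$ and a fixed-point argument in $C^{\alpha,\alpha/4}$. Parabolic Schauder theory then gives the smoothing \eqref{regularity} on $[\sigma,T]$. The solution continues as long as $\|u\|_{L^\infty}$ stays bounded.

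Next we set $v(x,t)=u(x+\delta(t),t)-\bar u(x)$. The shift is defined by the ODE
\begin{equation*}
\dot\delta(t)=-M\int_{\mathbb R}\bar u''(x)\,v(x,t)\,dx,\qquad \delta(0)=0,
\end{equation*}
or an equivalent orthogonality-type choice. The constant $M$ is chosen so that the shift term in the energy identity cancels the bad projection onto the translation mode $\bar u'$. Differentiating $\frac12\|v\|_{L^2}^2$ gives
\begin{equation*}
\frac12\frac{d}{dt}\|v\|_{L^2}^2=-\mathcal Q(v)+\dot\delta\int \bar u' v\,dx+\mathcal N(v),
\end{equation*}
where $\mathcal Q(v)=\int v_{xx}^2+\int B'(\bar u)v_x^2+\frac12\int B''(\bar u)\bar u' (v^2)_x$ (schematically) is the indefinite quadratic form of $L=\partial_x^2(-\partial_x^2+B'(\bar u))$, and $\mathcal N$ collects the cubic and quartic terms. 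Changing variables to $y=\tanh(\kappa L(x-x_0)/2)\in(-1,1)$, the form $\mathcal Q$ together with the shift contribution becomes a degenerate weighted quadratic form in $y$. Proposition~\ref{fourth} states exactly that this combined form controls $\int v_{xx}^2+\int v_x^2$ plus a weighted $L^2$ term. We then bound $\mathcal N$ by Gagliardo--Nirenberg as $\|v\|_{L^2}^{\theta}\mathcal D(v)$. Under a bootstrap assumption $\|v\|_{L^2}\le 2\varepsilon_0$, this gives
\begin{equation*}
\frac{d}{dt}\|v\|_{L^2}^2+c\,\mathcal D(v)\le 0,\qquad \mathcal D(v)=\|v_{xx}\|_{L^2}^2+\|v_x\|_{L^2}^2+\cdots .
\end{equation*}
This closes the bootstrap and yields \eqref{cont} and $\int_0^\infty\mathcal D\,dt<\infty$. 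Since $v_x$ is bounded in $L^2$ along the way (via smoothing plus the dissipation integral), the $L^\infty$ norm stays controlled by Sobolev embedding, so the solution is global. Global existence also needs a short-time argument near $t=0$, where only $C^\alpha$ is available; interpolation between the $C^\alpha$ bound and $L^2$ handles this.

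For convergence, the difficulty is that $\mathcal D$ need not control $\|v\|_{L^2}$ itself because there is no spectral gap, so $\int_0^\infty\mathcal D<\infty$ does not give decay. We split $v=\chi v+(1-\chi)v$. The near-field part is controlled by $\mathcal D$ through the weighted term. The far field satisfies $w_t=-w_{xxxx}+B'(u_\pm)w_{xx}+S$, a damped operator whose semigroup is an $L^2$ contraction decaying for each fixed datum. Its source $S$ involves commutators and nonlinear terms bounded by $\mathcal D^{1/2}$, which lies in $L^2_t$. A Duhamel argument then gives $\|w(t)\|_{L^2}\to0$, using that the source is square-integrable in time and the semigroup decays strongly on fixed data. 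Combining the two parts yields \eqref{L2stability}. Finally, \eqref{deltalimit} follows from $|\dot\delta|\lesssim\|v\|_{L^2}\to0$.

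The main obstacle is the coercivity step: matching the energy identity exactly to the structure of Proposition~\ref{fourth}. This requires choosing $M$ and the weights so that the Legendre-mode obstruction (the $\bar u'$ direction) is removed by the shift term, without losing the dissipation needed to absorb $\mathcal N$.
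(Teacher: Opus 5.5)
\textbf{The coercivity step fails with the shift you chose.} With $\dot\delta=-M\int_{\mathbb R}\bar u''v\,dx$, the shift term in the energy identity is $-M\langle\bar u'',v\rangle\langle\bar u',v\rangle$. This is a product of two different linear functionals, so it is indefinite and cannot remove the bad direction for any $M$.

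The undamped form $\int q^3\Phi_{yy}^2\,dy+\int V\Phi^2\,dy$ really is negative somewhere. In the normalized Legendre variables it equals $\kappa^3L^3\big(\sum d_na_n^2+2\sum e_na_na_{n+2}\big)$ with $d_0=0$, $e_0=\tfrac35$, $d_2=\tfrac65$. Hence $\Psi=P_0-\tfrac12P_2$ gives $Q_0[\Psi]=-\tfrac{3}{10}$. The corresponding $v$ tends to $\tfrac12$ at $\pm\infty$. Truncating it with a wide cut-off that is even about $x_0$ gives $H^2(\mathbb R)$ perturbations on which the form stays negative. For these, $\langle\bar u'',v\rangle=0$ because $\bar u''$ is odd about $x_0$, so your shift term vanishes. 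For small data in this direction $\frac{d}{dt}\|v\|_{L^2}^2>0$ at $t=0$ (the nonlinear terms are cubic), and \eqref{cont} fails.

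Proposition~\ref{fourth} needs precisely the extra term $\eta\big(\int_{u_-}^{u_+}\Phi\,dy\big)^2$. Since $dy=\bar u'\,dx$, this equals $\eta\big(\int_{\mathbb R}\bar u'v\,dx\big)^2$. The missing idea is to take $\dot\delta=-\eta\int_{\mathbb R}\bar u'v\,dx$ with $\eta=4\kappa^3L$, as in \eqref{delta}. Then $\dot\delta\int\bar u'v\,dx=-\eta\langle\bar u',v\rangle^2$ is a rank-one damping of exactly the mean mode, through which the translational direction $\propto 1-z^2$ enters. An orthogonality constraint $\langle v,\bar u'\rangle=0$ would instead put you in the mean-zero case of the Legendre estimate, but it generally forces $\delta(0)\neq0$, contrary to the statement.

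\textbf{The convergence step also has a gap.} After your cut-off, the source for $w=(1-\chi)v$ contains $\dot\delta\,(1-\chi)\bar u'$, which you do not account for. It has nonzero integral, so it is not $\partial_x$ of an $L^2$ flux, and it is only square-integrable in time. Since $\|e^{\tau L_\infty}f\|_{L^2}\sim\tau^{-1/4}$ when $\int f\neq0$, its Duhamel integral is not controlled by $\|\dot\delta\|_{L^2_t}$. A coefficient of size $\epsilon\tau^{-1/2}$ on an interval of length $\tau$ already produces a contribution of order $\epsilon\tau^{1/4}$.

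Two further points need work. The commutators with $\partial_x^4$ (e.g.\ $\chi'v_{xxx}$) must be kept in divergence form to be bounded by $\mathcal D^{1/2}$. And $\|\chi v(t)\|_{L^2}^2\lesssim\mathcal D(t)$ with $\mathcal D\in L^1_t$ does not by itself give $\|\chi v(t)\|_{L^2}\to0$.

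The paper avoids the cut-off entirely. Using $B'(\bar u)=\kappa^2L^2-6\kappa\bar u'$, it writes $\phi_t=\mathcal A_\infty\phi+\partial_x\mathcal F(\phi)$ with $\mathcal A_\infty=L_\infty-\eta\mathcal P$. The term $\dot\delta\bar u'=-\eta\mathcal P\phi$ then becomes part of a self-adjoint nonpositive generator with trivial kernel, hence strongly stable. All remaining terms are in divergence form with $\|\mathcal F(\phi)\|_{L^2}^2\le C\mathcal D$. The energy estimate for the Duhamel term uses $\kappa^2L^2\|w_x\|_{L^2}^2$ to absorb $\langle w_x,\mathcal F\rangle$, giving $\sup_{t\ge T}\|w(t)\|_{L^2}^2\le C\int_T^\infty\mathcal D\,ds\to0$. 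This again depends on the $\bar u'$-shift: with yours, the rank-one term $-M\bar u'\langle\bar u'',\cdot\rangle$ is neither symmetric nor dissipative.

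Finally, a pointwise-in-time bound on $\|v_x(t)\|_{L^2}$ for continuation does not follow from $\int\mathcal D<\infty$ alone. The paper obtains it from monotonicity of the Ginzburg--Landau energy \eqref{GL_energy} after a time $t_*$ with $\phi(\cdot,t_*)\in H^2$.
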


\begin{remark}[On the restriction \(\alpha\le \frac{1}{2}\)]
The restriction \(\alpha\le \frac{1}{2}\) does not preclude initial data with higher H\"older regularity. If \(u_0\in C^\gamma(\mathbb R)\) for some \(\gamma\in(\frac{1}{2},1)\), then \(u_0\in C^{1/2}(\mathbb R)\), so Theorem~\ref{Main} applies with \(\alpha=\frac{1}{2}\). The upper bound on \(\alpha\) enters only in the continuation argument of Section~\ref{sec2.1}, where the a priori estimate gives a uniform $H^1$-bound on the perturbation. By the one-dimensional Sobolev embedding, this yields \(C^{1/2}\)-control, and hence the continuation argument does not propagate H\"older regularity with exponent greater than $\frac{1}{2}$.
\end{remark}

\begin{remark}[Admissible solution classes]
The assumption \(u_0\in C^\alpha(\mathbb R)\) in Theorem~\ref{Main} is used to invoke the short-time existence theory for classical solutions of nonlinear parabolic equations in divergence form, following \cite{Ho4}. The stability argument itself does not rely on this specific H\"older regularity, but only on the validity of the construction of the shift, the density argument in the a priori estimate, and the Duhamel representation used in the far-field semigroup argument; cf. Appendix~\ref{app:L2-framework}. Consequently, for any global solution belonging to such an \(L^2\)-admissible class, the conclusions \eqref{cont}--\eqref{deltalimit} also hold.
\end{remark}

\begin{remark}[On the limiting front location]
The convergence in \eqref{L2stability} implies that the transition front $\bar u$ is asymptotically orbitally stable, in the sense that the solution converges asymptotically to the translation orbit of the front. This statement, however, does not identify a limiting translate. From \eqref{deltalimit}, together with \(\delta(0)=0\), it follows that
\begin{equation*}
\frac{\delta(t)}{t}=\frac{1}{t}\int_0^t\dot\delta(s)\,ds \to 0 \quad \text{as }t\to\infty.
\end{equation*}
Thus the front location may drift sublinearly in time. This is a natural consequence of the present \(L^2\) framework: under stronger localization hypotheses, a limiting translate is typically identified through a conserved relative mass; cf. \cite{BKT, CCO, Ho}. If the relative mass
\begin{equation*}
m(t):=\int_{\mathbb R}(u(x,t)-\bar u(x))\,dx
\end{equation*}
is well defined, then mass conservation \eqref{mass} yields \(m(t)=m(0)\), and convergence to a fixed translate \(\bar u(\cdot-\delta_\infty)\) would formally impose
\begin{equation*}
m(0)=\int_{\mathbb R}(\bar u(x-\delta_\infty)-\bar u(x))\,dx=-(u_+-u_-)\delta_\infty.
\end{equation*}
For a merely \(L^2\)-small perturbation, however, this relative mass need not be finite, and the corresponding mass-selection mechanism is not available. Consequently, identifying a limiting translate generally requires additional assumptions, such as \(L^1\)-localization or suitable moment conditions on the initial perturbation.
\end{remark}

\begin{remark}
Theorem~\ref{Main} is established for the Cahn--Hilliard front associated with the particular quartic double-well potential \eqref{eq:F}, but the quartic algebraic form may not be essential for the stability mechanism developed in this paper. The key ingredient in our argument is the degenerate weighted Poincar\'e-type coercivity estimate, and its weights are determined by the potential through the profile equation. For a general equal-depth double-well potential with nondegenerate minima, the first integral of the profile ODE suggests that the corresponding first-order weight has the same leading-order endpoint degeneracy as \(1-z^2\), as in the quartic case. This observation leads us to expect that the coercivity mechanism, and hence the argument, persists for a suitable class of admissible double-well potentials, although the second-order estimate is more delicate because of the indefinite zeroth-order contribution. We leave such generalizations for future work.
\end{remark}

\subsection{Proof strategy and organization of the paper}
The proof proceeds in two main stages. First, we derive an a priori estimate for a suitably shifted perturbation and use it to obtain the global $L^2$-contraction bound \eqref{cont}. Second, we combine the resulting dissipation estimate with a far-field semigroup argument to prove asymptotic stability.

The main point in the first stage is to identify the coercive mechanism behind the $L^2$-energy estimate. We define the perturbation $\phi(x,t):=u(x+\delta(t),t)-\bar u(x)$ and the linearized operator
\begin{equation*}
\mathcal L\phi:=\partial_x^2(-\phi_{xx}+B'(\bar u)\phi).
\end{equation*}
The associated quadratic energy form $-\langle \phi, \mathcal{L}\phi \rangle$ is not coercive by itself, due to the translational mode. The coercive estimate is obtained by combining this form with the damping term generated by the shift and rewriting the resulting expression in the front coordinate $y=\bar u(x)$.

The key observation is that the front coordinate \(y=\bar u(x)\) performs two tasks at once. It compactifies the whole-line problem to the finite interval \([u_-,u_+]\), and, through the explicit tanh structure of the front, it transforms the linearized energy form into a weighted form with Legendre structure. More precisely, letting \(\Phi(y)=\phi(x)\) and \(q(y)=\bar u'(x)=\kappa(y-u_-)(u_+-y)\), the differential part of the linearized energy becomes a degenerate quadratic form involving \(q^3\Phi_{yy}^2\) and a zeroth-order potential; see Section~\ref{sec2.3.2}. After the normalization of \([u_-,u_+]\) to \([-1,1]\), the weight \(q(y)\) becomes a multiple of \(1-z^2\), the coefficient appearing in the Legendre operator. This allows us to use the Legendre polynomial expansion to estimate the compactified energy form; see Section~\ref{sec4.1} and Appendix~\ref{app:Q0-coefficients}. 

This reformulation reveals, at the \(L^2\)-energy level, the obstruction to coercivity caused by translation invariance. In the normalized variable, the translational direction is proportional to \(1-z^2\), hence it is a combination of low-order Legendre modes and has nonzero mean. The shift is then chosen by \(\dot\delta(t)=-\eta\int_{\mathbb R}\bar u'\phi\,dx\), with \(\eta=4\kappa^3L\), so that the energy identity contains the term \(\eta(\int_{\mathbb R}\bar u'\phi\,dx)^2\). In the compactified variable, this becomes the rank-one damping term \(\eta(\int_{u_-}^{u_+}\Phi \,dy)^2\) acting on the mean, thereby removing the mean-mode degeneracy. Writing the normalized function as \(\Psi\), the resulting coercive estimate is given by the following front-adapted Poincar\'e-type inequality:
\begin{equation*}
\begin{split}
& \frac18\int_{-1}^{1}(1-z^2)^3 \Psi_{zz}(z)^2\,dz +\frac34\int_{-1}^{1}(3z^2-1)\Psi(z)^2\,dz + \left( \int_{-1}^{1}\Psi(z)\,dz \right)^2 \\
& \quad \geq \frac{3}{56} \left( \int_{-1}^{1}(1-z^2)\Psi_z(z)^2\,dz + \left( \int_{-1}^{1}\Psi(z)\,dz \right)^2 \right) \\
& \quad \geq \frac{3}{28}\int_{-1}^{1}\Psi(z)^2\,dz.
\end{split}
\end{equation*}

Once this coercivity is available, the nonlinear terms are absorbed by the smallness of the \(L^2\)-perturbation, giving the global \(L^2\)-contraction estimate and an integrated dissipation bound. The asymptotic convergence is proved by a separate semigroup argument. The perturbation equation is rewritten around the damped constant-coefficient operator \(\mathcal A_\infty\); its homogeneous semigroup is strongly stable, while the inhomogeneous source is controlled by the dissipation estimate obtained above. This proves asymptotic orbital stability in $L^2$.

The paper is organized as follows. Section~\ref{sec2} establishes global existence and the \(L^2\)-contraction estimate based on the front-adapted Poincar\'e-type inequalities. Section~\ref{sec3} proves asymptotic stability through the far-field semigroup formulation. Section~\ref{sec4} proves the Poincar\'e-type inequalities via the normalized Legendre formulation of the compactified energy form.

\section{\texorpdfstring{$L^2$-contraction property}{L2-contraction property}} \label{sec2}
In this section, we establish the global $L^2$-contraction bound \eqref{cont} in Theorem~\ref{Main}. We first present a local a priori contraction estimate under an $L^2$-smallness assumption, with the proof deferred to Section~\ref{sec2.3}. Combining this estimate with the short-time existence theory and a standard continuation argument, we prove that the resulting classical solution extends globally in time and satisfies the contraction estimate for all times.

We define the perturbation around the front $\bar{u}$ by
\begin{equation} \label{pert}
\phi(x,t) := u(x+\delta(t),t)-\bar{u}(x),
\end{equation}
where $u(x,t)$ is a solution of the Cahn--Hilliard equation \eqref{eq:gCH}, and $\delta(t)$ is a time-dependent shift function to be determined. Then $\phi$ satisfies the perturbation equation:
\begin{equation} \label{eq:pert}
\phi_t = \left(-\phi_{xx}+B'(\bar{u})\phi\right)_{xx} + \left(B(\bar{u}+\phi)-B(\bar{u})-B'(\bar{u})\phi\right)_{xx} + \dot{\delta}(t)\left(\bar{u}' + \phi_x\right).
\end{equation}
We define the shift function $\delta(t)$ as the solution of the ODE:
\begin{equation} \label{delta}
\dot{\delta}(t) = - \eta \int_\mathbb{R} \bar{u}'\phi \, dx, \quad \delta(0)=0,
\end{equation}
where $\textstyle \eta := 4\kappa^3 L >0$. For any bounded solution $u$, the exponential decay of $\bar u'$ and $\bar u''$ implies that the right-hand side of \eqref{delta} is well-defined and Lipschitz continuous in $\delta$. Indeed, writing $G(t,\delta)$ for the right-hand side of \eqref{delta}, the change of variables $z=x+\delta$ gives
\begin{equation*}
|G(t,\delta_1)-G(t,\delta_2)|\le \eta\|u(\cdot,t)\|_{L^\infty}\|\bar u''\|_{L^1}|\delta_1-\delta_2|.
\end{equation*}
Hence the scalar ODE \eqref{delta} admits a $C^1$ solution, at least locally in time; the a priori estimate below extends it to all $t\geq0$.

\subsection{A priori estimate and global continuation} \label{sec2.1}

We first specify the regularity class in which the a priori estimate is formulated.

\begin{definition}[Admissible perturbations] \label{def:admissible}
Let $T_0>0$, and let $\phi$ be a perturbation defined by \eqref{pert}, with $\delta$ defined by \eqref{delta}. We say that the perturbation \(\phi\) is admissible on \([0,T_0]\) if
\begin{equation*}
\phi\in C([0,T_0];L^2(\mathbb R)), \quad \delta\in C^1([0,T_0]),
\end{equation*}
and the underlying solution \(u\) satisfies
\begin{equation*}
u \in C^{\alpha,\alpha/4}(\mathbb{R} \times [0,T_0]) \cap C^{4+\alpha,1+\alpha/4}(\mathbb{R}\times [\sigma,T_0])
\end{equation*}
for any $\sigma \in (0,T_0)$.
\end{definition}

We state the local a priori estimate.

\begin{proposition}[A priori contraction estimate] \label{apriori}
There exist constants $\varepsilon_1>0$ and $c_0>0$ such that the following statement holds.

Let $T_0>0$, and let $\phi$ be an admissible perturbation on $[0,T_0]$ in the sense of Definition~\ref{def:admissible}, with $\delta$ defined by \eqref{delta}. If
\begin{equation*}
\sup_{0 \leq t \leq T_0} \lVert \phi(\cdot,t) \rVert_{L^2(\mathbb{R})} \leq \varepsilon_1,
\end{equation*}
then
\begin{equation} \label{aprioriest}
\lVert \phi(\cdot,t) \rVert_{L^2(\mathbb{R})}^2+c_0\int_0^t \mathcal{D}(s)\,ds\leq \lVert \phi(\cdot,0) \rVert_{L^2(\mathbb{R})}^2
\end{equation}
for all $t \in [0,T_0]$, where
\begin{equation*}
\mathcal{D}(s):=\lVert \phi_x(\cdot,s) \rVert_{H^1(\mathbb{R})}^2+\lVert(\sqrt{\bar u'}\phi)(\cdot,s)\rVert_{L^2(\mathbb{R})}^2+|\dot\delta(s)|^2.
\end{equation*}
\end{proposition}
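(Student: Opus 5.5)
The plan is to run an $L^2$-energy estimate on the perturbation equation \eqref{eq:pert}, first for $t\ge\sigma>0$ where $u$ is classical, and then to let $\sigma\to0$ using $\phi\in C([0,T_0];L^2)$. Multiplying \eqref{eq:pert} by $\phi$ and integrating by parts, legitimate by the exponential decay of $\bar u'$ and a density/cutoff argument, gives
\begin{equation*}
\frac12\frac{d}{dt}\|\phi\|_{L^2}^2 = -\int_{\mathbb R}\phi_{xx}^2\,dx - \int_{\mathbb R}B'(\bar u)\phi_x^2\,dx + \frac12\int_{\mathbb R}B'(\bar u)''\phi^2\,dx + \int_{\mathbb R}N(\phi)\phi_{xx}\,dx + \dot\delta\int_{\mathbb R}\bar u'\phi\,dx,
\end{equation*}
where $N(\phi)=B(\bar u+\phi)-B(\bar u)-B'(\bar u)\phi$. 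The term $\dot\delta\int\phi_x\phi\,dx$ vanishes, and by \eqref{delta} the last term equals $-\eta(\int\bar u'\phi)^2=-|\dot\delta|^2/\eta$. The linear part is exactly the indefinite form $\langle\phi,\mathcal L\phi\rangle$ plus the rank-one damping.

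The core step is to show that the quadratic form
\begin{equation*}
Q[\phi]:=\int\phi_{xx}^2+\int B'(\bar u)\phi_x^2-\frac12\int (B'(\bar u))''\phi^2+\eta\Big(\int\bar u'\phi\Big)^2
\end{equation*}
dominates $c\,\mathcal D$. First I split off a fraction of $\int\phi_{xx}^2$ and $\int\phi_x^2$. Since $B'(\bar u)\to$ a positive constant at $\pm\infty$ and is negative only on a compact core, the far field is fine, and the difficulty is concentrated near the front. Next I pass to the front coordinate $y=\bar u(x)$ with $\Phi(y)=\phi(x)$, using $dx=dy/q$ and $\partial_x=q\partial_y$ with $q=\kappa(y-u_-)(u_+-y)$. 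The relation $\int\bar u'\phi\,dx=\int\Phi\,dy$ turns the damping term into the mean-square term. The part of $\int\phi_{xx}^2+\int B'\phi_x^2-\frac12\int(B')''\phi^2$ that survives after dropping some nonnegative pieces becomes, after normalizing to $z\in[-1,1]$ and choosing $\eta=4\kappa^3L$, a constant multiple of the left-hand side of the displayed Legendre inequality in the introduction. I then invoke that inequality (Proposition~\ref{fourth}, proved in Section~\ref{sec4}). This gives control of $\int(1-z^2)\Psi_z^2$ and $\int\Psi^2$, which in $x$-variables are $\int\phi_x^2$ (up to constants) and $\int\bar u'\phi^2$. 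Finally, a convex combination with the retained fraction of $\int\phi_{xx}^2+\int\phi_x^2$ gives $Q[\phi]\ge c_1(\|\phi_x\|_{H^1}^2+\|\sqrt{\bar u'}\phi\|^2)+c_1|\dot\delta|^2$.

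I expect the main obstacle to be this step: carefully matching the $x$-form to the compactified form. In particular the cross terms from $\phi_{xx}=q(q\Phi_y)_y$ must be handled by integrating $qq'\Phi_y\Phi_{yy}$ terms by parts, with vanishing boundary contributions because $q$ vanishes at the endpoints. The constants should land exactly on $\tfrac18,\tfrac34$ and $1$.

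For the nonlinear term, note that $|N(\phi)|\le C(|\phi|^2+|\phi|^3)$ and that $N(\phi)_{xx}$ has been integrated onto $\phi_{xx}$. By Gagliardo--Nirenberg, $\|\phi\|_{L^\infty}\le C\|\phi\|_{L^2}^{3/4}\|\phi_{xx}\|_{L^2}^{1/4}$, so
\begin{equation*}
\int|\phi|^2|\phi_{xx}|\le\|\phi\|_{L^\infty}\|\phi\|_{L^2}\|\phi_{xx}\|_{L^2}\le C\varepsilon_1^{7/4}\|\phi_{xx}\|^{5/4}.
\end{equation*}
This bound is not absorbable by itself, since it lacks lower-order dissipation. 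I therefore interpolate differently: $\|\phi\|_{L^\infty}^2\le\|\phi\|_{L^2}\|\phi_x\|_{L^2}$, which gives $\int\phi^2|\phi_{xx}|\le\|\phi\|_{L^2}\|\phi\|_{L^\infty}\|\phi_{xx}\|\le C\varepsilon_1^{3/2}\|\phi_x\|^{1/2}\|\phi_{xx}\|$. This is still not quadratic in the dissipation. The better route is to write $N$ via $\int N(\phi)\phi_{xx}=-\int N(\phi)_x\phi_x$ with $N_x=(B'(\bar u+\phi)-B'(\bar u))\phi_x+(\dots)\bar u'\phi^2$. This yields $\int|\phi||\phi_x|^2\le\|\phi\|_{L^\infty}\|\phi_x\|^2$ together with $\int\bar u'\phi^2|\phi_x|$ terms. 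Both are bounded by $C(\|\phi\|_{L^\infty}+\|\phi\|_{L^\infty}^2)\mathcal D$. The factor $\|\phi\|_{L^\infty}\le C\|\phi\|^{1/2}\|\phi_x\|^{1/2}$ is not small pointwise in time, so I bound it instead by $\|\phi\|_{H^1}$. Smallness of that is obtained either from the energy dissipation \eqref{GL_dissipation}, giving a uniform $H^1$ bound, or by a bootstrap with a higher-order energy. I would first try the bootstrap on $\|\phi_x\|_{L^2}$ using the Ginzburg--Landau energy. With absorption, $\frac{d}{dt}\|\phi\|^2+c_0\mathcal D\le0$, and integrating from $\sigma$ and letting $\sigma\to0$ yields \eqref{aprioriest}.
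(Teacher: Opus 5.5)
The linear part follows the paper's route. The nonlinear estimate, however, has a genuine gap. After integrating $\int N(\phi)\phi_{xx}$ by parts, you bound the cubic terms by $\|\phi\|_{L^\infty}\mathcal D$ and the quartic term by $\|\phi\|_{L^\infty}^2\mathcal D$. Absorption then needs $\|\phi(\cdot,t)\|_{L^\infty}$ small for every $t\in(0,T_0]$, but the hypothesis gives only $\sup_t\|\phi\|_{L^2}\le\varepsilon_1$, which does not control $\|\phi\|_{L^\infty}$. A tall narrow bump has $\|\phi_0\|_{L^\infty}\sim 1$ and tiny $L^2$ norm. Since $u\in C^{\alpha,\alpha/4}$ up to $t=0$, $\|\phi(\cdot,t)\|_{L^\infty}\to\|\phi_0\|_{L^\infty}$ as $t\to0$, so absorption fails near $t=0$ and you cannot let $\sigma\to0$. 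Neither of your remedies fixes this:
\begin{itemize}
\item The Ginzburg--Landau energy gives only $\|\phi_x(t)\|_{L^2}\le\sqrt{2E[u(t_*)]}+\|\bar u'\|_{L^2}$ for $t\ge t_*$. That is an order-one bound, never a small one, and for merely H\"older data $E[u(t_*)]$ can blow up as $t_*\to0$. Feeding it into $\|\phi\|_{L^\infty}^2\le\|\phi\|_{L^2}\|\phi_x\|_{L^2}$ makes $\varepsilon_1$ depend on the solution and on $t_*$. That is not the statement, and it breaks the continuation argument of Section~\ref{sec2.1}, where $\varepsilon_1$ must be fixed before the solution.
\item A higher-order energy bootstrap would need higher-order smallness of $\phi_0$, which is exactly what is not assumed.
\end{itemize}

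The missing idea is to put the small $L^2$ norm on the undifferentiated factor and $L^4$ norms, controlled by Gagliardo--Nirenberg, on the dissipative quantities. With $N_x=\frac12B'''\bar u'\phi^2+B''(\bar u)\phi\phi_x+\frac12B'''\phi^2\phi_x$:
\begin{itemize}
\item $\int|\phi|\phi_x^2\le\|\phi\|_{L^2}\|\phi_x\|_{L^4}^2\le C\|\phi\|_{L^2}\|\phi_x\|_{L^2}^{3/2}\|\phi_{xx}\|_{L^2}^{1/2}\le C\varepsilon_1\|\phi_x\|_{H^1}^2$.
\item $\int\bar u'\phi^2\phi_x=-\frac13\int\bar u''\phi^3$, and $\int\bar u'|\phi|^3\le\|\phi\|_{L^2}\|\sqrt{\bar u'}\phi\|_{L^4}^2\le C\varepsilon_1(\|\sqrt{\bar u'}\phi\|_{L^2}^2+\|\phi_x\|_{L^2}^2)$.
\item The quartic contribution $-\frac12B'''\int\phi^2\phi_x^2$ is nonpositive because $B'''=12\kappa^2>0$. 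It is simply dropped, so no $\|\phi\|_{L^\infty}^2$ factor ever appears.
\end{itemize}
This gives $\mathcal N\le C\varepsilon_1\mathcal D$ with $\varepsilon_1$ depending only on the front.

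Two smaller corrections to the linear part. First, nothing needs to be split off. After integrating the cross term by parts, the $\Phi_y^2$ terms cancel exactly, since $(Bq)'=B'q+B^2/q$. The whole form then equals $\int q^3\Phi_{yy}^2+\int V\Phi^2+\eta(\int\Phi)^2$, and $\|\phi_{xx}\|^2$ is recovered from Corollary~\ref{fourth-cor}. Second, the endpoint term vanishes because $Bq\Phi_y^2=q_y\phi_x^2$ and $\phi_x\to0$ at infinity, not because $q$ vanishes.
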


We now prove the regularity assertion and the global contraction bound in Theorem~\ref{Main} using Proposition~\ref{apriori}.

\begin{proof}[Proof of \eqref{regularity} and \eqref{cont}]

By the short-time existence result for \(C^\alpha\cap L^2\) perturbations in Appendix~\ref{app:L2-persistence}, there exists \(T>0\) such that the solution $u$ with initial data \(u_0\) is defined on \([0,T]\) and its associated perturbation $\phi$ is admissible on \([0,T]\). Let \(T_{\max}\in(0,\infty]\) be the supremum of all such \(T>0\). Then the solution is defined on \([0,T_{\max})\). Moreover, for all \(T<T_{\max}\), the perturbation \(\phi\) is admissible on \([0,T]\), and \(u\) is classical on \(\mathbb R\times(0,T_{\max})\).

Choose $\varepsilon_0<\frac{\varepsilon_1}{2}$ and assume
\begin{equation*}
\lVert \phi (\cdot,0) \rVert_{L^2(\mathbb{R})} = \lVert u(\cdot,0)-\bar{u}\rVert_{L^2(\mathbb{R})} \leq \varepsilon_0.
\end{equation*}
Since $\phi \in C([0,T];L^2(\mathbb{R}))$ for any $T < T_{\max}$, there exists a time interval $[0,\tau_0] \subset [0,T_{\max})$ such that
\begin{equation*}
\sup_{0 \leq t \leq \tau_0}{\lVert \phi (\cdot,t) \rVert_{L^2(\mathbb{R})}} \leq \varepsilon_1.
\end{equation*}
The a priori estimate of Proposition~\ref{apriori} then applies on $[0,\tau_0]$, so that
\begin{equation*}
\sup_{0 \leq t \leq \tau_0}{\lVert \phi (\cdot,t) \rVert_{L^2(\mathbb{R})}} \leq \lVert \phi(\cdot,0) \rVert_{L^2(\mathbb{R})} \leq \varepsilon_0.
\end{equation*}
Thus, by a standard continuation argument, the a priori $L^2$-contraction estimate \eqref{aprioriest} remains valid up to any time $t<T_{\max}$.

To complete the proof, it remains to show \(T_{\max}=\infty\). Suppose, for contradiction, that \(T_{\max}<\infty\). By \eqref{aprioriest} and the definition of \(\mathcal D\), we may choose \(t_*\in(0,T_{\max})\) such that \(\phi(\cdot,t_*)\in H^2(\mathbb R)\). Set $\phi_*:=\phi(\cdot,t_*)$. Since \(u(\cdot+\delta(t_*),t_*)=\bar u+\phi_*\) and the Ginzburg--Landau energy is invariant under spatial translations, \eqref{GL_energy} gives
\begin{equation*}
E[u(t_*)]=\frac12\int_{\mathbb R}|(\phi_*)_x+\bar u'|^2\,dx+\int_{\mathbb R}F(\bar u+\phi_*)\,dx=:I+II.
\end{equation*}
The first term is estimated as
\begin{equation*}
I\le C\left(\|(\phi_*)_x\|_{L^2(\mathbb R)}^2+\|\bar u'\|_{L^2(\mathbb R)}^2\right)<\infty.
\end{equation*}
By Young's inequality, we have
\begin{equation*}
F(\bar{u} + \phi_*) \leq C (|(\bar u -u_-)(\bar u -u_+)|^2 + |\phi_*|^2 + |\phi_*|^3 + |\phi_*|^4).
\end{equation*}
This, together with $\lVert \phi_* \rVert_{L^\infty} \leq C \lVert \phi_* \rVert_{H^1}$, gives
\begin{equation*}
II\le C_{\bar u} + C\left(\|\phi_*\|_{L^\infty(\mathbb R)}^2\|\phi_*\|_{L^2(\mathbb R)}^2+\|\phi_*\|_{L^\infty(\mathbb R)}\|\phi_*\|_{L^2(\mathbb R)}^2+\|\phi_*\|_{L^2(\mathbb R)}^2 \right) < \infty.
\end{equation*}
Hence $E[u(t_*)]$ is finite. Since $u$ is a classical solution on $(0,T_{\max})$, by \eqref{GL_dissipation},
\begin{equation*}
E[u(t)]\le E[u(t_*)]\quad\text{for }t_*\le t<T_{\max}.
\end{equation*}
Moreover, using $F\ge0$, we obtain
\begin{equation*}
\frac12\|u_x(\cdot,t)\|_{L^2(\mathbb R)}^2\le E[u(t)]\le E[u(t_*)]\quad\text{for }t_*\le t<T_{\max}.
\end{equation*}
Therefore,
\begin{equation*}
\|\phi_x(\cdot,t)\|_{L^2(\mathbb R)} \le \|u_x(\cdot+\delta(t),t)\|_{L^2(\mathbb R)}+\|\bar u'\|_{L^2(\mathbb R)} \le \sqrt{2E[u(t_*)]}+\|\bar u'\|_{L^2(\mathbb R)}
\end{equation*}
for $t_*\le t<T_{\max}$. Together with the $L^2$-bound in \eqref{aprioriest}, this gives
\begin{equation*}
\sup_{t_*\le t<T_{\max}}\|\phi(\cdot,t)\|_{H^1(\mathbb R)}<\infty.
\end{equation*}
By the Sobolev embedding $H^1(\mathbb R)\hookrightarrow C^{1/2}(\mathbb R)$ and translation invariance of H\"older norms, $u(\cdot,t)$ remains uniformly bounded in $C^{1/2}(\mathbb R)$ as $t\uparrow T_{\max}$. The short-time existence result can therefore be reapplied at times arbitrarily close to $T_{\max}$, yielding a continuation of the solution beyond $T_{\max}$. This contradicts the maximality of $T_{\max}$, and hence $T_{\max}=\infty$. Finally, the regularity stated in \eqref{regularity} follows from the admissibility in Definition~\ref{def:admissible}.
\end{proof}

\subsection{Poincar\'e-type inequalities} \label{sec2.2}
Before proving Proposition~\ref{apriori}, we record two Poincar\'e-type inequalities and a useful consequence. These estimates will be used to obtain a coercive bound for the linearized energy form around the front. We define
\begin{equation} \label{qV}
q(y) := \kappa (y-u_-)(u_+-y), \quad V(y) := - \frac{B''(y)B(y)+B'''(y)q(y)^2}{2q(y)} \quad \text{for }y \in (u_-,u_+),
\end{equation}
where $B(\cdot)$ is as in \eqref{eq:B}, and $V$ is understood as its continuous extension to $[u_-,u_+]$.

\begin{proposition} \label{fourth}
There exists a constant $c_{\kappa,L}>0$ such that, for any $\Phi \in L^2(u_-,u_+)$ satisfying
\begin{equation*}
q^{1/2}\Phi_y \in L^2(u_-,u_+), \quad q^{3/2}\Phi_{yy} \in L^2(u_-,u_+),
\end{equation*}
we have
\begin{equation} \label{fourth1}
\begin{split}
& \int_{u_-}^{u_+} q(y)^3\Phi_{yy}(y)^2\,dy+\int_{u_-}^{u_+}V(y)\Phi(y)^2\,dy + \eta \left( \int_{u_-}^{u_+}\Phi(y)\,dy \right)^2 \\
& \qquad \geq c_{\kappa,L} \left( \int_{u_-}^{u_+} q(y) \Phi_y(y)^2 \, dy + \left( \int_{u_-}^{u_+} \Phi(y) \, dy \right)^2 \right),
\end{split}
\end{equation}
where $\eta = 4\kappa^3L$. Moreover, there exists a constant $C_{\kappa,L}>0$ such that
\begin{equation} \label{fourth2}
\int_{u_-}^{u_+}\Phi(y)^2\,dy \le C_{\kappa,L} \left( \int_{u_-}^{u_+}q(y)\Phi_y(y)^2\,dy + \left( \int_{u_-}^{u_+}\Phi(y)\,dy \right)^2 \right).
\end{equation}
\end{proposition}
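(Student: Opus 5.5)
The plan is to reduce \eqref{fourth1} to the normalized inequality announced in the introduction and then prove that inequality by a Legendre-polynomial expansion. First I would substitute $y=\frac{u_-+u_+}{2}+\frac L2 z$ and set $\Psi(z)=\Phi(y)$. Then
\[
q=\kappa\left(\tfrac L2\right)^2(1-z^2),\qquad dy=\tfrac L2\,dz,\qquad \Phi_y=\tfrac2L\Psi_z,\qquad \Phi_{yy}=\left(\tfrac2L\right)^2\Psi_{zz},
\]
and, from \eqref{eq:B}, $B=-\kappa^2 L\left(\frac L2\right)^2 z(1-z^2)$. A direct computation of $V$ from \eqref{qV} should give $V=c\,\kappa^3L^2(3z^2-1)$ for an explicit numerical constant $c$. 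Each term of \eqref{fourth1} then becomes a fixed multiple $\kappa^3(L/2)^3$ times the corresponding normalized term. With $\eta=4\kappa^3L$, the mean term also scales to exactly this power. So \eqref{fourth1} is equivalent, with $c_{\kappa,L}$ depending only on this scaling, to
\[
\tfrac18\int_{-1}^1(1-z^2)^3\Psi_{zz}^2+\tfrac34\int_{-1}^1(3z^2-1)\Psi^2+\Big(\int_{-1}^1\Psi\Big)^2\ \ge\ \tfrac{3}{56}\Big(\int_{-1}^1(1-z^2)\Psi_z^2+\Big(\int_{-1}^1\Psi\Big)^2\Big).
\]
By a density argument, approximating by polynomials in the weighted norms, it suffices to prove this for polynomials $\Psi$.

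Inequality \eqref{fourth2} is the easy part. Write $\Psi=\sum_n a_nP_n$. Then
\[
\int\Psi^2=\sum_n\frac{2a_n^2}{2n+1},\qquad \int(1-z^2)\Psi_z^2=\sum_n n(n+1)\frac{2a_n^2}{2n+1},\qquad \Big(\int\Psi\Big)^2=4a_0^2 ,
\]
so $\int\Psi^2\le\frac12\big(\int\Psi\big)^2+\frac12\int(1-z^2)\Psi_z^2$. Rescaling back gives \eqref{fourth2}.

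For \eqref{fourth1}, I would express every term in the coefficients $a_n$. The mean term is $4a_0^2$. The potential term uses $3z^2-1=2P_2$, and the product $P_2P_n$ expands via the Legendre linearization formula into $P_{n-2},P_n,P_{n+2}$, so it is a banded quadratic form coupling $a_n$ only with $a_{n\pm2}$. For the second-order term I would use the Legendre equation in the form $(1-z^2)\Psi_{zz}=2z\Psi_z-\sum n(n+1)a_nP_n$. Then
\[
\int(1-z^2)^3\Psi_{zz}^2=\int(1-z^2)\Big(2z\Psi_z-\sum n(n+1)a_nP_n\Big)^2 .
\]
Using $(1-z^2)P_n'=\frac{n(n+1)}{2n+1}(P_{n-1}-P_{n+1})$ together with the three-term recurrence for $zP_n$, this is also a banded form in the $a_n$ with explicit rational coefficients. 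Its diagonal entries grow like $n^3$. The same expansion should show that it vanishes on the translational direction $1-z^2\propto P_0-P_2$, and that mode is killed only by the mean term.

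The main obstacle is the positivity of the resulting infinite banded matrix $Q-\frac{3}{56}D$, where $D$ is the diagonal matrix of the right-hand side. The off-diagonal entries of the second-order part are of the same order $n^3$ as the diagonal, so plain Gershgorin dominance may fail. My first attempt would be to regroup the second-order form as a sum of squares. Rather than expanding termwise, I would write $(1-z^2)^{3/2}\Psi_{zz}$ in the orthonormal basis of $L^2$ with weight $(1-z^2)^{-1}$, or compare with the Gegenbauer $C^{5/2}_{n-2}$ expansion of $\Psi_{zz}$, whose natural weight is $(1-z^2)^2$, and use $\int(1-z^2)^3f^2\ge$ an explicit banded lower bound. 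I would then show that for $n\ge N$ the tail satisfies a diagonal-dominance estimate with room to spare, which beats the bounded potential because $\frac34|3z^2-1|\le\frac32$ and the $L^2$ term is controlled by \eqref{fourth2}. The remaining finitely many low modes, especially $n\le 3$, where the potential is negative and the translational mode lives, would be checked by an explicit finite-dimensional computation; the even and odd parities decouple. I would relegate the coefficient tables to an appendix.
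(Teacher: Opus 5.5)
Your scaling reduction is the paper's (with $Q[\Phi]=\kappa^3L^3Q_0[\Psi]$ and $\eta M[\Phi]^2=\kappa^3L^3M_0[\Psi]^2$), and your proof of \eqref{fourth2} is exactly Lemma~\ref{Poincare}.

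\textbf{The gap is in \eqref{fourth1}.} You correctly predict that $Q_0$ is a banded form coupling $a_n$ only with $a_{n\pm2}$, and you indicate a workable way to compute its entries. But you never compute them. The positivity that is the entire content of the proposition is left as a menu of strategies, none carried out.

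The strategy you sketch in most detail would fail as stated. There is no ``room to spare'' on the tail. In the row-wise (Gershgorin) margin of the second-order part, the $n^3$ and $n^2$ terms cancel exactly, leaving only $\frac38 n+O(1)$. The same holds for $Q_0$, since the potential's Legendre entries are $O(1/n)$. This margin is essentially $\frac38 g_n$, where $g_n=\frac{2n(n+1)}{2n+1}$ is the $J_0$-weight. Absorbing the potential via a pointwise bound and \eqref{fourth2} replaces its contribution by $-\frac34N_0\ge-\frac38J_0$ on mean-zero functions, and by twice that with your constant $\frac32$. This consumes the whole margin, so no positive multiple of $J_0$ survives. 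You would instead have to use the $O(1/n)$ size of the potential's entries directly.

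You also do not say how the couplings $a_{N-2}a_N$ and $a_{N-1}a_{N+1}$ across the cutoff are shared between the tail estimate and the finite check. Two smaller points. First, it is the full $Q_0$, not the second-order term, that vanishes on $1-z^2$: the second-order term equals $\frac{16}{35}$ there and is cancelled by the potential. Second, density of polynomials in the degenerate weighted space needs an argument; the paper gives one via $z=\tanh s$.

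\textbf{What the paper does.} Once the exact coefficients are in hand, no regrouping is needed. The paper obtains $Q_0=\sum d_na_n^2+2\sum e_na_na_{n+2}$, with $d_n,e_n$ as in \eqref{dnen}, from $\mathcal Lf=((1-z^2)^3f'')''$. This operator is symmetric on polynomials, preserves parity and raises degree by at most two, so it maps $P_n$ into $\mathrm{span}\{P_{n-2},P_n,P_{n+2}\}$. The leading coefficient fixes the $P_{n+2}$ entry, symmetry fixes the $P_{n-2}$ entry, and $\mathcal LP_n(1)=0$ fixes the diagonal.

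The paper then splits off the mean. For $\widetilde\Psi=\Psi-a_0$ one has $J_0[\widetilde\Psi]=J_0[\Psi]$ and $Q_0[\Psi]=Q_0[\widetilde\Psi]+\frac65a_0a_2$. Young's inequality with $a_2^2\le\frac5{12}J_0$ bounds this cross term by $\frac3{56}J_0+\frac7{10}M_0^2$.

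For mean-zero $\Psi$, plain row-wise domination $2e_na_na_{n+2}\ge-|e_n|(a_n^2+a_{n+2}^2)$ already suffices, contrary to your expectation. One gets $m_n\ge\frac3{28}g_n$ for every $n\ge1$, where $m_1=d_1-|e_1|$, $m_2=d_2-|e_2|$, and $m_n=d_n-|e_{n-2}|-|e_n|$ otherwise. Equality holds at $n=1$, where both $d_1$ and $e_1$ come entirely from the potential. For $n\ge4$ the inequality reduces to positivity of an explicit sextic. The result is $Q_0+M_0^2\ge\frac3{56}J_0+\frac3{10}M_0^2$.

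Without these exact values, including the potential's contribution at small $n$, your proposal does not establish any positive constant in \eqref{fourth1}.
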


\begin{corollary} \label{fourth-cor}
For $\Phi$ as in Proposition~\ref{fourth}, there exists a constant $\widetilde{C}_{\kappa,L}>0$ such that
\begin{equation} \label{fourth3}
\begin{split}
& \int_{u_-}^{u_+}q(y)\Phi_y(y)^2\,dy+\int_{u_-}^{u_+}q(y)^3\Phi_{yy}(y)^2 \,dy  \\
& \qquad \leq \widetilde{C}_{\kappa,L} \left( \int_{u_-}^{u_+} q(y)^3\Phi_{yy}(y)^2\,dy+\int_{u_-}^{u_+}V(y)\Phi(y)^2\,dy+ \eta \left( \int_{u_-}^{u_+}\Phi(y)\,dy \right)^2 \right).
\end{split}
\end{equation}
\end{corollary}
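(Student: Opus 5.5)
The corollary follows by combining the two inequalities of Proposition~\ref{fourth}; no new analysis is needed. Write
\[
\mathcal Q[\Phi]:=\int_{u_-}^{u_+} q^3\Phi_{yy}^2\,dy+\int_{u_-}^{u_+}V\Phi^2\,dy+\eta\Big(\int_{u_-}^{u_+}\Phi\,dy\Big)^2 .
\]
The left-hand side of \eqref{fourth3} has two pieces, $\int q\Phi_y^2$ and $\int q^3\Phi_{yy}^2$, and I bound each by a multiple of $\mathcal Q[\Phi]$.

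\emph{First piece.} By \eqref{fourth1},
\[
\int_{u_-}^{u_+} q\Phi_y^2\,dy\le \int_{u_-}^{u_+} q\Phi_y^2\,dy+\Big(\int_{u_-}^{u_+}\Phi\,dy\Big)^2\le c_{\kappa,L}^{-1}\,\mathcal Q[\Phi].
\]

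\emph{Second piece.} Isolate the second-order term: $\int q^3\Phi_{yy}^2=\mathcal Q[\Phi]-\int V\Phi^2-\eta(\int\Phi)^2$.
\begin{itemize}
\item Drop the nonpositive term $-\eta(\int\Phi)^2$.
\item $V$ is continuous on the compact interval $[u_-,u_+]$, being the continuous extension of a rational function whose singularities at the endpoints are removable. So $\|V\|_{L^\infty}<\infty$ and $-\int V\Phi^2\le \|V\|_{L^\infty}\int\Phi^2$.
\item By \eqref{fourth2} followed by \eqref{fourth1},
\[
\int_{u_-}^{u_+}\Phi^2\,dy\le C_{\kappa,L}\Big(\int_{u_-}^{u_+} q\Phi_y^2\,dy+\Big(\int_{u_-}^{u_+}\Phi\,dy\Big)^2\Big)\le C_{\kappa,L}c_{\kappa,L}^{-1}\,\mathcal Q[\Phi].
\]
\end{itemize}
Together these give
\[
\int_{u_-}^{u_+} q^3\Phi_{yy}^2\,dy\le\big(1+\|V\|_{L^\infty}C_{\kappa,L}c_{\kappa,L}^{-1}\big)\,\mathcal Q[\Phi].
\]

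\emph{Conclusion.} Adding the two bounds yields \eqref{fourth3} with
\[
\widetilde C_{\kappa,L}=c_{\kappa,L}^{-1}+1+\|V\|_{L^\infty}C_{\kappa,L}c_{\kappa,L}^{-1}.
\]
All integrals are finite under the hypotheses of Proposition~\ref{fourth}: $\Phi\in L^2$, and $V$ is bounded. Hence the rearrangements above are legitimate.

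The only point needing care is that $V$ is bounded on the closed interval. To confirm this, compute $V$ explicitly: $B$ is cubic with roots $u_\pm$ and the midpoint, and $q$ vanishes at $u_\pm$. Then $B''B$ vanishes at $u_\pm$ because $B$ does, and $B'''q^2$ vanishes there because $q$ does. So the numerator is divisible by $(y-u_-)(u_+-y)$ and $V$ is a polynomial, hence bounded. This step is routine; the substantive work is entirely in Proposition~\ref{fourth}.
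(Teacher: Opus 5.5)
Your proposal is correct and follows essentially the same route as the paper's proof: you bound $\int q\Phi_y^2$ directly by \eqref{fourth1}, and you bound $\int q^3\Phi_{yy}^2$ by writing it as $Q[\Phi]-\int V\Phi^2$, controlling $-\int V\Phi^2$ by boundedness of $V$ together with \eqref{fourth2} and \eqref{fourth1}. This yields the same constant $c_{\kappa,L}^{-1}+1+C_V C_{\kappa,L}c_{\kappa,L}^{-1}$. The only differences are cosmetic: the paper uses just the lower bound $V\ge -C_V$ and takes boundedness from the explicit formula $V=\frac32\kappa^3L^2(3z^2-1)$, whereas you use $\|V\|_{L^\infty}$ and verify polynomiality by divisibility of the numerator by $q$.
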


The proofs of Proposition~\ref{fourth} and Corollary~\ref{fourth-cor} are deferred to Section~\ref{sec4}.

\subsection{Proof of Proposition~\ref{apriori}} \label{sec2.3}

We prove Proposition~\ref{apriori} by deriving an energy identity, estimating the linear part via the Poincar\'e-type inequalities above, and controlling the nonlinear part using the smallness assumption. We carry out the estimates for smooth perturbations generated by compactly supported initial perturbations. The a priori estimate \eqref{aprioriest} for admissible perturbations then holds by a density argument; see Appendix~\ref{app:smooth-approx}.

\subsubsection{Energy identity} 
Multiplying \eqref{eq:pert} by \(\phi\) and integrating over $\mathbb R$, we obtain
\begin{equation*}
\begin{split}
\frac{1}{2}\frac{d}{dt}\int_{\mathbb R}\phi^2\,dx
&= \int_{\mathbb R}\phi\left(-\phi_{xx}+B'(\bar{u})\phi\right)_{xx}\,dx  + \int_{\mathbb R}\phi\left(B(\bar{u}+\phi)-B(\bar{u})-B'(\bar{u})\phi\right)_{xx}\,dx \\
&\quad + \dot{\delta}(t)\int_{\mathbb R}\phi(\bar{u}'+\phi_x)\,dx.
\end{split}
\end{equation*}
Integrating by parts, we compute the first term on the right-hand side as
\begin{equation*}
\begin{split}
\int_{\mathbb R}\phi\left(-\phi_{xx}+B'(\bar{u})\phi\right)_{xx}\,dx
&= -\int_{\mathbb R}\left(\phi_{xx}^2+b(\bar{u})\phi_x^2-\frac{1}{2}(b(\bar{u}))_{xx}\phi^2\right)\,dx,
\end{split}
\end{equation*}
where $b(\bar{u}):=B'(\bar{u})$. Therefore, setting $\delta(t)$ as in \eqref{delta}, we have
\begin{equation}\label{E_identity}
\frac{1}{2}\frac{d}{dt} \int_{\mathbb R}\phi^2\,dx = -\mathcal{Q}(t) +\mathcal{N}(t),
\end{equation}
where
\begin{equation*}
\begin{split}
\mathcal{Q}(t)& :=\int_{\mathbb R}\left(\phi_{xx}^2+b(\bar{u})\phi_x^2-\frac{1}{2}(b(\bar{u}))_{xx}\phi^2\right)\,dx + \eta \left( \int_\mathbb{R} \bar{u}'\phi \, dx \right)^2, \\
\mathcal{N}(t) & :=\int_{\mathbb R}\phi\left(B(\bar{u}+\phi)-B(\bar{u})-b(\bar{u})\phi\right)_{xx}\,dx.
\end{split}
\end{equation*}
Now we estimate $\mathcal{Q}(t)$ and $\mathcal{N}(t)$ separately.

\subsubsection{Estimate of the linear part} \label{sec2.3.2}
We first estimate $\mathcal{Q}(t)$. Let \(y=\bar{u}(x)\) and \(\Phi(y,t)=\phi(x,t)\). Using \eqref{eq:ubar} and the relations
\begin{equation*}
\phi_x=q(y)\Phi_y,\quad \phi_{xx}=q(y)q_y(y)\Phi_y+q(y)^2\Phi_{yy}=B(y)\Phi_y+q(y)^2\Phi_{yy},
\end{equation*}
we obtain
\begin{equation*}
\mathcal{Q}(t)=\int_{u_-}^{u_+}\left( q^3\Phi_{yy}^2+\left(\frac{B^2}{q}+B'q\right)\Phi_y^2+2Bq\Phi_y\Phi_{yy}+V\Phi^2\right)\,dy+\eta\left(\int_{u_-}^{u_+}\Phi\,dy\right)^2.
\end{equation*}
By integration by parts,
\begin{equation*}
\int_{u_-}^{u_+}2Bq\Phi_y\Phi_{yy}\,dy=[Bq\Phi_y^2]_{u_-}^{u_+}-\int_{u_-}^{u_+}(Bq)'\Phi_y^2\,dy.
\end{equation*}
Here, the endpoint term vanishes for the smooth approximations under consideration; see Appendix~\ref{app:front-coordinate}. Since $(Bq)'=B'q+\frac{B^2}{q}$, we arrive at
\begin{equation*}
\mathcal{Q}(t)=\int_{u_-}^{u_+}q(y)^3\Phi_{yy}(y,t)^2\,dy+\int_{u_-}^{u_+}V(y)\Phi(y,t)^2\,dy+\eta\left(\int_{u_-}^{u_+}\Phi(y,t)\,dy\right)^2,
\end{equation*}
where we also used
\begin{equation*}
b(\bar{u})=B'(y),\quad (b(\bar{u}))_{xx}=B''(y)B(y)+B'''(y)q(y)^2.
\end{equation*}
The corresponding identity in the normalized weighted space $X$ follows from the polynomial density result in Appendix~\ref{density}.

By Proposition~\ref{fourth}, there exist constants $c_{\kappa,L}>0$ and $\tilde{c}_{\kappa,L}>0$ such that
\begin{equation*}
\begin{split}
\mathcal{Q}(t) & \geq c_{\kappa,L} \left( \int_{u_-}^{u_+} q(y) \Phi_y(y,t)^2 \, dy +  \left( \int_{u_-}^{u_+} \Phi(y,t) \, dy \right)^2 \right) \\
&  \geq \tilde{c}_{\kappa,L} \int_{u_-}^{u_+} \Phi(y,t)^2 \, dy.
\end{split}
\end{equation*}
Thus, there exists $C>0$ such that
\begin{equation} \label{coerc1}
\lVert \phi_x(\cdot,t) \rVert_{L^2}^2, \ |\dot{\delta}(t)|^2, \ \lVert (\sqrt{\bar{u}'}\phi)(\cdot,t) \rVert_{L^2}^2 \leq C \mathcal{Q}(t).
\end{equation}
Moreover, since
\begin{equation*}
\phi_{xx}=q(y)q_y(y)\Phi_y+q(y)^2\Phi_{yy}=B(y)\Phi_y+q(y)^2\Phi_{yy},
\end{equation*}
we have
\begin{equation} \label{coerc2}
\begin{split}
\lVert \phi_{xx}(\cdot,t)\rVert_{L^2}^2
&= \int_{u_-}^{u_+}\frac{\left(B(y)\Phi_y(y,t)+q(y)^2\Phi_{yy}(y,t)\right)^2}{q(y)}\,dy \\
&\leq C\left(\int_{u_-}^{u_+}q(y)\Phi_y^2(y,t)\,dy+\int_{u_-}^{u_+}q(y)^3\Phi_{yy}^2(y,t)\,dy\right) \\
&\leq C\mathcal{Q}(t).
\end{split}
\end{equation}
Here we used $B=qq_y$, the boundedness of $q_y$, and \eqref{fourth3}.

Collecting \eqref{coerc1} and \eqref{coerc2}, we obtain
\begin{equation}\label{claim}
\mathcal{Q}(t) \geq c\left(\lVert \phi_x(\cdot,t)\rVert_{H^1(\mathbb R)}^2+\lVert (\sqrt{\bar{u}'}\phi)(\cdot,t) \rVert_{L^2(\mathbb{R})}^2 + |\dot{\delta}(t)|^2\right)
\end{equation}
for some $c>0$.

\subsubsection{Estimate of the nonlinear part}
We next estimate the nonlinear term
\begin{equation*}
\mathcal{N}(t) =\int_{\mathbb R}\phi\left(B(\bar{u}+\phi)-B(\bar{u})-b(\bar{u})\phi\right)_{xx}\,dx.
\end{equation*}
Since \(B\) is cubic, we have
\begin{equation*}
B(\bar{u}+\phi)-B(\bar{u})-b(\bar{u})\phi=\frac{1}{2}B''(\bar{u})\phi^2+\frac{1}{6}B'''\phi^3,
\end{equation*}
where $B''' = 12\kappa^2$. Therefore, integrating by parts,
\begin{equation*}
\begin{split}
\mathcal{N}(t)
&= -\int_{\mathbb R}\phi_x\left(\frac{1}{2}B''' \bar{u}'\phi^2+B''(\bar{u})\phi\phi_x+\frac{1}{2}B'''\phi^2\phi_x\right)\,dx \\
&= -\frac{1}{2}B'''\int_{\mathbb R}\bar{u}'\phi^2\phi_x\,dx-\int_{\mathbb R}B''(\bar{u})\phi\phi_x^2\,dx-\frac{1}{2}B'''\int_{\mathbb R}\phi^2\phi_x^2\,dx.
\end{split}
\end{equation*}
Since the last term is non-positive, we have
\begin{equation} \label{Nt}
\mathcal{N}(t)\leq \left|-\frac{1}{2}B'''\int_{\mathbb R}\bar{u}'\phi^2\phi_x\,dx\right|+\left|\int_{\mathbb R}B''(\bar{u})\phi\phi_x^2\,dx\right|.
\end{equation}

For the first term on the right-hand side, integration by parts gives
\begin{equation*}
-\frac{1}{2}B'''\int_{\mathbb R}\bar{u}'\phi^2\phi_x\,dx=\frac{1}{6}B'''\int_{\mathbb R}\bar{u}''\phi^3\,dx.
\end{equation*}
Since \(\bar{u}'=q(\bar{u})\) and \(\bar{u}''=q(\bar{u})q_y(\bar{u})\), we have \(|\bar{u}''|\leq C\bar{u}'\), and hence
\begin{equation*}
\left|-\frac{1}{2}B'''\int_{\mathbb R}\bar{u}'\phi^2\phi_x\,dx\right|\leq C\int_{\mathbb R}\bar{u}'|\phi|^3\,dx.
\end{equation*}
Applying H\"older's inequality and the one-dimensional Sobolev inequality, we have
\begin{equation*}
\begin{split}
\int_{\mathbb R}\bar{u}'|\phi|^3\,dx
&= \int_{\mathbb R}(\sqrt{\bar{u}'}\phi)^2|\phi|\,dx \\
&\leq \|\phi(\cdot,t)\|_{L^2}\|(\sqrt{\bar{u}'}\phi)(\cdot,t)\|_{L^4}^2 \\
&\leq C\|\phi(\cdot,t)\|_{L^2}\|(\sqrt{\bar{u}'}\phi)(\cdot,t)\|_{H^1}^2.
\end{split}
\end{equation*}
Moreover, since \(|\bar{u}''|\leq C\bar{u}'\), we have
\begin{equation*}
\|(\sqrt{\bar{u}'}\phi)(\cdot,t)\|_{H^1}^2\leq C\left(\int_{\mathbb R}\bar{u}'\phi^2\,dx+\int_{\mathbb R}\bar{u}'\phi_x^2\,dx\right) \leq C \left( \lVert (\sqrt{\bar{u}'}\phi)(\cdot,t) \rVert_{L^2}^2 + \lVert \phi_x(\cdot,t) \rVert_{L^2}^2 \right).
\end{equation*}
Thus,
\begin{equation} \label{Ntfirst}
\left|-\frac{1}{2}B'''\int_{\mathbb R}\bar{u}'\phi^2\phi_x\,dx\right|\leq C\|\phi(\cdot,t)\|_{L^2} \left( \lVert (\sqrt{\bar{u}'}\phi)(\cdot,t) \rVert_{L^2}^2 + \lVert \phi_x(\cdot,t) \rVert_{L^2}^2 \right).
\end{equation}

For the second term, since \(B''(\bar{u})\) is bounded, we obtain
\begin{equation*}
\left|\int_{\mathbb R}B''(\bar{u})\phi\phi_x^2\,dx\right|\leq C\int_{\mathbb R}|\phi|\phi_x^2\,dx.
\end{equation*}
By H\"older's inequality and the one-dimensional Gagliardo--Nirenberg inequality, the right-hand side is estimated as
\begin{equation} \label{Ntsecond}
\begin{split}
\int_{\mathbb R}|\phi|\phi_x^2\,dx \leq \|\phi\|_{L^2}\|\phi_x\|_{L^4}^2  \leq C\|\phi\|_{L^2}\|\phi_x\|_{L^2}^{3/2}\|\phi_{xx}\|_{L^2}^{1/2}  \leq C\|\phi\|_{L^2}\|\phi_x\|_{H^1}^2.
\end{split}
\end{equation}

Since $\lVert \phi(\cdot,t) \rVert_{L^2} \leq \varepsilon_1$, combining \eqref{Nt}--\eqref{Ntsecond}, we obtain
\begin{equation} \label{Ntbound}
\mathcal{N}(t) \leq C \varepsilon_1 \left( \lVert \phi_x \rVert_{H^1}^2 + \lVert (\sqrt{\bar{u}'}\phi) \rVert_{L^2}^2 \right).
\end{equation}

\subsubsection{Completion of the proof}
Using the estimates \eqref{claim} and \eqref{Ntbound} in \eqref{E_identity}, we obtain
\begin{equation*}
\begin{split}
\frac{1}{2}\frac{d}{dt} \int_\mathbb{R} \phi^2 \, dx + c \left( \lVert \phi_x(\cdot,t) \rVert_{H^1}^2 + \lVert (\sqrt{\bar{u}'}\phi)(\cdot,t) \rVert_{L^2}^2 + |\dot{\delta}(t)|^2 \right) \leq C \varepsilon_1 \left( \lVert \phi_x \rVert_{H^1}^2 + \lVert (\sqrt{\bar{u}'}\phi) \rVert_{L^2}^2 \right)
\end{split}
\end{equation*}
for some $c>0$ and $C>0$. Integrating with respect to $t$, we obtain
\begin{equation*}
\begin{split}
\lVert \phi(\cdot,t) \rVert_{L^2}^2 + c \int_0^t \left( \lVert \phi_x(\cdot,s) \rVert_{H^1}^2 + \lVert (\sqrt{\bar{u}'}\phi)(\cdot,s) \rVert_{L^2}^2 + |\dot{\delta}(s)|^2 \right) \, ds \leq \lVert \phi(\cdot,0) \rVert_{L^2}^2
\end{split}
\end{equation*}
for sufficiently small $\varepsilon_1>0$.

\section{\texorpdfstring{From $L^2$-contraction to $L^2$-asymptotic orbital stability}{From L2-contraction to L2-asymptotic orbital stability}} \label{sec3}
We combine the \(L^2\)-contraction estimate from Section~\ref{sec2} with a far-field semigroup argument to prove the \(L^2\)-asymptotic orbital stability of the front \(\bar u\), namely \eqref{L2stability}--\eqref{deltalimit} in Theorem~\ref{Main}.

Recall the perturbation equation with the shift chosen as in \eqref{delta}:
\begin{equation} \label{eq:pert'}
\phi_t = \partial_x^2 (- \partial_x^2\phi + B'(\bar{u})\phi) + \partial_x^2 N(\phi) - \eta \left( \int_\mathbb{R} \bar{u}' \phi \, dx \right) (\bar{u}' + \partial_x \phi),
\end{equation}
where
\begin{equation*}
N(\phi) := \frac{1}{2} B''(\bar{u})\phi^2 + \frac{B'''}{6}\phi^3.
\end{equation*}
We define operators $L_\infty$, $\mathcal{P}$, and $\mathcal{A}_\infty$ by
\begin{equation}\label{LP_def}
L_\infty \phi := \partial_x^2 (-\partial_x^2 \phi + \kappa^2L^2 \phi), \quad \mathcal{P}\phi := \bar{u}' \int_{\mathbb{R}} \bar{u}' \phi \, dx,
\end{equation}
and
\begin{equation} \label{A_def}
\mathcal{A}_\infty \phi := L_\infty \phi - \eta \mathcal{P} \phi.
\end{equation}
Then the perturbation equation \eqref{eq:pert'} can be written as
\begin{equation} \label{eq:far}
\phi_t = \mathcal{A}_\infty \phi + \partial_x \mathcal{F}(\phi),
\end{equation}
where
\begin{equation} \label{Fdef}
\mathcal{F}(\phi):= - 6\kappa \bar{u}' \partial_x \phi - 6\kappa \bar{u}'' \phi + \partial_x N(\phi) - \eta \phi \int_{\mathbb{R}} \bar{u}' \phi \, dx.
\end{equation}
Here we used the identity \(B'(\bar{u}) = \kappa^2L^2 - 6\kappa \bar{u}'\), which follows from \eqref{eq:B} and \eqref{eq:ubar}:
\begin{equation*}
\begin{split}
B'(\bar{u}) & = \kappa^2\left[(L-2(\bar{u}-u_-))^2 - 2(\bar{u}-u_-)(L-(\bar{u}-u_-))\right] \\
& = \kappa^2\left[L^2-6(\bar{u}-u_-)(u_+-\bar{u})\right] \\
& = \kappa^2L^2 - 6\kappa \bar{u}'.
\end{split}
\end{equation*}
We begin with the basic semigroup property of the damped far-field operator $\mathcal A_\infty$.

\begin{lemma} \label{semi}
The operator $\mathcal{A}_\infty$ generates a $C_0$ contraction semigroup $e^{t\mathcal{A}_\infty}$ on $L^2(\mathbb{R})$.
\end{lemma}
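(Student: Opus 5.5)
The plan is to apply the Lumer--Phillips theorem together with bounded perturbation theory. Take the domain $D(\mathcal A_\infty)=H^4(\mathbb R)$. First consider $L_\infty$ alone. Under the Fourier transform it becomes the multiplier
\begin{equation*}
\widehat{L_\infty\phi}(\xi)=-\xi^2(\xi^2+\kappa^2L^2)\,\hat\phi(\xi)=:m(\xi)\hat\phi(\xi).
\end{equation*}
The symbol $m$ is real and satisfies $m(\xi)\le 0$. Hence $L_\infty$ is self-adjoint and nonpositive on $L^2(\mathbb R)$ with domain $H^4(\mathbb R)$. It therefore generates the contraction semigroup $e^{tL_\infty}$, given explicitly by multiplication by $e^{tm(\xi)}$, which has modulus at most $1$; strong continuity follows from dominated convergence.

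Next handle the rank-one term $\eta\mathcal P$. Because $\bar u'\in L^2(\mathbb R)$, the operator $\mathcal P\phi=\bar u'\langle\bar u',\phi\rangle$ is bounded on $L^2$ with norm $\|\bar u'\|_{L^2}^2$. It is also symmetric and nonnegative:
\begin{equation*}
\langle \mathcal P\phi,\phi\rangle=\Big(\int_{\mathbb R}\bar u'\phi\,dx\Big)^2\ge0.
\end{equation*}
By the bounded perturbation theorem, $\mathcal A_\infty=L_\infty-\eta\mathcal P$ with domain $H^4$ generates a $C_0$ semigroup. To see that this semigroup is contractive, note that $\mathcal A_\infty$ is dissipative:
\begin{equation*}
\langle\mathcal A_\infty\phi,\phi\rangle=-\|\phi_{xx}\|_{L^2}^2-\kappa^2L^2\|\phi_x\|_{L^2}^2-\eta\Big(\int_{\mathbb R}\bar u'\phi\,dx\Big)^2\le0
\quad\text{for }\phi\in H^4 .
\end{equation*}
We also need the range condition. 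The operator $\mathcal A_\infty$ is self-adjoint, being a self-adjoint operator plus a bounded symmetric one, and it is nonpositive. Consequently $\lambda-\mathcal A_\infty$ is surjective for every $\lambda>0$. Lumer--Phillips then gives a $C_0$ contraction semigroup.

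Alternatively, one can check the range condition by hand. Use the Sherman--Morrison formula for the rank-one perturbation of $(\lambda-L_\infty)^{-1}$:
\begin{equation*}
(\lambda-\mathcal A_\infty)^{-1}f=R f-\frac{\eta\,\langle \bar u',Rf\rangle}{1+\eta\langle\bar u',R\bar u'\rangle}\,R\bar u',\qquad R:=(\lambda-L_\infty)^{-1}.
\end{equation*}
The denominator is at least $1$ because $R$ is a positive operator.

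There is no real obstacle here. The only points requiring care are identifying the correct domain and checking the symmetry and boundedness of $\mathcal P$, both of which follow immediately from the exponential decay of $\bar u'$.
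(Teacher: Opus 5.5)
Your proposal is correct and follows essentially the same route as the paper: you show that $\mathcal A_\infty$ is dissipative on $H^4(\mathbb R)$, and that $L_\infty-\eta\mathcal P$ is self-adjoint as a self-adjoint operator plus a bounded symmetric rank-one term (hence maximal dissipative), and then you invoke Lumer--Phillips. Your Fourier-symbol verification and Sherman--Morrison resolvent formula only add explicit detail, and the bounded perturbation step is redundant once Lumer--Phillips is applied.
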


\begin{proof}
Let \(f\in H^4(\mathbb R)\). Using the definitions of \(L_\infty\), \(\mathcal P\), and \(\mathcal A_\infty\) in \eqref{LP_def}--\eqref{A_def}, together with integration by parts, we obtain
\begin{equation*}
\langle f,\mathcal A_\infty f\rangle = - \|f_{xx}\|_{L^2}^2-\kappa^2L^2\|f_x\|_{L^2}^2-\eta|\langle f,\bar u'\rangle|^2\le0,
\end{equation*}
where \(\langle\cdot,\cdot\rangle\) denotes the \(L^2\)-inner product. Hence \(\mathcal A_\infty\) is dissipative.

We next show that \(\mathcal A_\infty\) is maximal dissipative. The operator \(L_\infty\) is a real constant-coefficient self-adjoint operator on \(H^4(\mathbb R)\), and \(\mathcal P\) is bounded and self-adjoint on \(L^2(\mathbb R)\). Therefore \(\mathcal A_\infty=L_\infty-\eta\mathcal P\), with domain \(H^4(\mathbb R)\), is self-adjoint. Since it is also dissipative, it is maximal dissipative. By the Lumer--Phillips theorem, \(\mathcal A_\infty\) generates a \(C_0\) contraction semigroup \(e^{t\mathcal A_\infty}\) on \(L^2(\mathbb R)\).
\end{proof}

We apply Duhamel's principle to \eqref{eq:far} and obtain
\begin{equation*}
\phi(\cdot,t) = e^{(t-T)\mathcal A_\infty} \phi(\cdot,T) + \int_T^t e^{(t-s)\mathcal A_\infty} \partial_x \mathcal{F}(\phi(\cdot,s)) \, ds, \quad t \geq T
\end{equation*}
for any \(T>0\), where \(e^{t\mathcal A_\infty}\) denotes the semigroup generated by \(\mathcal A_\infty\) in Lemma~\ref{semi}. This Duhamel representation is understood as the \(L^2\)-valued limit of the corresponding formula for smooth approximating sources; this is justified in Appendix~\ref{app:duhamel-justification}. Taking the \(L^2\) norm and using the triangle inequality, we have
\begin{equation*}
\lVert \phi(\cdot,t) \rVert_{L^2} \leq \lVert e^{(t-T)\mathcal A_\infty} \phi(\cdot,T) \rVert_{L^2} + \left\lVert \int_T^t e^{(t-s)\mathcal A_\infty} \partial_x \mathcal{F}(\phi(\cdot,s)) \, ds \right\rVert_{L^2}.
\end{equation*}
Thus the proof of \eqref{L2stability} can be reduced to showing that
\begin{equation} \label{S_decay}
\lim_{t \to \infty}{\lVert e^{(t-T)\mathcal A_\infty} \phi(\cdot,T) \rVert_{L^2}} = 0 \quad \text{for each fixed } T > 0,
\end{equation}
and
\begin{equation} \label{F_decay}
\lim_{T\to\infty}
\sup_{t\ge T} \left\lVert \int_T^t e^{(t-s)\mathcal A_\infty} \partial_x \mathcal{F}(\phi(\cdot,s)) \, ds \right\rVert_{L^2}=0.
\end{equation}
Once \eqref{L2stability} is established, \eqref{deltalimit} follows directly from the definition of $\dot\delta(t)$:
\begin{equation*}
|\dot\delta(t)|=\eta \left| \int_\mathbb{R} \bar{u}'\phi \, dx \right| \le \eta\| \bar{u}' \|_{L^2}\| \phi(\cdot,t) \|_{L^2} \to 0 \quad \text{as } t\to\infty.
\end{equation*}

The remainder of this section is devoted to proving \eqref{S_decay} and \eqref{F_decay}, thereby completing the proof of both \eqref{L2stability} and \eqref{deltalimit}.

\subsection{\texorpdfstring{Proofs of \eqref{S_decay} and \eqref{F_decay}}{Proofs of (3.3) and (3.4)}}

\subsubsection{Proof of \eqref{S_decay}}
The following lemma implies \eqref{S_decay}.

\begin{lemma} \label{A_decay}
Let \(e^{t\mathcal A_\infty}\) be the contraction semigroup generated by \(\mathcal A_\infty\) in Lemma~\ref{semi}. Then, for any $f\in L^2(\mathbb{R})$, we have
\begin{equation} \label{eq:strong-stability-asymp}
\lim_{t\to\infty}\|e^{t\mathcal A_\infty}f\|_{L^2(\mathbb{R})}=0.
\end{equation}
\end{lemma}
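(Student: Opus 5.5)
Since $\mathcal A_\infty$ is self-adjoint and nonpositive (Lemma~\ref{semi}), I will use the spectral theorem. Let $E(\cdot)$ be the spectral measure of $-\mathcal A_\infty\ge0$. Then
\begin{equation*}
\|e^{t\mathcal A_\infty}f\|_{L^2}^2=\int_{[0,\infty)}e^{-2t\lambda}\,d\langle E(\lambda)f,f\rangle .
\end{equation*}
By dominated convergence this tends to $\langle E(\{0\})f,f\rangle$ as $t\to\infty$. So \eqref{eq:strong-stability-asymp} holds for every $f$ if and only if $0$ is not an eigenvalue of $\mathcal A_\infty$, that is, $\ker\mathcal A_\infty=\{0\}$ in $L^2(\mathbb R)$. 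The whole proof reduces to this kernel statement.

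To rule out a kernel, take $f\in H^4(\mathbb R)$ with $\mathcal A_\infty f=0$. The dissipation identity from the proof of Lemma~\ref{semi} gives
\begin{equation*}
0=-\langle f,\mathcal A_\infty f\rangle=\|f_{xx}\|_{L^2}^2+\kappa^2L^2\|f_x\|_{L^2}^2+\eta|\langle f,\bar u'\rangle|^2 .
\end{equation*}
Hence $f_x=0$ in $L^2$, so $f$ is constant. Since $f\in L^2(\mathbb R)$, we get $f=0$.

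An equivalent route avoids the spectral measure. The same identity shows $\ker\mathcal A_\infty=\{0\}$, and therefore $\overline{\operatorname{Ran}\mathcal A_\infty}=(\ker\mathcal A_\infty)^\perp=L^2$ by self-adjointness. For $f=\mathcal A_\infty g$ with $g\in H^4$, we have $e^{t\mathcal A_\infty}f=\mathcal A_\infty e^{t\mathcal A_\infty}g$. The spectral bound $\sup_{\lambda\ge0}\lambda e^{-t\lambda}\le (et)^{-1}$ then gives decay like $t^{-1}$. Uniform boundedness of the contraction semigroup extends the decay to all $f$ by density.

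The only delicate point is the justification that the spectral projection onto $\{0\}$ is exactly the orthogonal projection onto $\ker\mathcal A_\infty$. For a self-adjoint operator this is standard. I note that no spectral gap is needed: the essential spectrum of $L_\infty$ reaches $0$, which is why the conclusion is only strong stability, without a rate uniform in $f$.
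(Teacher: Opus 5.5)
Your proof is correct. Its core step, ruling out $0$ as an eigenvalue via $-\langle f,\mathcal A_\infty f\rangle=\|f_{xx}\|_{L^2}^2+\kappa^2L^2\|f_x\|_{L^2}^2+\eta|\langle f,\bar u'\rangle|^2$, is exactly the paper's. The difference is the abstract tool that turns the kernel statement into strong stability.

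The paper invokes the Arendt--Batty--Lyubich--V\~u theorem. It checks that $\sigma(\mathcal A_\infty)\cap i\mathbb R\subset\{0\}$ is countable, and that $\mathcal A_\infty^*=\mathcal A_\infty$ has no purely imaginary eigenvalue. You instead use the spectral theorem for the nonnegative self-adjoint operator $-\mathcal A_\infty$, together with dominated convergence, to get $\lim_{t\to\infty}\|e^{t\mathcal A_\infty}f\|_{L^2}^2=\|E(\{0\})f\|_{L^2}^2$.

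Your route is more elementary and self-contained, and it gives slightly more. It shows that the kernel condition is also necessary. Your range argument also yields an explicit $O(t^{-1})$ rate on the dense subspace $\operatorname{Ran}\mathcal A_\infty$, and your remark correctly explains why no rate can hold uniformly in $f$.

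The ABLV framework, in contrast, needs only a bounded semigroup. It would therefore carry over to non-symmetric modifications of the far-field operator, where the spectral theorem is unavailable, provided the boundary spectrum could still be controlled. In the present self-adjoint setting, its hypotheses reduce to precisely the kernel condition you verify, so either route suffices.
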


\begin{proof}
By the Arendt--Batty--Lyubich--V\~u theorem for strong stability of bounded \(C_0\) semigroups \cite{AB, LP}, it suffices to verify that \(\sigma(\mathcal A_\infty)\cap i\mathbb R\) is countable and that \(\sigma_p(\mathcal A_\infty^*)\cap i\mathbb R=\emptyset\). Here \(\mathcal A_\infty^*\) denotes the adjoint of \(\mathcal A_\infty\), while \(\sigma(\cdot)\) and \(\sigma_p(\cdot)\) denote the spectrum and the point spectrum, respectively.

From the proof of Lemma~\ref{semi}, \(\mathcal A_\infty\) is self-adjoint and non-positive on \(L^2(\mathbb R)\), with domain \(H^4(\mathbb R)\). Hence its spectrum is contained in the non-positive real axis:
\begin{equation*}
\sigma(\mathcal A_\infty)\subset\{z\in\mathbb C:\operatorname{Im}z=0,\ \operatorname{Re}z\le0\}.
\end{equation*}
In particular, \(\sigma(\mathcal A_\infty)\cap i\mathbb R\subset\{0\}\), so \(\sigma(\mathcal A_\infty)\cap i\mathbb R\) is countable.

It remains to rule out eigenvalues on the imaginary axis. Since \(\mathcal A_\infty\) is self-adjoint, it is enough to show that \(0\) is not an eigenvalue. Suppose, to the contrary, that \(0\) is an eigenvalue with associated eigenfunction \(g\in H^4(\mathbb R)\). Then
\begin{equation*}
0=-\langle g,\mathcal A_\infty g\rangle =\|g_{xx}\|_{L^2}^2+\kappa^2L^2\|g_x\|_{L^2}^2+\eta|\langle g,\bar u'\rangle|^2.
\end{equation*}
Thus \(g_x=0\), and since \(g\in L^2(\mathbb R)\), we have \(g=0\). This contradicts the assumption that \(g\) is an eigenfunction.
\end{proof}

\subsubsection{Proof of \eqref{F_decay}} \label{sec3.1.2}
The calculation below is justified for \(L^2\)-sources by the approximation argument in Appendix~\ref{app:duhamel-justification}. 

Fix $T>0$ and set
\begin{equation} \label{wdef}
w(t):=\int_T^t e^{(t-s)\mathcal A_\infty} \partial_x \mathcal{F}(\phi(\cdot,s))\,ds.
\end{equation}
Then $w$ solves
\begin{equation*}
w_t= \mathcal{A}_\infty w+\partial_x \mathcal{F}(\phi), \quad w(T)=0.
\end{equation*}
Taking the $L^2$-inner product with $w$, we obtain
\begin{equation*}
\frac12\frac{d}{dt}\|w\|_{L^2}^2+Q_\infty[w]=\langle w,\partial_x \mathcal{F}(\phi)\rangle=-\langle w_x,\mathcal{F}(\phi)\rangle,
\end{equation*}
where we set
\begin{equation*}
Q_\infty[w]:=-\langle w, \mathcal{A}_\infty w\rangle = \lVert w_{xx} \rVert_{L^2}^2 + \kappa^2 L^2 \lVert w_x \rVert_{L^2}^2 + \eta \left| \langle w,\bar{u}' \rangle \right|^2 \geq 0.
\end{equation*}
Since $\kappa^2L^2\|w_x\|_{L^2}^2\le Q_\infty[w]$, we have
\begin{equation*}
\frac12\frac{d}{dt}\|w\|_{L^2}^2+Q_\infty[w]\le \frac{1}{\kappa L}Q_\infty[w]^{1/2}\|\mathcal{F}(\phi(\cdot,t))\|_{L^2}\le \frac12Q_\infty[w]+\frac{1}{2\kappa^2L^2}\|\mathcal{F}(\phi(\cdot,t))\|_{L^2}^2.
\end{equation*}
Thus
\begin{equation*}
\frac{d}{dt}\|w\|_{L^2}^2\le \frac{1}{\kappa^2L^2}\|\mathcal{F}(\phi(\cdot,t))\|_{L^2}^2.
\end{equation*}
Integrating from $T$ to $t$ yields
\begin{equation*}
\|w(t)\|_{L^2}^2 \leq \frac{1}{\kappa^2L^2}\int_T^t \|\mathcal{F}(\phi(\cdot,s))\|_{L^2}^2 \,ds \leq \frac{1}{\kappa^2L^2}\|\mathcal{F}(\phi)\|_{L^2([T,\infty);L^2)}^2.
\end{equation*}
Taking the supremum over $t\geq T$ gives
\begin{equation} \label{FxF}
\sup_{t\geq T}\|w(t)\|_{L^2}\leq \frac{1}{\kappa L}\|\mathcal{F}(\phi)\|_{L^2([T,\infty);L^2)}.
\end{equation}
The lemma below provides an estimate for the right-hand side of \eqref{FxF}.

\begin{lemma}\label{lem:source-tail-asymp}
Assume the hypotheses of Theorem~\ref{Main}. Then $\mathcal{F}(\phi)$ defined by \eqref{Fdef} satisfies
\begin{equation}\label{eq:F-tail-asymp}
\|\mathcal{F}(\phi)\|_{L^2([T,\infty);L^2)}^2\le C\int_T^\infty \mathcal{D}(t)\,dt
\end{equation}
for any $T>0$, where $C>0$ is independent of $T$, and $\mathcal{D}(\cdot)$ is defined as in Proposition~\ref{apriori}.
\end{lemma}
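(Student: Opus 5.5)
We bound each of the four terms in \eqref{Fdef} pointwise in time by $C\mathcal D(t)$, using only the uniform smallness $\|\phi(\cdot,t)\|_{L^2}\le\varepsilon_0$ from \eqref{cont} together with the one-dimensional interpolation inequalities. We then integrate over $[T,\infty)$. Since Proposition~\ref{apriori} shows that $\int_0^\infty\mathcal D\,dt\le c_0^{-1}\varepsilon_0^2$, the right-hand side of \eqref{eq:F-tail-asymp} is finite, and it tends to zero as $T\to\infty$, which is what \eqref{F_decay} needs. Throughout, $C$ depends only on $\kappa,L$ and $\varepsilon_0$.

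\emph{Linear terms.} Since $\bar u'$ is bounded, we have $\|\bar u'\phi_x\|_{L^2}^2\le C\|\phi_x\|_{L^2}^2\le C\mathcal D$. For the term with $\bar u''$, we use $|\bar u''|\le C\bar u'$, noted in Section~\ref{sec2.3}, and the boundedness of $\bar u'$. This gives $\|\bar u''\phi\|_{L^2}^2\le C\int\bar u'^2\phi^2\,dx\le C\|\sqrt{\bar u'}\phi\|_{L^2}^2\le C\mathcal D$. The weighted term in $\mathcal D$ is exactly what controls this contribution, and it is the reason that term was kept in the dissipation.

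\emph{Shift term.} We have $\eta\phi\int\bar u'\phi\,dx=-\dot\delta\,\phi$. Its $L^2$ norm squared is therefore $|\dot\delta|^2\|\phi\|_{L^2}^2\le\varepsilon_0^2|\dot\delta|^2\le C\mathcal D$.

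\emph{Nonlinear term.} This is the main point. We write
\[
\partial_xN(\phi)=\tfrac12B'''\bar u'\phi^2+B''(\bar u)\phi\phi_x+\tfrac12B'''\phi^2\phi_x,
\]
using that $(B''(\bar u))_x=B'''\bar u'$ with $B'''$ constant. We need $L^\infty$ control of $\phi$ in terms of the dissipation. Gagliardo--Nirenberg gives $\|\phi\|_{L^\infty}^2\le\|\phi\|_{L^2}\|\phi_x\|_{L^2}\le\varepsilon_0\|\phi_x\|_{L^2}$. We also need a uniform bound $\sup_t\|\phi_x\|_{L^2}\le M$, which holds for $t\ge T$ when $T$ is large.
\begin{itemize}
\item[(i)] For the first piece, $\|\bar u'\phi^2\|_{L^2}^2\le C\|\phi\|_{L^\infty}^2\|\sqrt{\bar u'}\phi\|_{L^2}^2\le C\mathcal D$, using $\|\phi\|_{L^\infty}\le C$.
\item[(ii)] For the second piece, $\|\phi\phi_x\|_{L^2}^2\le\|\phi\|_{L^\infty}^2\|\phi_x\|_{L^2}^2\le C\mathcal D$.
\item[(iii)] For the third piece, $\|\phi^2\phi_x\|_{L^2}^2\le\|\phi\|_{L^\infty}^4\|\phi_x\|_{L^2}^2\le C\mathcal D$.
\end{itemize}
So the nonlinear term only requires a uniform $L^\infty$ bound on $\phi$, i.e.\ a uniform $H^1$ bound.

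That uniform bound comes from the energy argument in the proof of \eqref{cont}. There we showed $\|\phi_x(\cdot,t)\|_{L^2}\le\sqrt{2E[u(t_*)]}+\|\bar u'\|_{L^2}$ for $t\ge t_*$, where $t_*$ is any time with $\phi(\cdot,t_*)\in H^2$. This bound holds for all $t\ge t_*$ with $t_*>0$ small. For $T<t_*$, the interval $[T,t_*]$ is compact, and there we use the parabolic regularity \eqref{regularity}; strictly speaking this makes $C$ depend on a lower bound for $T$. To avoid that, we can instead use $|\phi|^k\le C|\phi|$ from the $L^\infty$ bound on $u$ and $\bar u$ in the admissible class, at the cost of the same dependence. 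The cleanest option is to assert $C$ uniform for $T\ge T_1$ with $T_1$ fixed, which suffices for \eqref{F_decay}.

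The main obstacle is therefore making the $L^\infty$ or $H^1$ bound on $\phi$ genuinely uniform in time. We would rely on the monotonicity of $E[u(t)]$ and on the fact that $E[u(t_*)]\le C$ whenever $\|\phi(\cdot,t_*)\|_{H^1}$ is bounded. Such a $t_*$ can be chosen in $(0,1]$ because $\int_0^1\mathcal D<\infty$.

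Summing the four estimates yields $\|\mathcal F(\phi(\cdot,t))\|_{L^2}^2\le C\mathcal D(t)$ for $t\ge T$. Integrating over $[T,\infty)$ gives \eqref{eq:F-tail-asymp}.
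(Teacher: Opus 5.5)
Your linear and shift terms are handled correctly, as in the paper. The nonlinear term has a genuine gap. Your bound on $\partial_xN(\phi)$ depends on a uniform-in-time bound $\|\phi(\cdot,t)\|_{L^\infty}\le C$, and you never establish it on all of $(0,\infty)$. By your own account the constant ends up depending on a lower bound for $T$, and you fall back to ``$T\ge T_1$''. That is strictly weaker than the lemma, which asserts the estimate for every $T>0$ with $C$ independent of $T$. Your opening claim that $C$ depends only on $\kappa,L,\varepsilon_0$ also contradicts your argument: for short times your $L^\infty$ bound comes only from the regularity class, i.e.\ from $\|u_0\|_{C^\alpha}$. Incidentally, the feared dependence on a lower bound for $T$ is not real. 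The $C^{\alpha,\alpha/4}(\mathbb R\times[0,1])$ part of \eqref{regularity} bounds $u$ in $L^\infty$ uniformly down to $t=0$. Combined with your energy bound for $t\ge t_*$, this gives a $T$-independent but data-dependent constant. So your route can be repaired, but not as written.

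The missing idea is that no $L^\infty$ or $H^1$ bound is needed, because $\mathcal D$ already contains $\|\phi_{xx}\|_{L^2}^2$. Use $\|\phi\|_{L^2}\le\varepsilon_0$ from \eqref{cont} together with
\[
\|\phi\|_{L^\infty}^2\le \|\phi\|_{L^2}\|\phi_x\|_{L^2}\le \varepsilon_0\|\phi_x\|_{L^2},\qquad \|\phi_x\|_{L^2}^2\le\|\phi\|_{L^2}\|\phi_{xx}\|_{L^2}\le\varepsilon_0\|\phi_{xx}\|_{L^2},
\]
and $\|\sqrt{\bar u'}\phi\|_{L^2}^2\le\|\bar u'\|_{L^1}\|\phi\|_{L^\infty}^2$. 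These turn every superquadratic quantity into dissipation:
\begin{itemize}
\item $\|\phi\|_{L^\infty}^2\|\sqrt{\bar u'}\phi\|_{L^2}^2\le C\varepsilon_0^2\|\phi_x\|_{L^2}^2$;
\item $\|\phi\|_{L^\infty}^2\|\phi_x\|_{L^2}^2\le \varepsilon_0\|\phi_x\|_{L^2}^3\le \varepsilon_0^2\|\phi_x\|_{L^2}\|\phi_{xx}\|_{L^2}$;
\item $\|\phi\|_{L^\infty}^4\|\phi_x\|_{L^2}^2\le \varepsilon_0^2\|\phi_x\|_{L^2}^4\le \varepsilon_0^4\|\phi_{xx}\|_{L^2}^2$.
\end{itemize}
This is how the paper gets $\|\partial_xN(\phi)\|_{L^2}^2\le C\varepsilon_0^2(\|\phi_x\|_{L^2}^2+\|\phi_{xx}\|_{L^2}^2)$ at a.e.\ $t>0$, with $C$ depending only on $\kappa,L,\varepsilon_0$. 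The tail estimate then holds for every $T>0$, using nothing beyond \eqref{cont} and \eqref{aprioriest}.
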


\begin{proof}
It suffices to prove the pointwise estimate
\begin{equation*}
\|\mathcal{F}(\phi)(\cdot,t)\|_{L^2}^2\le C \mathcal{D}(t)
\end{equation*}
for a.e. \(t>0\). Fix a time \(t>0\) such that \(\mathcal D(t)<\infty\), and suppress the time dependence in the notation. Taking the $L^2$ norm in $x$ on both sides of \eqref{Fdef}, we have
\begin{equation} \label{FL2}
\lVert \mathcal{F}(\phi) \rVert_{L^2} \leq \lVert 6 \kappa \bar{u}' \phi_x + 6 \kappa \bar{u}'' \phi \rVert_{L^2} + \lVert \partial_x N(\phi) \rVert_{L^2} + \left\lVert \eta \phi \int_\mathbb{R} \bar{u}'\phi \, dx \right\rVert_{L^2}.
\end{equation}
We estimate each term on the right-hand side. First, using \(|\bar u''|^2\le C\bar u'\), we obtain
\begin{equation}\label{eq:linear-source-bound-asymp}
\|6\kappa\bar u'\phi_x+6\kappa\bar u''\phi\|_{L^2}^2\le C\|\phi_x\|_{L^2}^2+C\|\sqrt{\bar u'}\,\phi\|_{L^2}^2.
\end{equation}
We next estimate \(\partial_xN(\phi)\). From the definitions of \(N(\phi)\) and \(B\), we have
\begin{equation*}
\begin{split}
\partial_xN(\phi)&=\frac12B'''\bar u'\phi^2+B''(\bar u)\phi\phi_x+\frac12B'''\phi^2\phi_x \\
&=6\kappa^2\bar u'\phi^2-6\kappa^2(u_-+u_+-2\bar u)\phi\phi_x+6\kappa^2\phi^2\phi_x.
\end{split}
\end{equation*}
Taking the $L^2$ norm in $x$ and squaring, and using the boundedness of $u_-+u_+-2\bar u$, we obtain
\begin{equation} \label{basicNineq}
\begin{split}
\|\partial_xN(\phi)\|_{L^2}^2 & \le C \left( \|\bar u'\phi^2\|_{L^2}^2 + \|\phi\phi_x\|_{L^2}^2 + \|\phi^2\phi_x\|_{L^2}^2 \right) \\
& \leq C\left(\|\phi\|_{L^\infty}^2\|\sqrt{\bar u'}\,\phi\|_{L^2}^2+\|\phi\|_{L^\infty}^2\|\phi_x\|_{L^2}^2+\|\phi\|_{L^\infty}^4\|\phi_x\|_{L^2}^2\right).
\end{split}
\end{equation}
By the $L^2$-contraction estimate \eqref{cont}, we have
\begin{equation*}
\|\phi(\cdot,t)\|_{L^2}\le C\varepsilon_0 \quad \text{for all } t \geq0
\end{equation*}
for sufficiently small \(\varepsilon_0>0\). Hence the one-dimensional Gagliardo--Nirenberg inequality gives
\begin{equation}\label{eq:GN-Linfty-asymp}
\|\phi\|_{L^\infty}^2\le C\|\phi\|_{L^2}\|\phi_x\|_{L^2}\le C\varepsilon_0\|\phi_x\|_{L^2}.
\end{equation}
Moreover, integration by parts yields
\begin{equation}\label{eq:x-by-xx-asymp}
\|\phi_x\|_{L^2}^2=-\int_{\mathbb R}\phi\phi_{xx}\,dx\le \|\phi\|_{L^2}\|\phi_{xx}\|_{L^2}\le C\varepsilon_0\|\phi_{xx}\|_{L^2}.
\end{equation}
Also,
\begin{equation}\label{eq:localized-L2-by-Linfty-asymp}
\|\sqrt{\bar u'}\,\phi\|_{L^2}^2\le \|\bar u'\|_{L^1}\|\phi\|_{L^\infty}^2\le C\varepsilon_0\|\phi_x\|_{L^2}.
\end{equation}
Combining \eqref{eq:GN-Linfty-asymp}--\eqref{eq:localized-L2-by-Linfty-asymp} with \eqref{basicNineq}, we obtain
\begin{equation}\label{eq:N-bound-asymp}
\begin{split}
\|\partial_xN(\phi)\|_{L^2}^2 &\leq C \varepsilon_0^2 \left( \lVert \phi_x \rVert_{L^2}^2 + \lVert \phi_x \rVert_{L^2}\lVert \phi_{xx} \rVert_{L^2} + \lVert \phi_x \rVert_{L^2}^4 \right) \\
& \leq C \varepsilon_0^2 \left( \lVert \phi_x \rVert_{L^2}^2 + \lVert \phi_{xx} \rVert_{L^2}^2 \right)
\end{split}
\end{equation}
for sufficiently small $\varepsilon_0>0$. For the last term of \eqref{FL2}, we use \eqref{cont} and \eqref{delta} to obtain
\begin{equation}\label{eq:phase-source-bound-asymp}
\left\|\eta\phi\int_{\mathbb R}\bar u'\phi\,dx\right\|_{L^2}^2=|\dot\delta(t)|^2\|\phi\|_{L^2}^2\le C\varepsilon_0^2|\dot\delta(t)|^2.
\end{equation}

Combining \eqref{FL2}, \eqref{eq:linear-source-bound-asymp}, \eqref{eq:N-bound-asymp}, and \eqref{eq:phase-source-bound-asymp}, we conclude that
\begin{equation*}
\|\mathcal{F}(\phi)(\cdot,t)\|_{L^2}^2\le C(\|\phi_x(\cdot,t)\|_{L^2}^2+\|\phi_{xx}(\cdot,t)\|_{L^2}^2+\|\sqrt{\bar u'}\,\phi(\cdot,t)\|_{L^2}^2+|\dot\delta(t)|^2)\leq C\mathcal{D}(t).
\end{equation*}
Integrating this estimate over $[T,\infty)$ gives \eqref{eq:F-tail-asymp}.
\end{proof}

By \eqref{FxF} and Lemma~\ref{lem:source-tail-asymp}, for any $T>0$, we have
\begin{equation*}
\sup_{t\ge T}\left\|\int_T^t e^{(t-s)\mathcal A_\infty}\partial_x \mathcal{F}(\phi(\cdot,s))\,ds\right\|_{L^2}
\le \frac{1}{\kappa L}\|\mathcal{F}(\phi)\|_{L^2([T,\infty);L^2)}
\le C\left(\int_T^\infty \mathcal{D}(s)\,ds\right)^{1/2}.
\end{equation*}
Since \(\mathcal{D}\in L^1([0,\infty))\) by \eqref{aprioriest}, we have
\begin{equation*}
\int_T^\infty \mathcal{D}(s)\,ds\to0\quad\text{as }T\to\infty.
\end{equation*}
Hence \eqref{F_decay} follows. Together with \eqref{S_decay}, this proves \eqref{L2stability}.

\section{Proof of the Poincar\'e-type inequalities} \label{sec4}
We prove Proposition~\ref{fourth} and Corollary~\ref{fourth-cor}, which were stated in Section~\ref{sec2.2}. Recall that
\begin{equation*}
q(y) = \kappa (y-u_-)(u_+-y), \quad V(y) = - \frac{B''(y)B(y)+B'''(y)q(y)^2}{2q(y)}, \quad y \in (u_-,u_+).
\end{equation*}
For $\Phi$ as in Proposition~\ref{fourth}, we define
\begin{align}
M[\Phi] & := \int_{u_-}^{u_+}\Phi(y)\,dy, \label{eq:general-M} \\
J[\Phi] & := \int_{u_-}^{u_+}q(y)\Phi_y(y)^2\,dy, \label{eq:general-J} \\
Q[\Phi] & := \int_{u_-}^{u_+}q(y)^3\Phi_{yy}(y)^2\,dy + \int_{u_-}^{u_+}V(y)\Phi(y)^2\,dy, \label{eq:general-Q} \\
N[\Phi] & := \int_{u_-}^{u_+}\Phi^2 \, dy. \label{eq:general-N}
\end{align}
With this notation, Proposition~\ref{fourth} can be restated as follows.

\begin{propositionprime} 
There exists a constant $c_{\kappa,L}>0$ such that, for any $\Phi \in L^2(u_-,u_+)$ satisfying
\begin{equation*}
q^{1/2}\Phi_y \in L^2(u_-,u_+), \quad q^{3/2} \Phi_{yy} \in L^2(u_-,u_+),
\end{equation*}
we have
\begin{equation} \label{eq:generalized-fourth-order-Poincare}
Q[\Phi] + \eta M[\Phi]^2 \geq c_{\kappa,L} \left( J[\Phi] + M[\Phi]^2 \right),
\end{equation}
where $\eta=4\kappa^3L$. Moreover, there exists a constant $C_{\kappa,L}>0$ such that
\begin{equation} \label{eq:generalized-Poincare}
N[\Phi] \leq C_{\kappa,L} \left( J[\Phi] + M[\Phi]^2 \right).
\end{equation}
\end{propositionprime}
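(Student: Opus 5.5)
The plan is to pass to the normalized variable $z\in[-1,1]$, defined by $y=\frac{u_-+u_+}{2}+\frac L2 z$, and set $\Psi(z)=\Phi(y)$. In this variable $q=\frac{\kappa L^2}{4}(1-z^2)$. A direct computation from \eqref{eq:B} gives $B(y)=-\kappa L z\,q(y)$, and from this $V$ should be an explicit constant multiple of $3z^2-1=2P_2(z)$. After rescaling, \eqref{eq:generalized-fourth-order-Poincare} then reduces, up to constants depending only on $\kappa,L$, to the normalized inequality displayed in the introduction:
$$\tfrac18\int_{-1}^1(1-z^2)^3\Psi_{zz}^2\,dz+\tfrac34\int_{-1}^1(3z^2-1)\Psi^2\,dz+\Big(\int_{-1}^1\Psi\,dz\Big)^2\ \ge\ c\Big(\int_{-1}^1(1-z^2)\Psi_z^2\,dz+\Big(\int_{-1}^1\Psi\,dz\Big)^2\Big).$$
I will first prove everything for polynomials $\Psi$ and then pass to the stated class by the polynomial density in the weighted space (Appendix~\ref{density}). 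All terms are continuous with respect to the norm $\|\Psi\|_{L^2}+\|(1-z^2)^{1/2}\Psi_z\|_{L^2}+\|(1-z^2)^{3/2}\Psi_{zz}\|_{L^2}$, so the density step is routine.

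For \eqref{eq:generalized-Poincare} I expand $\Psi=\sum_n a_nP_n$ and use $\|P_n\|^2=\frac{2}{2n+1}$ together with the Legendre equation $((1-z^2)P_n')'=-n(n+1)P_n$. These give
$$\int(1-z^2)\Psi_z^2=\sum_n n(n+1)\tfrac{2}{2n+1}a_n^2,\qquad \Big(\int\Psi\Big)^2=4a_0^2,\qquad \int\Psi^2=\sum_n\tfrac{2}{2n+1}a_n^2.$$
Since $n(n+1)\ge 2$ for $n\ge1$, the bound $N\le \frac12 J+\frac12 M^2$ in normalized form is immediate, and \eqref{eq:generalized-Poincare} follows after undoing the scaling.

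For \eqref{eq:generalized-fourth-order-Poincare} I write the quadratic form $\mathcal{Q}_0[\Psi]:=\frac18\int(1-z^2)^3\Psi_{zz}^2+\frac34\int(3z^2-1)\Psi^2$ as a banded matrix in the coefficients $(a_n)$. The potential term couples only modes $n$ and $n\pm2$, via $P_2P_n=\alpha_nP_{n+2}+\beta_nP_n+\gamma_nP_{n-2}$. The fourth-order term I handle with the identity $(1-z^2)\Psi_{zz}=((1-z^2)\Psi_z)'+2z\Psi_z$. This gives $(1-z^2)\Psi_{zz}=\sum_n a_n(-n(n+1)P_n+2zP_n')$. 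Expressing $zP_n'=nP_n+(2n-3)P_{n-2}+(2n-7)P_{n-4}+\cdots$ and then multiplying by the remaining weight $(1-z^2)$ leads to a finite-bandwidth but non-diagonal representation. This is the computation I expect the appendix on the $Q_0$ coefficients to carry out.

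The structural checks come next. The translational mode $\bar u'$ corresponds to $1-z^2=\frac23(P_0-P_2)$, which must lie in the kernel of $\mathcal{Q}_0$ (it is the zero mode of the linearized operator). Its mean is nonzero, so the term $M^2$ removes exactly this degeneracy. For large $n$, the diagonal of the fourth-order part should grow like $n^4\cdot\frac{2}{2n+1}$, while the potential part is $O(\frac{1}{n})$ times bounded coefficients. So for $n\ge n_0$ the form dominates $n(n+1)\frac{2}{2n+1}a_n^2$ with room to spare, by Gershgorin-type or Schur-test bounds on the off-diagonal entries.

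The main obstacle is the low modes, together with the coupling between the low block and the tail. The potential $3z^2-1$ is negative near $z=0$, so $\mathcal{Q}_0$ is genuinely indefinite on the $P_0$ direction. Coercivity must then come from a finite-dimensional computation: restricted to $\mathrm{span}\{P_0,\dots,P_{n_0}\}$, separately for the even and odd parity sectors (the form preserves parity), the matrix of $\mathcal{Q}_0+M^2$ minus $c(J+M^2)$ must be positive semidefinite for some explicit $c$. The paper's value is $c=3/56$. The cross terms between the low block and the tail will be absorbed by Young's inequality, using the spare $n^4$ growth of the tail. If the fourth-order term turns out not to be cleanly banded, my fallback is the lower bound $(1-z^2)^3\Psi_{zz}^2\ge$ a Legendre-diagonal quantity obtained by integrating by parts against $(1-z^2)^2\Psi_z\Psi_{zz}$.

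Finally, Corollary~\ref{fourth-cor} will follow by combining \eqref{eq:generalized-fourth-order-Poincare} with the trivial bound $\int q^3\Phi_{yy}^2\le Q[\Phi]+\|V\|_\infty N[\Phi]$ and with \eqref{eq:generalized-Poincare}.
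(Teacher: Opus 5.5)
Your rescaling, the density step, and the Legendre proof of \eqref{eq:generalized-Poincare} are exactly the paper's. The gap is in \eqref{eq:generalized-fourth-order-Poincare}: you never actually establish coercivity of $Q_0+M_0^2$, and the mechanism you propose for the high modes is false.

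By Appendix~\ref{app:Q0-coefficients}, $Q_0[\Psi]=\sum_n d_na_n^2+2\sum_n e_na_na_{n+2}$ with $d_n=\frac{n^3}{16}+\frac{3n^2}{32}+\frac n8+O(1)$. The off-diagonal entries coming from the fourth-order term are of the same order, $e_n\sim-\frac{n^3}{32}$. In fact $|e_{n-2}|+|e_n|=\frac{n^3}{16}+\frac{3n^2}{32}-\frac n4+O(1)$. So after exact cancellation at orders $n^3$ and $n^2$, the Gershgorin margin $d_n-|e_{n-2}|-|e_n|$ is only about $\frac38 n$, the same order as $g_n=\frac{2n(n+1)}{2n+1}$.

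This is not an artifact of the bound. Write $s=\operatorname{artanh}z$, $f(s)=\Psi(\tanh s)$, and $g=f_s$ compactly supported. Then $\frac18\int_{-1}^1(1-z^2)^3\Psi_{zz}^2\,dz=\frac18\int_{\mathbb R}\big(g'^2+(4-6\operatorname{sech}^2 s)g^2\big)\,ds$, while $J_0=\int_{\mathbb R}g^2\,ds$. Slowly varying $g$ supported at large $|s|$ (so with mostly high Legendre content) therefore give a ratio close to $\frac12$.

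There is thus no ``spare $n^4$ growth'', and several steps of your plan fail as stated:
\begin{itemize}
\item A Gershgorin or Schur bound that treats the off-diagonal entries as lower order cannot close; it works only with the exact coefficients.
\item The couplings across a cut at $n_0$ have size $|e_{n_0}|\sim n_0^3/32$, and they cannot be absorbed by Young's inequality into a surplus that does not exist.
\item The positivity of your finite low block is asserted rather than checked.
\end{itemize}

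What is needed is the exact coefficients and an exact row-by-row check. The paper does not get the banded structure from your expansion of $zP_n'$, which produces a full triangular transform and hides the cancellation. Instead it uses that $\mathcal LP_n=((1-z^2)^3P_n'')''$ is a polynomial of degree at most $n+2$, with $\mathcal L$ symmetric and parity preserving. Hence $\mathcal LP_n=\alpha_nP_{n+2}+\beta_nP_n+\gamma_nP_{n-2}$, where $\alpha_n$ comes from leading coefficients, $\gamma_n$ from symmetry, and $\beta_n$ from $\mathcal LP_n(1)=0$.

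No low block is then needed. For mean-zero $\Psi$ one has $a_0=0$, which kills the coupling $2e_0a_0a_2$. One then verifies $d_n-|e_{n-2}|-|e_n|\ge\frac{3}{28}g_n$ for every $n\ge1$: by hand for $n\le3$, and by a polynomial inequality for $n\ge4$. The mean is reinserted through the single term $\frac65a_0a_2$, using $a_2^2\le\frac{5}{12}J_0$ and Young's inequality.

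Note also that $1-z^2$ is not in the kernel of the bilinear form of $Q_0$; it is only an isotropic vector. On $\mathrm{span}\{P_0,P_2\}$ one has $Q_0=\frac65a_2^2+\frac65a_0a_2$, which is indefinite. So the term $M_0^2$, whose coefficient is fixed by $\eta=4\kappa^3L$, must remove a negative direction, not merely a degeneracy.
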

We reduce the proof to normalized inequalities on the reference interval $[-1,1]$. After establishing these inequalities, we relate the general functionals \eqref{eq:general-M}--\eqref{eq:general-N} to their normalized counterparts through scaling, which yields \eqref{eq:generalized-fourth-order-Poincare} and \eqref{eq:generalized-Poincare}.

\subsection{Poincar\'e-type inequalities for the normalized functionals} \label{sec4.1}
We introduce the normalized functionals \(M_0\), \(J_0\), \(Q_0\), and \(N_0\) on \([-1,1]\) by
\begin{equation}\label{eq:normalized-forms}
\begin{aligned}
M_0[\Psi] &:= \int_{-1}^{1}\Psi(z)\,dz,\\
J_0[\Psi] &:= \int_{-1}^{1}(1-z^2)\Psi_z(z)^2\,dz,\\
Q_0[\Psi] &:= \frac18\int_{-1}^{1}(1-z^2)^3 \Psi_{zz}(z)^2\,dz +\frac34\int_{-1}^{1}(3z^2-1)\Psi(z)^2\,dz, \\
N_0[\Psi] & := \int_{-1}^{1}\Psi^2(z) \, dz.
\end{aligned}
\end{equation}

We prove the normalized coercivity estimate.

\begin{lemma}[A second-order weighted Poincar\'e inequality] \label{CH-Poincare}
For any \(\Psi \in L^2(-1,1)\) such that
\begin{equation*}
(1-z^2)^{1/2}\Psi_z\in L^2(-1,1), \quad (1-z^2)^{3/2}\Psi_{zz} \in L^2(-1,1),
\end{equation*}
we have
\begin{equation}\label{eq:CH-Poincare-main}
Q_0[\Psi]+M_0[\Psi]^2 \ge \frac{3}{56}\left(J_0[\Psi]+M_0[\Psi]^2\right).
\end{equation}
\end{lemma}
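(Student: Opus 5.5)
The plan is to expand $\Psi$ in Legendre polynomials, $\Psi=\sum_{n\ge0}a_nP_n$, and turn \eqref{eq:CH-Poincare-main} into a statement about an explicit banded quadratic form in the coefficients $(a_n)$. By a density argument (cf. Appendix~\ref{density}) it suffices to treat polynomial $\Psi$, so all sums below are finite. The right-hand side is then diagonal:
\begin{equation*}
J_0[\Psi]=\sum_{n\ge0}\frac{2n(n+1)}{2n+1}a_n^2,\qquad M_0[\Psi]=2a_0,\qquad N_0[\Psi]=\sum_{n\ge0}\frac{2}{2n+1}a_n^2 .
\end{equation*}
The potential part of $Q_0$ uses $3z^2-1=2P_2$. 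By the product formula for Legendre polynomials, $\int(3z^2-1)\Psi^2\,dz$ couples only the indices $n$ and $n\pm2$.

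For the second-order part, I would use the fact that $P_n''$ is a multiple of the Gegenbauer polynomial $C^{(5/2)}_{n-2}$. These are orthogonal with weight $(1-z^2)^2$. Writing $(1-z^2)^3=(1-z^2)\cdot(1-z^2)^2$ and using the three-term recurrence for $zC^{(5/2)}_k$, the form $\int(1-z^2)^3\Psi_{zz}^2\,dz$ also couples only $n$ and $n\pm2$. Its diagonal entries grow like $n^3$. Consequently:
\begin{enumerate}
\item $Q_0[\Psi]+M_0[\Psi]^2$ splits into independent even and odd parity blocks.
\item Each block is a symmetric pentadiagonal matrix in $(a_n)$, which is tridiagonal once even and odd indices are separated.
\item The entries are explicit rational functions of $n$; I would compute them and record them as in Appendix~\ref{app:Q0-coefficients}.
\end{enumerate}

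A structural check guides the argument. The translational direction $1-z^2=\tfrac23(P_0-P_2)$ satisfies $Q_0[1-z^2]=\tfrac{16}{35}-\tfrac{16}{35}=0$, so $Q_0$ is only positive semidefinite and the degeneracy sits in the $(a_0,a_2)$ plane. Since $1-z^2$ has nonzero mean, the rank-one term $M_0^2=4a_0^2$ removes this degeneracy. The proof therefore splits into a low-mode part and a high-mode part.
\begin{enumerate}
\item \emph{High modes.} For $n\ge n_0$ (probably $n_0=3$ or $4$ suffices), the diagonal of the $\Psi_{zz}$ form dominates. The off-diagonal entries, and the entire potential term, are bounded by $\tfrac32 N_0$-type quantities of order $a_n^2/n$. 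A Gershgorin or weighted Cauchy--Schwarz bound $2|a_na_{n+2}|\le\theta_na_n^2+\theta_n^{-1}a_{n+2}^2$, with $\theta_n$ tuned to the growth rates, then gives $Q_0\ge c\,J_0$ on these modes. The margin should be generous because $n^3\gg n$.
\item \emph{Low modes.} The finitely many low modes are handled by direct computation. In the even block, the $\{a_0,a_2\}$ subsystem coupled to $a_4$, with $M_0^2$ added, becomes a small explicit positive definite matrix. The odd block $\{a_1,a_3\}$ needs no mean correction, since there $Q_0$ should already be coercive; I would verify this by checking the $2\times2$ or $3\times3$ principal minors.
\end{enumerate}

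The main obstacle is the gluing between the low and high modes. The coupling entries between $a_{n_0-2},a_{n_0}$ and the tail must be absorbed without losing the explicit constant $\tfrac3{56}$. First I would try to prove the cleaner inequality
\begin{equation*}
Q_0+M_0^2-\tfrac3{56}(J_0+M_0^2)\ge0
\end{equation*}
by writing the tridiagonal parity blocks as sums of squares: completing the square successively from the top index downward, and showing that all resulting pivots stay positive via an induction on $n$ with explicit rational bounds. If the pivots are hard to control uniformly, the fallback is to verify the first few pivots numerically-exactly, which is a finite computation. For the rest I would use the asymptotic dominance argument from the high-mode step, with $n_0$ chosen large enough that the tail estimate holds with room to spare.
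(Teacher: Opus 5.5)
Your setup is the paper's: the Legendre expansion, the $n\leftrightarrow n\pm2$ band in each parity class, and the null direction $1-z^2=\tfrac{2}{3}(P_0-P_2)$ that $M_0^2$ removes. The gap is in the high-mode step, which is where the lemma actually lives, because your justification for it is false.

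Only the potential part has entries of size $O(1/n)$. The second-order part is not diagonally dominant with room to spare. Its off-diagonal entries
$\tfrac18\int_{-1}^1(1-z^2)^3P_n''P_{n+2}''\,dz=-\frac{(n-1)n(n+1)(n+2)(n+3)(n+4)}{4(2n+1)(2n+3)(2n+5)}\sim-\tfrac{n^3}{32}$
have the same order as the diagonal $d_n\sim\tfrac{n^3}{16}$. So in each row the off-diagonal mass $|e_{n-2}|+|e_n|$ equals $d_n$ to leading order. In the Gershgorin residual $d_n-|e_{n-2}|-|e_n|$ the $n^3$ and $n^2$ terms cancel exactly, leaving $\tfrac38(n+\tfrac12)+O(n^{-1})$. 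That is the same order as $g_n\sim n$, not $n^3$.

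No choice of $\theta_n$ and no large $n_0$ buys more room. With $z=\tanh s$ and $g=(1-z^2)\Psi_z$, one has $J_0=\|g\|_{L^2(\mathbb R)}^2$ and $\tfrac18\int_{-1}^1(1-z^2)^3\Psi_{zz}^2\,dz=\tfrac18\|g_s+2\tanh(s)\,g\|_{L^2(\mathbb R)}^2$. The associated operator $-\partial_s^2+4-6\operatorname{sech}^2 s$ has essential spectrum $[4,\infty)$. Hence on every tail spanned by $\{P_n\}_{n\ge n_0}$, the best constant in $Q_0\ge cJ_0$ is at most $\tfrac12+O(n_0^{-2})$. So ``$n^3\gg n$'' is not the mechanism. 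The tail bound is a fixed-ratio estimate that holds only because of an exact cancellation, and it must be checked from the closed-form coefficients, not from growth rates.

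That exact check is what the paper's proof consists of, and doing it also removes your gluing problem. The paper first sets $a_0=0$ and applies the unweighted bound $2e_na_na_{n+2}\ge-|e_n|(a_n^2+a_{n+2}^2)$ uniformly for $n\ge1$. It then verifies $m_n\ge\tfrac{3}{28}g_n$ for every $n\ge1$: directly for $n=1,2,3$, and via the positive sextic numerator in \eqref{denu} for $n\ge4$. No low/high split is needed.

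The mean is reinserted in one step. Since $d_0=0$, $Q_0[\Psi]=Q_0[\Psi-a_0]+\tfrac{6}{5}a_0a_2$. Young's inequality with $a_2^2\le\tfrac{5}{12}J_0$ and $M_0^2=4a_0^2$ then gives $Q_0+M_0^2\ge\tfrac{3}{56}J_0+\tfrac{3}{10}M_0^2$. If you replace your high-mode heuristic by this exact computation of $d_n-|e_{n-2}|-|e_n|$, your plan goes through and essentially becomes the paper's proof.
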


\begin{proof}
Let $X$ be the weighted space endowed with the norm
\begin{equation*}
\|\Psi\|_X^2:=\int_{-1}^1\Psi^2\,dz+\int_{-1}^1(1-z^2)\Psi_z^2\,dz+\int_{-1}^1(1-z^2)^3\Psi_{zz}^2\,dz.
\end{equation*}
Then polynomials are dense in $X$ with respect to $\|\cdot\|_X$; see Appendix~\ref{density}. Since $M_0$, $J_0$, and $Q_0$ are continuous with respect to this norm, it suffices to prove \eqref{eq:CH-Poincare-main} for polynomials.

For $n\ge 0$, let $P_n$ denote the Legendre polynomial normalized by $P_n(1)=1$, so that
\begin{equation} \label{Le1}
-\big((1-z^2)P_n'\big)'=n(n+1)P_n
\end{equation}
and
\begin{equation} \label{Le2}
\int_{-1}^1 P_nP_m\,dz=\frac{2}{2n+1}\delta_{nm},
\end{equation} 
where $\delta_{nm}$ is the Kronecker delta.
This follows from the fact that the Legendre polynomials form the orthogonal eigenbasis of the
self-adjoint Sturm--Liouville problem \eqref{Le1}. 

 For a polynomial $\Psi$, we write its finite Legendre expansion as
\begin{equation*}
\Psi(z) = \sum_{n\geq 0} a_n P_n(z), \quad a_n = \frac{2n+1}{2} \int_{-1}^1 \Psi(z)P_n(z) \, dz.
\end{equation*}
Multiplying \eqref{Le1} by $P_m$ and integrating the resulting identity from $z=-1$ to $z=1$, we observe that
\begin{equation*}
\begin{split}
\int_{-1}^1 - \big( (1-z^2)P_n'(z) \big)' P_m(z) \, dz 
& = \int_{-1}^1 (1-z^2)P_n'(z)P_m'(z) \, dz \\
&  = n(n+1) \int_{-1}^1 P_n(z)P_m(z) \, dz = \frac{2n(n+1)}{2n+1} \delta_{mn},
 \end{split}
\end{equation*}
where in the last equality we used \eqref{Le2}. Using this, we have
\begin{equation} \label{eq:J-Legendre}
\begin{split}
J_0[\Psi] &= \sum_{n \geq 1} \sum_{m \geq 1} a_n a_m \int_{-1}^1 (1-z^2) P_n'(z) P_m'(z) \, dz \\
&= \sum_{n\geq 1} \sum_{m \geq 1} a_n a_m \frac{2n(n+1)}{2n+1}\delta_{nm} \\
&= \sum_{n \geq 1} g_n a_n^2, \qquad g_n := \frac{2n(n+1)}{2n+1}.
\end{split}
\end{equation}
Moreover, by the coefficient calculation in Appendix~\ref{app:Q0-coefficients}, we have
\begin{equation}\label{eq:Q0-Legendre}
Q_0[\Psi]=\sum_{n\ge0}d_na_n^2+2\sum_{n\ge0}e_na_na_{n+2},
\end{equation}
where
\begin{equation} \label{dnen}
\begin{split}
d_n &=\frac{n^2(n+1)^2(n^2+n+1)}{2(2n-1)(2n+1)(2n+3)}, \\
e_n &=-\frac{(n+1)(n+2)(n^4+6n^3+5n^2-12n-18)}{4(2n+1)(2n+3)(2n+5)}.
\end{split}
\end{equation}
In particular,
\begin{equation*}
d_0 = 0, \quad e_0=\frac35, \quad e_1=\frac9{35}, \quad e_n<0\quad\text{for }n\ge2.
\end{equation*}

We first prove the mean-zero case, where $M_0[\Psi]=0$. By \eqref{Le2}, it holds that
\begin{equation*}
0=M_0[\Psi] = \sum_{n\geq 0} a_n \int_{-1}^1 P_n(z) \, dz = \sum_{n\geq 0} a_n \int_{-1}^1 P_n(z)P_0(z) \, dz = \sum_{n\geq 0} a_n \frac{2}{2n+1}\delta_{n0} =2a_0.
\end{equation*}
Using
\begin{equation*}
2e_na_na_{n+2} \ge -|e_n|a_n^2-|e_n|a_{n+2}^2,
\end{equation*}
and noting that the \(2e_0a_0a_2\) term vanishes because \(a_0=0\), we obtain
\begin{equation*}
Q_0[\Psi] = \sum_{n\geq 0}  d_n a_n^2 + 2 \sum_{n\geq 0} e_n a_n a_{n+2} \geq \sum_{n\geq 1} m_na_n^2,
\end{equation*}
where
\begin{equation*}
\begin{split}
m_1 &=d_1-|e_1|, \quad m_2=d_2-|e_2|, \\
m_n &=d_n-|e_{n-2}|-|e_n|, \quad n\ge3.
\end{split}
\end{equation*}
A direct substitution gives
\begin{equation*}
m_1=\frac17=\frac{3}{28}g_1,\quad m_2=\frac45\ge \frac{3}{28}g_2,\quad m_3-\frac{3}{28}g_3=\frac{355}{539}>0,
\end{equation*}
and, for $n\ge4$,
\begin{equation*}
m_n-\frac{3}{28}g_n=\frac{3 \left(40n^6 + 120n^5 + 52n^4- 96n^3 - 281n^2 - 213n + 126\right)}{14(2n-3)(2n-1)(2n+1)(2n+3)(2n+5)}.
\end{equation*}
The denominator is positive for $n\ge4$, and the numerator is positive since
\begin{equation} \label{denu}
\begin{split}
& 40n^6+120n^5+52n^4-96n^3-281n^2-213n+126 \\
& \quad \geq 40n^6-96n^3-281n^2-213n+126 \\
& \quad \ge 40n^6-590n^4+126>0,\quad n\ge4.
\end{split}
\end{equation}
In \eqref{denu}, the first inequality follows by dropping the nonnegative terms $120n^5+52n^4$, the second from $96n^3+281n^2+213n\le590n^4$, and the last from $40n^6-590n^4+126=n^4(40n^2-590)+126>0$. Hence $m_n\ge \frac{3}{28}g_n$ for all $n\ge1$, and consequently
\begin{equation}
\label{eq:mean-zero-Poincare}
Q_0[\Psi] \ge \frac{3}{28} \sum_{n\geq 1} g_na_n^2 = \frac{3}{28} J_0[\Psi].
\end{equation}

We now treat the general case. Let $\widetilde{\Psi} := \Psi -a_0$. Then, since \(M_0[\Psi]=2a_0\), 
\begin{equation*}
M_0[\widetilde{\Psi}] = \int_{-1}^1 \widetilde{\Psi} \, dz = \int_{-1}^1 (\Psi - a_0) \, dz = M_0[\Psi]-2a_0=0. 
\end{equation*}
Moreover,
\begin{equation*}
J_0[\widetilde{\Psi}]=\int_{-1}^1(1-z^2)\widetilde{\Psi}_z^2\,dz=\int_{-1}^1(1-z^2)\Psi_z^2\,dz = J_0[\Psi]
\end{equation*}
and, by \eqref{eq:Q0-Legendre},
\begin{equation*}
Q_0[\Psi] = Q_0[\widetilde{\Psi}] + 2e_0 a_0 a_2 = Q_0[\widetilde{\Psi}] + \frac{6}{5} a_0a_2.
\end{equation*}
By \eqref{eq:J-Legendre},
\begin{equation*}
a_2^2 \le \frac{1}{g_2}J_0[\Psi] = \frac{5}{12}J_0[\Psi].
\end{equation*}
Hence Young's inequality gives
\begin{equation*}
\left|\frac65a_0a_2\right| \leq \frac{9}{70} a_2^2 + \frac{1}{4 \cdot \frac{9}{70}} \left( \frac{6}{5}a_0 \right)^2 \le \frac{3}{56}J_0[\Psi] + \frac{14}{5}a_0^2.
\end{equation*}
Using the estimate \eqref{eq:mean-zero-Poincare} and $M_0[\Psi]=2a_0$, we obtain
\begin{equation*}
\begin{split}
Q_0[\Psi] +  M_0[\Psi]^2 & = Q_0[\widetilde{\Psi}] + \frac{6}{5}a_0a_2 +  M_0[\Psi]^2\\
& \geq \frac{3}{28}J_0[\Psi] + \frac{6}{5}a_0a_2 +  M_0[\Psi]^2 \\
& \geq \frac{3}{28}J_0[\Psi] - \frac{3}{56} J_0[\Psi] - \frac{14}{5} a_0^2 +  M_0[\Psi]^2 \\
& = \frac{3}{56} J_0[\Psi] + \frac{3}{10} M_0[\Psi]^2.
\end{split}
\end{equation*}
Since $\textstyle \frac{3}{10} > \frac{3}{56}$, this implies \eqref{eq:CH-Poincare-main}.
\end{proof}

\begin{remark} \label{notsharp}
The constant $\frac{3}{56}$ in \eqref{eq:CH-Poincare-main} is not claimed to be sharp; it is only a convenient explicit positive coercivity constant. A natural problem is to determine
the optimal constant
\begin{equation*}
    c_\ast
    :=
    \inf_{\Psi\in X_0,\ \Psi\neq0}
    \frac{Q_0[\Psi]+M_0[\Psi]^2}
         {J_0[\Psi]+M_0[\Psi]^2},
\end{equation*}
where the infimum is taken over the admissible class
\[
X_0:=\left\{\Psi\in L^2(-1,1):
(1-z^2)^{1/2}\Psi_z\in L^2(-1,1),\ 
(1-z^2)^{3/2}\Psi_{zz}\in L^2(-1,1)\right\}.
\]
By Lemma~\ref{CH-Poincare}, one has \(c_\ast\geq \frac{3}{56}\).

Heuristically, the estimate \eqref{eq:CH-Poincare-main} says that after controlling the constant mode through $M_0[\Psi]^2$, the second-order weighted energy $Q_0$ controls the first-order weighted energy $J_0$. This is nontrivial because the zeroth-order part of $Q_0$ contains the sign-changing weight $3z^2-1$.
\end{remark}

The weighted Poincar\'e estimate corresponding to \eqref{eq:generalized-Poincare} is the following.

\begin{lemma}[\cite{KV1}, Lemma~2.9] \label{Poincare}
For any $\Psi \in L^2(-1,1)$ satisfying $(1-z^2)^{1/2}\Psi_z\in L^2(-1,1)$, it holds that
\begin{equation*}
N_0[\Psi] \leq \frac{1}{2} \left( J_0[\Psi] + M_0[\Psi]^2 \right).
\end{equation*}
\end{lemma}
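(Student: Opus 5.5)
The plan is to diagonalize all three functionals in the Legendre basis, as in the proof of Lemma~\ref{CH-Poincare}, and compare coefficients mode by mode. Write $\Psi=\sum_{n\ge0}a_nP_n$ in $L^2(-1,1)$, with $a_n=\frac{2n+1}{2}\int_{-1}^1\Psi P_n\,dz$. By \eqref{Le2} and Parseval,
\begin{equation*}
N_0[\Psi]=\sum_{n\ge0}\frac{2}{2n+1}a_n^2 .
\end{equation*}
As computed in the proof of Lemma~\ref{CH-Poincare}, $M_0[\Psi]=2a_0$, so $M_0[\Psi]^2=4a_0^2$. The remaining and main point is the lower bound
\begin{equation*}
J_0[\Psi]\ge\sum_{n\ge1}g_na_n^2,\qquad g_n=\frac{2n(n+1)}{2n+1}.
\end{equation*}
For polynomials this is the identity \eqref{eq:J-Legendre}.

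\textbf{The main obstacle: the bound on $J_0$ for general $\Psi$.} Here only $(1-z^2)^{1/2}\Psi_z\in L^2$ is assumed, and no second-order control. So I would not rely on the density result of Appendix~\ref{density}, which is formulated for the stronger norm of $X$. Instead I would use Bessel's inequality in the weighted space $L^2((1-z^2)\,dz)$.

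\emph{Orthogonality.} By the computation preceding \eqref{eq:J-Legendre}, the functions $P_n'/\sqrt{g_n}$, $n\ge1$, are orthonormal in $L^2((1-z^2)\,dz)$. Bessel's inequality therefore gives
\begin{equation*}
J_0[\Psi]\ge\sum_{n\ge1}\frac{1}{g_n}\Big(\int_{-1}^1(1-z^2)\Psi_zP_n'\,dz\Big)^2 .
\end{equation*}

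\emph{Identifying the inner products.} I would integrate by parts on $[-1+\epsilon,1-\epsilon]$ and use \eqref{Le1}:
\begin{equation*}
\int(1-z^2)\Psi_zP_n'\,dz=\big[(1-z^2)\Psi P_n'\big]+n(n+1)\int\Psi P_n\,dz .
\end{equation*}

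\emph{Vanishing of the boundary term.} By Cauchy--Schwarz,
\begin{equation*}
|\Psi(z)-\Psi(0)|\le J_0[\Psi]^{1/2}\Big(\int_0^{|z|}\frac{ds}{1-s^2}\Big)^{1/2},
\end{equation*}
so $|\Psi(z)|\lesssim 1+\sqrt{\log\frac{1}{1-|z|}}$. Hence $(1-z^2)\Psi\to0$ as $z\to\pm1$, and the boundary terms vanish in the limit $\epsilon\to0$. The inner product is thus $n(n+1)\cdot\frac{2}{2n+1}a_n=g_na_n$, and Bessel's inequality yields the claimed bound on $J_0$.

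\textbf{Mode-by-mode comparison.} It then suffices to check, for each $n$, that the coefficient of $a_n^2$ on the left does not exceed half the coefficient on the right. For $n=0$ this reads
\begin{equation*}
2a_0^2\le\tfrac12\cdot4a_0^2,
\end{equation*}
which holds with equality. For $n\ge1$ we need
\begin{equation*}
\frac{2}{2n+1}\le\frac12 g_n=\frac{n(n+1)}{2n+1},
\end{equation*}
that is, $n(n+1)\ge2$, which holds for all $n\ge1$. Summing over $n$ gives $N_0[\Psi]\le\frac12\big(J_0[\Psi]+M_0[\Psi]^2\big)$. The only genuinely delicate step is the justification of the $J_0$ bound above; the coefficient comparison is elementary.
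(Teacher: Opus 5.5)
Your proof is correct, and its core coincides with the paper's. Both use the Legendre expansion, $M_0[\Psi]=2a_0$, $N_0[\Psi]=\sum_{n\ge0}\frac{2}{2n+1}a_n^2$, control of $J_0[\Psi]$ by $\sum_{n\ge1}g_na_n^2$, and the coefficient check $n(n+1)\ge2$.

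Where you differ is in passing from polynomials to a general $\Psi$. The paper proves the estimate for polynomials and then appeals to ``a standard density argument''. That requires density of polynomials in the space normed by $N_0+J_0$. This is not the space $X$ of Appendix~\ref{density}, though it follows from the same substitution $z=\tanh s$, approximating only $f_s=(1-z^2)\Psi_z$ in $L^2(\mathbb R)$.

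You bypass approximation entirely. Completeness of $\{P_n\}$ in $L^2(-1,1)$ handles $N_0$ and $M_0$. Bessel's inequality for the orthonormal system $\{P_n'/\sqrt{g_n}\}_{n\ge1}$ in $L^2((1-z^2)\,dz)$ then gives $J_0[\Psi]\ge\sum_{n\ge1}g_na_n^2$. This works once the weighted inner products are identified as $g_na_n$ by integration by parts, with the boundary term killed by your $\sqrt{\log}$ growth bound.

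Each route has its advantage. Yours is self-contained and needs only an inequality for $J_0$, so completeness of $\{P_n'\}$ in the weighted space is never used. The paper's is shorter and yields an exact identity for $J_0$, at the price of a density step it does not prove.

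The one point you leave implicit is that $\Psi_z\in L^1_{\mathrm{loc}}(-1,1)$, so $\Psi$ has a locally absolutely continuous representative. This is what justifies both the integration by parts on $[-1+\epsilon,1-\epsilon]$ and your pointwise growth estimate.
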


\begin{proof}
By a standard density argument, it suffices to prove the estimate for polynomials. Let $P_n$ be the Legendre polynomial normalized by $P_n(1)=1$. For a polynomial $\Psi$, we write
\begin{equation*}
\Psi(z)=\sum_{n\geq0}a_nP_n(z).
\end{equation*}
Then, as in the proof of Lemma~\ref{CH-Poincare}, we have
\begin{equation*}
\begin{split}
M_0[\Psi] &= \int_{-1}^1\Psi(z)\,dz = 2a_0, \\
N_0[\Psi] &= \int_{-1}^1\Psi^2(z)\,dz = 2a_0^2 + \sum_{n\geq1} \frac{2}{2n+1}a_n^2, \\
J_0[\Psi] &= \int_{-1}^1(1-z^2)\Psi_z^2\,dz = \sum_{n\geq1} \frac{2n(n+1)}{2n+1}a_n^2.
\end{split}
\end{equation*}
Hence we obtain
\begin{equation*}
\frac12\left(J_0[\Psi]+M_0[\Psi]^2\right) = 2a_0^2 + \sum_{n\geq1} \frac{n(n+1)}{2n+1}a_n^2 \geq 2a_0^2 + \sum_{n\geq1} \frac{2}{2n+1}a_n^2 = N_0[\Psi].
\end{equation*}
The general case follows by density.
\end{proof}

For another proof of Lemma~\ref{Poincare}, we refer the reader to Appendix~B of \cite{KV1}.

\subsection{Scaling relation with the general quadratic-weight case}
We next relate the general quadratic-weight functionals \eqref{eq:general-M}--\eqref{eq:general-N} to the normalized functionals defined in \eqref{eq:normalized-forms}. Set
\begin{equation*}
y_c:=\frac{u_-+u_+}{2},
\end{equation*}
and introduce
\begin{equation*}
z:=\frac{2(y-y_c)}{L} = \frac{2y-(u_-+u_+)}{u_+-u_-}, \quad -1\le z\le 1.
\end{equation*}
Then we have
\begin{equation*}
y=y_c+\frac{L}{2}z, \quad dy=\frac{L}{2}\,dz.
\end{equation*}
For a function \(\Phi\) on \([u_-,u_+]\), we define
\begin{equation}\label{eq:hat-Phi}
\Psi(z) := \Phi\left(y_c+\frac{L}{2}z\right).
\end{equation}
Then $q$ and $V$ in \eqref{qV} can be rewritten in terms of $z$ as
\begin{equation}\label{eq:q-z-form}
q(y) = \frac{\kappa L^2}{4}(1-z^2), \quad V(y) = \frac32\kappa^3L^2(3z^2-1).
\end{equation}
A direct calculation gives the relations:
\begin{equation}\label{eq:scaling-MJQ}
M[\Phi] = \frac{L}{2}M_0[\Psi], \quad J[\Phi] = \frac{\kappa L}{2}J_0[\Psi], \quad Q[\Phi] = \kappa^3L^3Q_0[\Psi], \quad N[\Phi] = \frac{L}{2} N_0[\Psi],
\end{equation}
where we used
\begin{equation*}
B''(y)=6\kappa^2Lz, \quad B'''(y)=12\kappa^2, \quad \Phi_y=\frac{2}{L}\Psi_z, \quad \Phi_{yy}=\frac{4}{L^2}\Psi_{zz}.
\end{equation*}

\subsection{Completion of the proofs}
We now complete the proofs of Proposition~\ref{fourth} and Corollary~\ref{fourth-cor}.

\begin{proof}[Proof of Proposition~\ref{fourth}]
Let \(\Phi\) be as in Proposition~\ref{fourth}, and let \(\Psi\) be defined by \eqref{eq:hat-Phi}. By Lemma~\ref{CH-Poincare}, it holds that
\begin{equation}\label{eq:normalized-CH-Poincare-used}
Q_0[\Psi] + M_0[\Psi]^2 \ge \frac{3}{56} \left( J_0[\Psi] + M_0[\Psi]^2 \right).
\end{equation}
Using \eqref{eq:scaling-MJQ} and $\eta = 4\kappa^3L$, we compute
\begin{equation*}
\begin{aligned}
Q[\Phi]+\eta M[\Phi]^2 &= \kappa^3L^3Q_0[\Psi] + \eta\frac{L^2}{4}M_0[\Psi]^2\\
&= \kappa^3L^3 \left( Q_0[\Psi] + \frac{\eta}{4\kappa^3L} M_0[\Psi]^2 \right)\\
&= \kappa^3L^3 \left( Q_0[\Psi] + M_0[\Psi]^2 \right).
\end{aligned}
\end{equation*}
Therefore, using \eqref{eq:normalized-CH-Poincare-used} and \eqref{eq:scaling-MJQ}, we obtain
\begin{equation*}
Q[\Phi]+\eta M[\Phi]^2 \ge \frac{3}{56} \kappa^3L^3  \left( J_0[\Psi] + M_0[\Psi]^2 \right) = \frac{3}{56} \kappa^3L^3 \left( \frac{2}{\kappa L}J[\Phi] + \frac{4}{L^2}M[\Phi]^2 \right).
\end{equation*}
Hence \eqref{eq:generalized-fourth-order-Poincare} holds with, for example,
\begin{equation*}
c_{\kappa,L} := \frac{3}{56} \kappa^3L^3  \min\left\{ \frac{2}{\kappa L}, \frac{4}{L^2} \right\}.
\end{equation*}

We next prove \eqref{eq:generalized-Poincare}. Recall from \eqref{eq:scaling-MJQ} that
\begin{equation*}
N[\Phi] = \frac{L}{2} N_0[\Psi].
\end{equation*}
Applying Lemma~\ref{Poincare} to \(\Psi\), we obtain
\begin{equation*}
\begin{aligned}
N[\Phi] &\le \frac{L}{4} \left( J_0[\Psi] + M_0[\Psi]^2 \right).
\end{aligned}
\end{equation*}
Then, using the relations in \eqref{eq:scaling-MJQ}, we have
\begin{equation*}
N[\Phi] \le \frac{1}{2\kappa}J[\Phi] + \frac{1}{L}M[\Phi]^2.
\end{equation*}
This completes the proof.
\end{proof}

\begin{proof}[Proof of Corollary~\ref{fourth-cor}]
Set
\begin{equation*}
\begin{split}
A[\Phi] & :=\int_{u_-}^{u_+}q(y)^3\Phi_{yy}(y)^2\,dy, \\
D[\Phi] & :=Q[\Phi]+\eta M[\Phi]^2.
\end{split}
\end{equation*}
Then the desired inequality \eqref{fourth3} can be written as
\begin{equation} \label{desired}
J[\Phi]+A[\Phi] \le \widetilde C_{\kappa,L} D[\Phi].
\end{equation}
We prove below that such a constant $\widetilde C_{\kappa,L}>0$ exists.

First, the inequality \eqref{fourth1} in Proposition~\ref{fourth} gives directly
\begin{equation} \label{J_bound}
J[\Phi]+M[\Phi]^2\le \frac{1}{c_{\kappa,L}} D[\Phi].
\end{equation}
Therefore, it suffices to bound $A[\Phi]$ by $D[\Phi]$. By the explicit formula \eqref{eq:q-z-form}, $V$ extends continuously to $[u_-,u_+]$, so it is bounded from below. Hence there exists a constant $C_V>0$, depending only on $\kappa$ and $L$, such that
\begin{equation*}
V(y)\ge -C_V
\end{equation*}
for all $y\in[u_-,u_+]$. Therefore,
\begin{equation*}
-\int_{u_-}^{u_+}V(y)\Phi(y)^2\,dy\le C_VN[\Phi].
\end{equation*}
Using the definition \eqref{eq:general-Q} of $Q[\Phi]$, we have
\begin{equation*}
A[\Phi]=Q[\Phi]-\int_{u_-}^{u_+}V(y)\Phi(y)^2\,dy.
\end{equation*}
Since $D[\Phi]=Q[\Phi]+\eta M[\Phi]^2$ and $\eta M[\Phi]^2\ge0$, it follows that
\begin{equation*}
A[\Phi]\le D[\Phi]+C_VN[\Phi].
\end{equation*}
Here, by \eqref{fourth2} in Proposition~\ref{fourth}, there exists a constant $C_{\kappa,L}^{P}>0$ such that
\begin{equation*}
N[\Phi]\le C_{\kappa,L}^{P}\left(J[\Phi]+M[\Phi]^2\right) \le \frac{C_{\kappa,L}^{P}}{c_{\kappa,L}} D[\Phi],
\end{equation*}
where the second inequality holds by \eqref{J_bound}. Hence we obtain
\begin{equation*}
A[\Phi]\le \left(1+\frac{C_VC_{\kappa,L}^{P}}{c_{\kappa,L}} \right)D[\Phi].
\end{equation*}
Combining this with \eqref{J_bound}, we conclude that
\begin{equation*}
J[\Phi]+A[\Phi]\le \left( \frac{1}{c_{\kappa,L}} +1+ \frac{C_VC_{\kappa,L}^{P}}{c_{\kappa,L}} \right)D[\Phi] \leq \widetilde{C}_{\kappa,L} D[\Phi].
\end{equation*}
This proves \eqref{desired}, and hence \eqref{fourth3}.
\end{proof}

\appendix

\section{\texorpdfstring{Technical justification of the \(L^2\)-framework}{Technical justification of the L2-framework}} \label{app:L2-framework}

In this appendix, we provide the technical justifications needed for the \(L^2\)-based argument. We treat the short-time \(C^\alpha\cap L^2\) perturbation class, the density argument for the \(L^2\)-a priori estimate, the front-coordinate transformation, and the Duhamel representation with a divergence-form source.

\subsection{\texorpdfstring{Short-time $C^\alpha\cap L^2$ perturbation class}{Short-time Calpha cap L2 perturbation class}} \label{app:L2-persistence}

We recall the short-time existence theory of \cite{Ho4} and refine the resulting $C^\alpha$-class by imposing the additional \(L^2\)-perturbation assumption \(u_0-\bar u\in L^2(\mathbb R)\), where $u_0:=u(\cdot,0)$. This yields a short-time \(C^\alpha\cap L^2\) perturbation class with persistence and continuous dependence in the \(L^2\) norm.

\begin{lemma}[Short-time \(C^\alpha\)-class] \label{lem:Calpha-local-class}
Let \(R_0>0\). There exist constants \(T>0\) and \(R>0\), depending only on \(R_0\), such that for initial data \(u_0\) satisfying
\begin{equation*}
\|u_0\|_{C^\alpha(\mathbb R)}\le R_0,
\end{equation*}
the Cauchy problem for \eqref{eq:gCH} has a unique solution \(u\in\mathcal C_{T,R}^{\alpha}\), where \(\mathcal C_{T,R}^{\alpha}\) denotes the class of solutions satisfying
\begin{equation*}
u\in C^{\alpha,\alpha/4}(\mathbb R\times[0,T]), \quad \|u\|_{C^{\alpha,\alpha/4}(\mathbb R\times[0,T])}\le R,
\end{equation*}
and, for each \(\sigma\in(0,T)\),
\begin{equation*}
u\in C^{4+\alpha,1+\alpha/4}(\mathbb R\times[\sigma,T]).
\end{equation*}
\end{lemma}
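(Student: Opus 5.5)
The statement is Lemma~\ref{lem:Calpha-local-class}, the short-time existence result in the H\"older class. It is a recollection of the theory in \cite{Ho4}. I would not rebuild parabolic Schauder theory; I would reduce the statement to the framework of \cite{Ho4}.

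\textbf{Reformulation and fixed point.} Write \eqref{eq:gCH} in divergence form,
\begin{equation*}
u_t=-\partial_x^4u+\partial_x^2 B(u),
\end{equation*}
with the polynomial nonlinearity \eqref{eq:B}. For $u_0\in C^\alpha(\mathbb R)$, look for $u$ as a fixed point of the map
\begin{equation*}
\mathcal T[v](t)=e^{-t\partial_x^4}u_0+\int_0^t \partial_x^2 e^{-(t-s)\partial_x^4}B(v(s))\,ds.
\end{equation*}
The map acts on the closed ball of radius $R$ in $C^{\alpha,\alpha/4}(\mathbb R\times[0,T])$.

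\textbf{Kernel and self-map estimates.} The biharmonic heat kernel $K(t,\cdot)=t^{-1/4}k(\cdot\,t^{-1/4})$ has a Schwartz profile $k$. It satisfies $\|\partial_x^jK(t)\|_{L^1}\le Ct^{-j/4}$ and the corresponding H\"older-difference bounds. These give two estimates:
\begin{itemize}
\item The free part obeys $\|e^{-t\partial_x^4}u_0\|_{C^{\alpha,\alpha/4}}\le C\|u_0\|_{C^\alpha}$.
\item Since $\partial_x^2$ costs only $t^{-1/2}$, which is integrable, the Duhamel term is bounded in $C^{\alpha,\alpha/4}$ by $CT^{1/2-\epsilon}\|B(v)\|_{C^{\alpha,\alpha/4}}$ for some small $\epsilon>0$.
\end{itemize}
Because $B$ is a polynomial, $\|B(v)\|_{C^{\alpha,\alpha/4}}\le C(R)$ and $B$ is Lipschitz on the ball. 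Choose $R=2C R_0$ and then $T$ small, depending only on $R_0$. With these choices $\mathcal T$ maps the ball into itself and is a contraction. This yields existence and uniqueness in $\mathcal C^\alpha_{T,R}$.

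\textbf{Interior regularity.} For $\sigma>0$, bootstrap with parabolic Schauder estimates for the fourth-order operator on $[\sigma/2,T]$. The linear term $e^{-t\partial_x^4}u_0$ is smooth for $t>0$. The source $\partial_x^2B(u)$ gains regularity iteratively: from $C^{\alpha}$ to $C^{2+\alpha}$ in $x$, then to $C^{4+\alpha,1+\alpha/4}$. Each step uses the smoothing of $\partial_x^jK$ away from $t=0$ together with a cutoff in time. This gives $u\in C^{4+\alpha,1+\alpha/4}(\mathbb R\times[\sigma,T])$.

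\textbf{Main obstacle.} The delicate step is the Duhamel H\"older bound with the loss $\partial_x^2$ at the level of $C^\alpha$ data. The temporal $\alpha/4$ seminorm requires splitting $\int_0^t-\int_0^{t'}$ into near and far parts in the standard way. This is exactly what is carried out in \cite{Ho4}. I would cite it there and only check that the polynomial $B$ meets its local Lipschitz hypotheses, with constants depending only on $R_0$.
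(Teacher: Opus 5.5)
Your proposal is correct and follows the same route as the paper, whose proof simply applies \cite[Theorem~1.1]{Ho4} with $2p=4$ and takes the dependence of $T$ and $R$ on $R_0$ alone from the contraction argument in that short-time construction. Your Duhamel fixed point for $-\partial_x^4$ in $C^{\alpha,\alpha/4}(\mathbb R\times[0,T])$, followed by the Schauder bootstrap away from $t=0$, unpacks that citation faithfully; since the principal part here is linear with constant coefficients, your sketch is essentially self-contained.
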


\begin{proof}
This is obtained by a direct application of the short-time existence theorem \cite[Theorem~1.1]{Ho4} with \(2p=4\). The uniform choice of \(T\) and \(R\) follows from the contraction argument in the short-time construction.
\end{proof}

We now consider perturbations of the front \(\bar u\) in the \(C^\alpha\)-class, under the additional assumption
\begin{equation*}
\phi_0:=u_0-\bar u\in L^2(\mathbb R).
\end{equation*}
As in Section~\ref{sec2}, we define the shift function \(\delta\) and the perturbation \(\phi\) associated with \(u\in\mathcal C_{T,R}^{\alpha}\) by
\begin{equation} \label{delta_u}
\dot\delta(t)=-\eta\int_{\mathbb R}\bar u'(x)\big(u(x+\delta(t),t)-\bar u(x)\big)\,dx,\quad \delta(0)=0,
\end{equation}
and
\begin{equation} \label{phi_u}
\phi(x,t):=u(x+\delta(t),t)-\bar u(x).
\end{equation}
For \(t>0\), the perturbation $\phi$ satisfies
\begin{equation*}
\phi_t=-\phi_{xxxx}+\partial_x^2\big(B(\bar u+\phi)-B(\bar u)\big)+\dot\delta(\bar u'+\phi_x).
\end{equation*}
Let \(S(t):=e^{-t\partial_x^4}\) be the \(C_0\)-semigroup on \(L^2(\mathbb R)\) generated by \(-\partial_x^4\). Then, by Duhamel's formula,
\begin{equation} \label{Duh}
\begin{split}
\phi(t) &= S(t)\phi_0+\int_0^t\partial_x^2S(t-s)\big(B(\bar u+\phi(s))-B(\bar u)\big)\,ds \\
& \quad +\int_0^t\dot\delta(s)S(t-s)\bar u'\,ds+\int_0^t\dot\delta(s)\partial_xS(t-s)\phi(s)\,ds.
\end{split}
\end{equation}

\begin{proposition}[Short-time \(C^\alpha\cap L^2\)-class of perturbations] \label{prop:Calpha-L2-class}
Let \(R_0>0\) and \(M_0>0\). There exist constants \(T>0\), \(R>0\), and $C>0$, depending only on \(R_0\) and \(M_0\), such that the following statement holds.

Let $u_0$ be initial data satisfying
\begin{equation*}
\lVert u_0 \rVert_{C^\alpha(\mathbb{R})} \leq R_0, \quad \lVert \phi_0 \rVert_{L^2(\mathbb{R})} \leq M_0,
\end{equation*}
where $\phi_0 := u_0 - \bar{u}$. Let \(u \in\mathcal C_{T,R}^{\alpha}\) be the corresponding local solution given by Lemma~\ref{lem:Calpha-local-class}. Then the perturbation $\phi$ and the shift $\delta$, defined in \eqref{phi_u} and \eqref{delta_u}, respectively, satisfy
\begin{equation*}
\phi \in C ([0,T];L^2(\mathbb{R})), \quad \delta \in C^1([0,T]).
\end{equation*}
Moreover, if \(u^j\in\mathcal C^\alpha_{T,R}\), \(j=1,2\), are the local solutions with initial data \(u_0^j\) satisfying
\begin{equation*}
\|u_0^j\|_{C^\alpha(\mathbb R)}\le R_0,\quad \|u_0^j-\bar u\|_{L^2(\mathbb R)}\le M_0,\quad j=1,2,
\end{equation*}
then the perturbations \(\phi^j\) and shifts \(\delta^j\), defined by \eqref{phi_u} and \eqref{delta_u}, satisfy
\begin{equation*}
\|\phi^1-\phi^2\|_{C([0,T];L^2(\mathbb R))}+\|\delta^1-\delta^2\|_{C^1([0,T])}\le C\|u_0^1-u_0^2\|_{L^2(\mathbb R)}.
\end{equation*}
\end{proposition}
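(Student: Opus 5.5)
The plan is to work with the unshifted difference \(v(x,t):=u(x,t)-\bar u(x)\) first, and to transfer the conclusions to \(\phi\) and \(\delta\) at the end. Since \(\bar u\) is stationary, \(v\) satisfies
\[
v_t=-v_{xxxx}+\partial_x^2\big(B(\bar u+v)-B(\bar u)\big),\qquad v(0)=\phi_0 .
\]
Its Duhamel formula is
\[
v(t)=S(t)\phi_0+\int_0^t\partial_x^2S(t-s)G(s)\,ds,\qquad G:=B(\bar u+v)-B(\bar u).
\]
Because \(u\) and \(\bar u\) are bounded by \(R+C\) on \([0,T]\), the mean value theorem gives \(|G|\le C_R|v|\). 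Hence \(\|G(s)\|_{L^2}\le C_R\|v(s)\|_{L^2}\).

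The smoothing estimate \(\|\partial_x^2S(\tau)\|_{L^2\to L^2}\le C\tau^{-1/2}\) then yields
\[
\|v(t)\|_{L^2}\le\|\phi_0\|_{L^2}+C_R\int_0^t(t-s)^{-1/2}\|v(s)\|_{L^2}\,ds .
\]
A singular Gronwall inequality gives \(\sup_{[0,T]}\|v\|_{L^2}\le C\|\phi_0\|_{L^2}\), with \(C\) depending only on \(R\), \(T\) (hence on \(R_0\), \(M_0\)).

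The main obstacle is the a priori finiteness of \(\|v(t)\|_{L^2}\) needed to run Gronwall, since \(u\) is only known to be in \(C^{\alpha,\alpha/4}\). I would handle this by a fixed-point formulation: in \(C([0,T];L^2)\) the Duhamel map is a contraction on short intervals, because \(G\) is Lipschitz in \(v\) with constant \(C_R\) on the relevant \(L^\infty\)-ball. This produces an \(L^2\)-valued mild solution. A localized (weighted) uniqueness argument, for example testing against cutoffs \(\chi(x/N)\) and letting \(N\to\infty\), identifies it with the classical \(C^\alpha\) solution. Time continuity \(v\in C([0,T];L^2)\) follows from the strong continuity of \(S(t)\) together with the integrable kernel \((t-s)^{-1/2}\).

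For the shift, rewrite \eqref{delta_u} as
\[
\dot\delta=-\eta\int\bar u'(x)\big(u(x+\delta,t)-\bar u(x)\big)\,dx=-\eta\int\bar u'(x-\delta)v(x,t)\,dx-\eta\int\bar u'(x)\big(\bar u(x+\delta)-\bar u(x)\big)\,dx .
\]
The right-hand side is continuous in \(t\) (using \(v\in C([0,T];L^2)\)) and Lipschitz in \(\delta\). It is also bounded by \(C(M)+C|\delta|\), since \(|\bar u(x+\delta)-\bar u(x)|\le L\). Hence \(\delta\in C^1([0,T])\) with \(|\delta|\le C\) on \([0,T]\). Then \(\phi(\cdot,t)=v(\cdot+\delta(t),t)+\bar u(\cdot+\delta(t))-\bar u\), and the last difference is bounded in \(L^2\) by \(\|\bar u'\|_{L^2}|\delta|\). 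Translation is strongly continuous on \(L^2\), so \(\phi\in C([0,T];L^2)\).

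For the continuous dependence, I would apply the same Duhamel–Gronwall argument to \(v^1-v^2\). Using \(|G^1-G^2|\le C_R|v^1-v^2|\), this gives \(\sup_t\|v^1-v^2\|_{L^2}\le C\|u^1_0-u^2_0\|_{L^2}\). Subtracting the two shift ODEs and using the Lipschitz bounds, with \(\|\bar u''\|_{L^1}\) and \(\|u^j\|_{L^\infty}\le R\) controlling the \(\delta\)-dependence, gives
\[
|\dot\delta^1-\dot\delta^2|\le C\big(\|v^1-v^2\|_{L^2}+|\delta^1-\delta^2|\big).
\]
Gronwall then gives the \(C^1\) bound on \(\delta^1-\delta^2\). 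Finally,
\[
\phi^1-\phi^2=\big[v^1(\cdot+\delta^1)-v^2(\cdot+\delta^1)\big]+\big[v^2(\cdot+\delta^1)-v^2(\cdot+\delta^2)\big]+\big[\bar u(\cdot+\delta^1)-\bar u(\cdot+\delta^2)\big].
\]
The third bracket is \(O(|\delta^1-\delta^2|)\) in \(L^2\). The second bracket is not Lipschitz in \(\delta\) for a merely \(L^2\) function. I would instead estimate \(\phi^1-\phi^2\) directly through the shifted Duhamel formula \eqref{Duh}, whose terms \(\dot\delta\,\partial_xS\phi\) are Lipschitz thanks to the smoothing bound \(\|\partial_xS(\tau)\|\le C\tau^{-1/4}\). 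This avoids translating \(v^2\) and closes the estimate by one more Gronwall step.
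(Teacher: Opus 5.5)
Your proposal is correct, but its first half follows a different route from the paper. The paper never uses the unshifted variable. It treats the shifted problem as a closed nonlocal evolution for $\phi$, with $\dot\delta=-\eta\int_{\mathbb R}\bar u'\phi\,dx$, and runs a contraction for the Duhamel map $\mathcal T$ built from \eqref{Duh}, including the terms $\dot\delta_v S(t-s)\bar u'$ and $\dot\delta_v\partial_xS(t-s)v$. It then identifies the fixed point with the classical solution by uniqueness in the $C^\alpha$-class, and reads off both $L^2$-persistence and Lipschitz dependence from that single contraction, after shrinking $T$.

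You instead use that $v=u-\bar u$ solves an equation in which $\delta$ does not appear. Persistence then reduces to a linear singular Gronwall inequality with no quadratic or nonlocal terms, and the shift becomes a separate ODE. This is cleaner, and it makes explicit the a priori finiteness of $\|v(t)\|_{L^2}$, which the paper settles in one sentence. To make that step precise, note that $C([0,T];L^2)$ carries no $L^\infty$ control, so "the relevant $L^\infty$-ball" has to be implemented in one of two ways:
\begin{itemize}
\item run the contraction for a globally Lipschitz truncation of $B$ that agrees with $B$ on the range of $u$, combined with your weighted uniqueness argument; or
\item run it directly in $L^2\cap L^\infty$, using that $\partial_x^2S(\tau)$ also has $L^\infty$-operator norm $C\tau^{-1/2}$. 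Uniqueness of bounded mild solutions then identifies the fixed point with $u-\bar u$, and you step forward on intervals whose length depends only on $R$.
\end{itemize}

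The price of your route is exactly the obstruction you spotted: translation is not Lipschitz on $L^2$. For the difference estimate you must therefore return to \eqref{Duh}, that is, to the paper's argument, after checking that each $\phi^j$ satisfies it (for instance via the weak formulation of the shifted equation). Once you do, the bound $|\dot\delta^1-\dot\delta^2|\le\eta\|\bar u'\|_{L^2}\|\phi^1-\phi^2\|_{L^2}$ gives the $C^1$ estimate on $\delta^1-\delta^2$ directly. That makes your separate Gronwall argument for the shifts in the unshifted frame redundant. On the other hand, closing with a singular Gronwall inequality instead of a contraction spares you any reduction of $T$.
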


\begin{proof}
For \(v\in C([0,T];L^2(\mathbb R))\) such that \(\bar u+v\) belongs to the prescribed \(\mathcal{C}^\alpha_{T,R}\)-bounded class, set
\begin{equation*}
\dot\delta_v(t):=-\eta\int_{\mathbb R}\bar u'(x)v(x,t)\,dx
\end{equation*}
and denote by \(\mathcal T v\) the right-hand side of \eqref{Duh} with \(\phi\) and \(\dot\delta\) replaced by \(v\) and \(\dot\delta_v\), respectively. For such \(v\), the \(C^\alpha\)-bound and the polynomial structure of \(B\) give
\begin{equation*}
\|B(\bar u+v)-B(\bar u)\|_{L^2}\le C_R\|v\|_{L^2}.
\end{equation*}
The shift equation \eqref{delta_u} gives
\begin{equation*}
|\dot\delta_v(t)|\le \eta\|\bar u'\|_{L^2}\|v(\cdot,t)\|_{L^2}.
\end{equation*}
Using the Duhamel formula \eqref{Duh} and the semigroup estimates
\begin{equation*}
\|\partial_x^2S(t)f\|_{L^2}\le Ct^{-1/2}\|f\|_{L^2},\quad \|\partial_xS(t)f\|_{L^2}\le Ct^{-1/4}\|f\|_{L^2},
\end{equation*}
we obtain
\begin{equation*}
\begin{split}
\|\mathcal T v(t)\|_{L^2} &\le \|\phi_0\|_{L^2}+C_R\int_0^t(t-s)^{-1/2}\|v(s)\|_{L^2}\,ds \\
&\quad + C\int_0^t\|v(s)\|_{L^2}\,ds+C\int_0^t(t-s)^{-1/4}\|v(s)\|_{L^2}^2\,ds.
\end{split}
\end{equation*}
Thus the map $\mathcal{T}$ preserves the bound \(\|v\|_{C([0,T];L^2)}\le 2M_0\). The same estimates applied to the difference of two Duhamel formulas give a contraction in the \(L^2\)-component for sufficiently small \(T>0\). Hence \(\mathcal T\) has a unique fixed point in the \(L^2\)-component under the prescribed \(L^2\)-bound. By uniqueness in the \(C^\alpha\)-class, this fixed point agrees with the perturbation associated with the solution in the \(C^\alpha\)-class. Therefore $\phi\in C([0,T];L^2(\mathbb R))$. Moreover, since \(\bar u'\in L^2(\mathbb R)\), the shift equation \eqref{delta_u} gives \(\dot\delta\in C([0,T])\), and hence \(\delta\in C^1([0,T])\).

Finally, applying the same contraction estimate to the difference of two Duhamel formulas, retaining the initial difference, yields the stated continuous dependence estimate for \(\phi^1-\phi^2\), after reducing \(T>0\) if necessary. The estimate for \(\delta^1-\delta^2\) in \(C^1([0,T])\) follows directly from \eqref{delta_u}, since
\begin{equation*}
|\dot\delta^1(t)-\dot\delta^2(t)|\le \eta\|\bar u'\|_{L^2}\|\phi^1(\cdot,t)-\phi^2(\cdot,t)\|_{L^2}
\end{equation*}
and \(\delta^1(0)=\delta^2(0)=0\).
\end{proof}

\subsection{\texorpdfstring{Density argument in the a priori estimate}{Density argument in the a priori estimate}} \label{app:smooth-approx}

We provide the density argument invoked in the proof of Proposition~\ref{apriori}. Let \(\varepsilon_1>0\) be the constant obtained in Section~\ref{sec2.3} for smooth compactly supported perturbations. We first prove the estimate for admissible perturbations satisfying
\begin{equation*}
\sup_{0\le t\le T_0}\|\phi(\cdot,t)\|_{L^2(\mathbb R)}\le \varepsilon_2,
\end{equation*}
where \(\varepsilon_2>0\) is chosen so that
\begin{equation*}
2\varepsilon_2\le \varepsilon_1.
\end{equation*}
At the end of the argument, we rename \(\varepsilon_2\) as \(\varepsilon_1\) in Proposition~\ref{apriori}. We organize the argument into four steps.

\emph{Step 1: Approximation of the initial data.}
Let \(\phi\) be an admissible perturbation on \([0,T_0]\), and assume that
\begin{equation*}
\sup_{0\le t\le T_0}\|\phi(\cdot,t)\|_{L^2(\mathbb R)}\le\varepsilon_2.
\end{equation*}
Setting \(\phi_0:=\phi(\cdot,0)\), by Definition~\ref{def:admissible}, \(\phi_0\in C^\alpha(\mathbb R)\cap L^2(\mathbb R)\). Fix \(\beta\in(0,\alpha)\). Choose \(\phi_0^n\in C_c^\infty(\mathbb R)\) such that
\begin{equation*}
\phi_0^n\to\phi_0\quad\text{in }L^2(\mathbb R)\cap C^\beta(\mathbb R),
\end{equation*}
and such that the corresponding data \(u_0^n:=\bar u+\phi_0^n\) remain in a bounded subset of \(C^\alpha(\mathbb R)\). This can be achieved by a standard cutoff and mollification argument. Hence there exist \(R_0>0\) and \(M_0>0\), independent of \(n\), such that
\begin{equation*}
\|u_0^n\|_{C^\alpha(\mathbb R)}+\|u_0\|_{C^\alpha(\mathbb R)}\le R_0,\quad
\|\phi_0^n\|_{L^2(\mathbb R)}+\|\phi_0\|_{L^2(\mathbb R)}\le M_0.
\end{equation*}

\emph{Step 2: Construction and convergence of the approximate solutions.}
For each \(n\), let \(u^n\) denote the solution with initial data \(u_0^n\), and let \(\delta^n\) and \(\phi^n\) be the associated shift and perturbation defined by \eqref{delta_u} and \eqref{phi_u}. Since \(u_0^n\to u_0\) in \(C^\beta(\mathbb R)\), local \(C^\beta\)-well-posedness and continuous dependence first give convergence on a short time interval. Repeating this argument along a finite partition of \([0,T_0]\), and using that the admissible solution \(u\) is defined on the whole interval, we obtain that, for all sufficiently large \(n\), the solutions \(u^n\) exist on \([0,T_0]\) and remain uniformly bounded in \(C^{\beta,\beta/4}(\mathbb R\times[0,T_0])\). Applying Proposition~\ref{prop:Calpha-L2-class}, with \(\beta\) in place of \(\alpha\), on the same partition gives
\begin{equation*}
\|\phi^n-\phi\|_{C([0,T_0];L^2(\mathbb R))}\le C(T_0)\|\phi_0^n-\phi_0\|_{L^2(\mathbb R)}\to0.
\end{equation*}
Moreover, by the shift equation \eqref{delta_u},
\begin{equation*}
\|\dot\delta^n-\dot\delta\|_{C([0,T_0])}\le \eta\|\bar u'\|_{L^2(\mathbb R)}\|\phi^n-\phi\|_{C([0,T_0];L^2(\mathbb R))}\to0.
\end{equation*}
Since \(\delta^n(0)=\delta(0)=0\), this also gives \(\delta^n\to\delta\) in \(C^1([0,T_0])\). Therefore, for all sufficiently large \(n\),
\begin{equation*}
\sup_{0\le t\le T_0}\|\phi^n(\cdot,t)\|_{L^2(\mathbb R)} \le \sup_{0\leq t \leq T_0} \lVert \phi(\cdot,t) \rVert_{L^2} + \lVert \phi^n-\phi \rVert_{C([0,T_0];L^2)} \le2\varepsilon_2\le\varepsilon_1.
\end{equation*}

\emph{Step 3: Application of the smooth a priori estimate.}
For such \(n\), the perturbation \(\phi^n\) is generated by a smooth compactly supported initial perturbation. Hence the integrations by parts in Section~\ref{sec2.3}, including the front-coordinate calculation, are justified for \(\phi^n\). The a priori estimate for smooth perturbations gives
\begin{equation*}
\|\phi^n(\cdot,t)\|_{L^2}^2+c_0\int_0^t \left( \|\phi_x^n(\cdot,s)\|_{H^1}^2+\|\sqrt{\bar u'}\,\phi^n(\cdot,s)\|_{L^2}^2+|\dot\delta^n(s)|^2 \right) \,ds\le \|\phi_0^n\|_{L^2}^2
\end{equation*}
for all \(t\in[0,T_0]\).

\emph{Step 4: Passage to the limit.}
Fix \(t\in[0,T_0]\). The endpoint terms converge by the strong convergence in \(C([0,T_0];L^2)\). The uniform bound on \(\int_0^{T_0}\mathcal{D}_n(s)\,ds\) implies, after extracting a subsequence if necessary, that \(\phi_x^n\) converges weakly in \(L^2(0,T_0;H^1(\mathbb R))\). Since \(\phi^n\to\phi\) strongly in \(L^2(0,T_0;L^2(\mathbb R))\), this weak limit is \(\phi_x\) in the sense of distributions. Hence \(\phi_x\in L^2(0,T_0;H^1(\mathbb R))\), and lower semicontinuity implies
\begin{equation*}
\int_0^t\|\phi_x(\cdot,s)\|_{H^1}^2\,ds\le\liminf_{n\to\infty}\int_0^t\|\phi_x^n(\cdot,s)\|_{H^1}^2\,ds.
\end{equation*}
Since \(\sqrt{\bar u'}\in L^\infty(\mathbb R)\), we have
\begin{equation*}
\sqrt{\bar u'}\,\phi^n\to\sqrt{\bar u'}\,\phi\quad\text{in }L^2([0,T_0];L^2(\mathbb R)).
\end{equation*}
Together with \(\dot\delta^n\to\dot\delta\) in \(C([0,T_0])\), this yields
\begin{equation*}
\int_0^t \mathcal{D} (s)\,ds\le\liminf_{n\to\infty}\int_0^t \mathcal{D}_n(s)\,ds.
\end{equation*}
Letting \(n\to\infty\), we obtain
\begin{equation*}
\|\phi(\cdot,t)\|_{L^2}^2+c_0\int_0^t \mathcal{D}(s)\,ds\le \|\phi(\cdot,0)\|_{L^2}^2.
\end{equation*}
Since \(t\in[0,T_0]\) was arbitrary, the estimate holds on \([0,T_0]\). In particular,
\begin{equation*}
\phi_x\in L^2([0,T_0];H^1(\mathbb R)),\quad \sqrt{\bar u'}\,\phi\in L^2([0,T_0];L^2(\mathbb R)),\quad \dot\delta\in L^2([0,T_0]).
\end{equation*}
Thus the quantities appearing in \(\mathcal{D}(t)\) are well-defined for a.e. \(t\in[0,T_0]\).

\subsection{The front-coordinate transformation} \label{app:front-coordinate}

We justify the use of the front coordinate in the energy estimates. Suppose that \(\phi\in H^2(\mathbb R)\), and set
\begin{equation*}
y=\bar u(x),\quad \Phi(y)=\phi(x),\quad q(y)=\bar u'(x)=\kappa(y-u_-)(u_+-y).
\end{equation*}
Since \(dy=q(y)\,dx\), we have
\begin{equation*}
\int_{u_-}^{u_+}q(y)\Phi_y(y)^2\,dy=\int_{\mathbb R}\phi_x(x)^2\,dx.
\end{equation*}
Moreover,
\begin{equation*}
\phi_{xx}=B(y)\Phi_y+q(y)^2\Phi_{yy}.
\end{equation*}
Using \(B(y)=q(y)q_y(y)\) and the boundedness of \(q_y\), we obtain
\begin{equation*}
\int_{u_-}^{u_+}q(y)^3\Phi_{yy}(y)^2\,dy\le C\|\phi_{xx}\|_{L^2(\mathbb R)}^2+C\|\phi_x\|_{L^2(\mathbb R)}^2.
\end{equation*}
Therefore the normalized function \(\Psi(z)=\Phi(y_c+\frac{L}{2}z)\), where \(y_c=\frac{u_-+u_+}{2}\), belongs to the weighted space \(X\) defined in Appendix~\ref{density}. More precisely,
\begin{equation*}
\|\Psi\|_X\le C\|\phi\|_{H^2(\mathbb R)}.
\end{equation*}

It remains to justify that the endpoint contribution arising from the integration by parts in the front coordinate vanishes. Since \(B=q q_y\), we have
\begin{equation*}
B(y)q(y)\Phi_y(y)^2=q_y(y)\big(q(y)\Phi_y(y)\big)^2=q_y(\bar u(x))\phi_x(x)^2.
\end{equation*}
Here \(y\to u_\pm\) corresponds to \(x\to\pm\infty\). Moreover, \(\phi\in H^2(\mathbb R)\) implies \(\phi_x\in H^1(\mathbb R)\), so the continuous representative of \(\phi_x\) vanishes at spatial infinity. Hence
\begin{equation*}
\lim_{y\to u_\pm}B(y)q(y)\Phi_y(y)^2=0.
\end{equation*}
Thus \([Bq\Phi_y^2]_{u_-}^{u_+}=0\) whenever \(\phi(\cdot,t)\in H^2(\mathbb R)\). This endpoint cancellation is first justified for the smooth approximants. The resulting energy estimate, obtained using the front-coordinate integration by parts, is then passed to the limiting admissible solution by the density argument above. Since the dissipation estimate gives \(\phi_x\in L^2(0,T_0;H^1(\mathbb R))\), we have \(\lim_{y\to u_\pm}B(y)q(y)\Phi_y(y,t)^2=0\) for a.e. \(t\in(0,T_0)\). For classical solutions, parabolic regularization gives this for all \(t>0\).

\subsection{Justification of the Duhamel representation} \label{app:duhamel-justification}

We justify the Duhamel representation used in Section~\ref{sec3} for the source \(\mathcal F(\phi)\). By Lemma~\ref{lem:source-tail-asymp}, for any compact interval \(I=[T,T_1]\subset(0,\infty)\), it holds that
\begin{equation*}
\mathcal F(\phi)\in L^2(I;L^2(\mathbb R)).
\end{equation*}
Choose \(F^m\in C_c^\infty(I\times\mathbb R)\) such that \(F^m\to \mathcal F(\phi)\) in \(L^2(I;L^2)\), and set
\begin{equation*}
w^m(t):=\int_T^t e^{(t-s)\mathcal A_\infty}\partial_xF^m(s)\,ds.
\end{equation*}
Then \(w^m\) is the classical solution of
\begin{equation*}
w_t^m=\mathcal A_\infty w^m+\partial_xF^m,\quad w^m(T)=0.
\end{equation*}
Applying the energy estimate from Section~\ref{sec3.1.2} to \(w^m-w^\ell\) gives
\begin{equation*}
\sup_{T\le t\le T_1}\|w^m(t)-w^\ell(t)\|_{L^2}\le \frac{1}{\kappa L}\|F^m-F^\ell\|_{L^2(I;L^2)}.
\end{equation*}
Hence \(w^m\) converges in \(C(I;L^2(\mathbb R))\), independently of the approximating sequence. We define
\begin{equation*}
\int_T^t e^{(t-s)\mathcal A_\infty}\partial_x\mathcal F(\phi(s))\,ds :=\lim_{m\to\infty}\int_T^t e^{(t-s)\mathcal A_\infty}\partial_xF^m(s)\,ds,
\end{equation*}
where the limit is taken in \(C(I;L^2(\mathbb R))\). The same estimate yields
\begin{equation*}
\sup_{T\le t\le T_1}\left\|\int_T^t e^{(t-s)\mathcal A_\infty}\partial_x\mathcal F(\phi(s))\,ds\right\|_{L^2}\le \frac{1}{\kappa L}\|\mathcal F(\phi)\|_{L^2(I;L^2)}.
\end{equation*}
Let \(\tilde\phi\) denote the right-hand side of the Duhamel formula, namely
\begin{equation*}
\tilde\phi(t):=e^{(t-T)\mathcal A_\infty}\phi(\cdot,T)+\int_T^t e^{(t-s)\mathcal A_\infty}\partial_x\mathcal F(\phi(\cdot,s))\,ds.
\end{equation*}
By the construction above, \(\tilde\phi\) is the weak solution of the inhomogeneous problem with source \(\partial_x\mathcal F(\phi)\) and initial data \(\phi(\cdot,T)\). Since \(\phi\) satisfies the same problem in the distributional sense, the energy estimate applied to \(\phi-\tilde\phi\) gives uniqueness, and hence
\begin{equation*}
\phi(\cdot,t)=e^{(t-T)\mathcal A_\infty}\phi(\cdot,T)+\int_T^t e^{(t-s)\mathcal A_\infty}\partial_x\mathcal F(\phi(\cdot,s))\,ds
\end{equation*}
in \(L^2(\mathbb R)\). The identity \(\langle w,\partial_x\mathcal F(\phi)\rangle=-\langle w_x,\mathcal F(\phi)\rangle\), used in the energy estimate, is first justified for the smooth sources \(F^m\) and then passed to the limit by the same approximation argument.

\section{\texorpdfstring{Density of polynomials in the weighted space $X$}{Density of polynomials in the weighted space X}} \label{density}
Recall the weighted space $X$ appearing in the proof of Lemma~\ref{CH-Poincare}. More explicitly, we define
\begin{equation*}
X := \left\{\Psi\in L^2(-1,1): (1-z^2)^{1/2}\Psi_z\in L^2(-1,1),\  (1-z^2)^{3/2}\Psi_{zz}\in L^2(-1,1) \right\},
\end{equation*}
where the derivatives are understood in the sense of distributions, and we set
\begin{equation*}
\|\Psi\|_X^2 := \int_{-1}^1\Psi^2\,dz + \int_{-1}^1(1-z^2)\Psi_z^2\,dz + \int_{-1}^1(1-z^2)^3\Psi_{zz}^2\,dz.
\end{equation*}
In this appendix, we prove that polynomials are dense in $X$ with respect to $\|\cdot\|_X$.

Let $\Psi\in X$. We make the change of variables
\begin{equation*}
z=\tanh s, \quad f(s)=\Psi(\tanh s), \quad dz=\operatorname{sech}^2s\,ds, \quad 1-z^2=\operatorname{sech}^2s.
\end{equation*}
Using
\begin{equation*}
f_s=(1-z^2)\Psi_z, \quad f_{ss}=(1-z^2)^2\Psi_{zz}-2z(1-z^2)\Psi_z,
\end{equation*}
where $z=\tanh s$, we obtain
\begin{equation} \label{norm_id}
\|\Psi\|_X^2=\int_{\mathbb R} f^2\operatorname{sech}^2s\,ds+\int_{\mathbb R}f_s^2\,ds+\int_{\mathbb R}(f_{ss}+2(\tanh s)f_s)^2\,ds.
\end{equation}
This identity gives $f \in L_{\mathrm{loc}}^2(\mathbb{R})$, $f_s\in L^2(\mathbb R)$, and $f_{ss}+2(\tanh s)f_s\in L^2(\mathbb R)$. Since $\tanh s$ is bounded, it follows that $f_{ss}\in L^2(\mathbb R)$, and hence $f_s\in H^1(\mathbb R)$.

By the density of $C_c^\infty(\mathbb R)$ in $H^1(\mathbb R)$, we may choose a sequence $g_k\in C_c^\infty(\mathbb R)$ such that
\begin{equation*}
g_k\to f_s \quad \text{in } H^1(\mathbb R).
\end{equation*}
Define
\begin{equation*}
f_k(s):=f(0)+\int_0^s g_k(r)\,dr.
\end{equation*}
Then
\begin{equation*}
(f_k)_s\to f_s \quad \text{in } L^2(\mathbb R), \quad (f_k)_{ss}+2(\tanh s)(f_k)_s\to f_{ss}+2(\tanh s)f_s \quad \text{in } L^2(\mathbb R).
\end{equation*}
Moreover, since $f_k(0)=f(0)$, the Cauchy--Schwarz inequality gives
\begin{equation*}
|f_k(s)-f(s)|^2 = \left| \int_0^s(g_k(r)-f_s(r))\,dr \right|^2 \le |s|\|g_k-f_s\|_{L^2(\mathbb R)}^2.
\end{equation*}
Multiplying by $\operatorname{sech}^2s$ and integrating, we obtain
\begin{equation*}
\int_{\mathbb R}|f_k-f|^2\operatorname{sech}^2s\,ds\le \left(\int_{\mathbb R}|s|\operatorname{sech}^2s\,ds\right)\|g_k-f_s\|_{L^2(\mathbb R)}^2\to0.
\end{equation*}

Now we set
\begin{equation*}
\Psi_k(z):=f_k(\operatorname{arctanh}z).
\end{equation*}
Since $g_k$ is compactly supported, each $f_k$ is constant for sufficiently large positive and negative $s$. Hence $\Psi_k$ extends to a function in $C^\infty([-1,1])$. By the norm identity \eqref{norm_id} and the convergence of $f_k$ in the transformed norm, we have $\Psi_k\to\Psi$ in $X$.

Fix $k$. Applying the Weierstrass approximation theorem to $\Psi_k''$ and integrating twice, with the integration constants chosen to match $\Psi_k(-1)$ and $\Psi_k'(-1)$, gives polynomials $p_{k,j}$ such that $p_{k,j}\to\Psi_k$ in $C^2([-1,1])$. Since the weights in the $X$-norm are bounded on $[-1,1]$, this convergence implies
\begin{equation*}
\|p_{k,j}-\Psi_k\|_X \le C\|p_{k,j}-\Psi_k\|_{C^2([-1,1])}\to0.
\end{equation*}
Choosing $j=j(k)$ such that $\|p_{k,j(k)}-\Psi_k\|_X\le k^{-1}$ and setting $p_k:=p_{k,j(k)}$, we obtain
\begin{equation*}
\|p_k-\Psi\|_X\le k^{-1}+\|\Psi_k-\Psi\|_X\to0.
\end{equation*}
Therefore polynomials are dense in $X$.

\section{\texorpdfstring{Legendre coefficient formulas for \(Q_0\)}{Legendre coefficient formulas for Q0}} \label{app:Q0-coefficients}
In this appendix, we provide a detailed derivation of the Legendre coefficient formulas used in the proof of Lemma~\ref{CH-Poincare}. Let $P_n$ be the Legendre polynomial normalized by $P_n(1)=1$. Then it holds that
\begin{equation*}
-\big((1-z^2)P_n'\big)'=n(n+1)P_n
\end{equation*}
and
\begin{equation}\label{app:Legendre-orth}
\int_{-1}^1P_nP_m\,dz=h_n\delta_{nm},\quad h_n:=\frac{2}{2n+1}.
\end{equation}
For a polynomial $\Psi$, we write its Legendre expansion as
\begin{equation*}
\Psi(z)=\sum_{n\ge0}a_nP_n(z).
\end{equation*}
We express the quadratic form
\begin{equation} \label{Q00}
Q_0[\Psi]=\frac18\int_{-1}^{1}(1-z^2)^3\Psi_{zz}^2\,dz + \frac34\int_{-1}^{1}(3z^2-1)\Psi^2\,dz
\end{equation}
in terms of the Legendre coefficients $a_n$. More precisely, we prove that
\begin{equation} \label{Q0form}
Q_0[\Psi]=\sum_{n\ge0}d_na_n^2+2\sum_{n\ge0}e_na_na_{n+2},
\end{equation}
where $d_n$ and $e_n$ are given by \eqref{dnen}.

We first consider the fourth-order contribution, i.e., the first integral on the right-hand side of \eqref{Q00}:
\begin{equation*}
\frac18\int_{-1}^1(1-z^2)^3\Psi_{zz}^2\,dz=\frac18\sum_{n,m\ge0}a_na_m\int_{-1}^1(1-z^2)^3P_n''P_m''\,dz.
\end{equation*}
We define
\begin{equation*}
\mathcal L P_n:=\big((1-z^2)^3P_n''\big)''.
\end{equation*}
The operator $\mathcal L$ exhibits several structural properties that are well aligned with the Legendre basis. First, for each $n\ge0$, $\mathcal L P_n$ is a polynomial of degree at most $n+2$. Moreover, $\mathcal L$ is self-adjoint on Legendre polynomials. Indeed, since $(1-z^2)^3$ and its first derivative vanish at $z=\pm1$, integration by parts twice gives
\begin{equation}\label{app:L-self-adjoint}
\int_{-1}^1(\mathcal L P_m)P_n\,dz=\int_{-1}^1(1-z^2)^3P_m''P_n''\,dz=\int_{-1}^1(\mathcal L P_n)P_m\,dz.
\end{equation}
Finally, $\mathcal L$ preserves parity, since $(1-z^2)^3$ is even and differentiation twice preserves evenness and oddness.

These properties impose a restriction on the Legendre expansion of $\mathcal L P_n$: only the modes $P_{n-2}$, $P_n$, and $P_{n+2}$ may appear. To see this, fix $n$. Since $\deg(\mathcal L P_n)\le n+2$, the expansion of $\mathcal L P_n$ can involve only $P_0,\dots,P_{n+2}$. We now show that the coefficients of $P_m$ vanish for all $m\le n-4$. If $m\le n-4$, then $\deg(\mathcal L P_m)\le m+2\le n-2$. Hence $\mathcal L P_m$ is a polynomial of degree at most $n-2$, and therefore belongs to $\operatorname{span}\{P_0,\dots,P_{n-2}\}$. By the orthogonality \eqref{app:Legendre-orth} and the self-adjoint property \eqref{app:L-self-adjoint}, we have
\begin{equation*}
\int_{-1}^1(\mathcal L P_n)P_m\,dz=\int_{-1}^1P_n(\mathcal L P_m)\,dz=0 \quad \text{for } m \leq n-4.
\end{equation*}
Thus the expansion of $\mathcal{L} P_n$ can contain only the modes $P_{n+2},P_{n+1},P_n,P_{n-1},P_{n-2}$. Since $P_n(-z)=(-1)^nP_n(z)$, the Legendre polynomial $P_n$ is even when $n$ is even and odd when $n$ is odd. Recalling that $\mathcal L$ preserves parity, the modes $P_{n-1}$ and $P_{n+1}$ have the opposite parity to $\mathcal L P_n$, so their coefficients vanish. We may write $\mathcal{L}P_n$ in the form
\begin{equation}\label{app:L-three-term}
\mathcal L P_n=\alpha_nP_{n+2}+\beta_nP_n+\gamma_nP_{n-2},
\end{equation}
with the convention that $\gamma_nP_{n-2}=0$ for $n=0,1$.

We now determine the coefficients $\alpha_n$, $\beta_n$, and $\gamma_n$ in \eqref{app:L-three-term}. Let $\ell_n$ denote the coefficient of $z^n$ in $P_n$. From the three-term recurrence relation
\begin{equation} \label{recur}
(n+1)P_{n+1}(z)=(2n+1)zP_n(z)-nP_{n-1}(z),
\end{equation}
we obtain, by comparing coefficients,
\begin{equation*}
(n+1)\ell_{n+1}=(2n+1)\ell_n.
\end{equation*}
Since $\ell_0=1$, it follows that
\begin{equation*}
\ell_n=\prod_{k=0}^{n-1}\frac{2k+1}{k+1}=\frac{(2n)!}{2^n(n!)^2}.
\end{equation*}
Consequently,
\begin{equation*}
\frac{\ell_n}{\ell_{n+2}}=\frac{2^{-n}(2n)!/(n!)^2}{2^{-(n+2)}(2n+4)!/((n+2)!)^2}=\frac{(n+1)(n+2)}{(2n+1)(2n+3)}.
\end{equation*}
The coefficient of $z^{n+2}$ in $\mathcal L P_n$ is $-n(n-1)(n+3)(n+4)\ell_n$. Therefore $\alpha_n$ is given by
\begin{equation*}
\alpha_n=-\frac{(n-1)n(n+1)(n+2)(n+3)(n+4)}{(2n+1)(2n+3)}.
\end{equation*}
By self-adjointness \eqref{app:L-self-adjoint}, applied to the pair $(P_n,P_{n-2})$, and using \eqref{app:Legendre-orth}, we have
\begin{equation*}
\gamma_nh_{n-2}=\alpha_{n-2}h_n, \quad n\ge2,
\end{equation*}
and hence
\begin{equation*}
\gamma_n=-\frac{(n-3)(n-2)(n-1)n(n+1)(n+2)}{(2n-1)(2n+1)}, \quad n\ge2.
\end{equation*}
Finally, since $\mathcal L P_n(1)=0$ and $P_k(1)=1$ for all $k\ge0$, we have $\alpha_n+\beta_n+\gamma_n=0$. Thus
\begin{equation*}
\beta_n=-\alpha_n-\gamma_n=\frac{2n(n-1)(n+1)(n+2)(n^2+n+3)}{(2n-1)(2n+3)}.
\end{equation*}
Using \eqref{app:L-self-adjoint}, \eqref{app:L-three-term}, and \eqref{app:Legendre-orth}, we obtain
\begin{equation*}
\begin{split}
\int_{-1}^1(1-z^2)^3P_n''P_m''\,dz &=\int_{-1}^1(\mathcal L P_n)P_m \, dz \\
&=\alpha_nh_{n+2}\delta_{m,(n+2)}+\beta_nh_n\delta_{mn}+\gamma_nh_{n-2}\delta_{m,(n-2)}.
\end{split}
\end{equation*}
In particular, taking $m=n$ yields
\begin{equation*}
\int_{-1}^1(1-z^2)^3(P_n'')^2\,dz=\beta_nh_n.
\end{equation*}
Therefore
\begin{equation}\label{app:fourth-diag}
\frac18\int_{-1}^1(1-z^2)^3(P_n'')^2\,dz=\frac18\beta_nh_n=\frac{n(n-1)(n+1)(n+2)(n^2+n+3)}{2(2n-1)(2n+1)(2n+3)}.
\end{equation}
Taking $m=n+2$ in the same identity gives
\begin{equation*}
\int_{-1}^1(1-z^2)^3P_n''P_{n+2}''\,dz=\alpha_nh_{n+2}.
\end{equation*}
Since $h_{n+2}=\frac{2}{2n+5}$, we obtain
\begin{equation}\label{app:fourth-cross}
\frac18\int_{-1}^1(1-z^2)^3P_n''P_{n+2}''\,dz = \frac18\alpha_nh_{n+2} =-\frac{(n-1)n(n+1)(n+2)(n+3)(n+4)}{4(2n+1)(2n+3)(2n+5)}.
\end{equation}

We next consider the zeroth-order contribution:
\begin{equation*}
\frac34\int_{-1}^1(3z^2-1)\Psi^2 \,dz=\frac34\sum_{n,m\ge0}a_na_m\int_{-1}^1(3z^2-1)P_nP_m \,dz.
\end{equation*}
Applying the recurrence relation \eqref{recur} twice, we have
\begin{equation*}
z^2 P_n = \frac{n+1}{2n+1}zP_{n+1} + \frac{n}{2n+1}zP_{n-1} = \rho_nP_{n+2}+\sigma_nP_n+\tau_nP_{n-2},
\end{equation*}
where
\begin{equation*}
\rho_n=\frac{(n+1)(n+2)}{(2n+1)(2n+3)},\quad \sigma_n=\frac{2n^2+2n-1}{(2n-1)(2n+3)},\quad \tau_n=\frac{n(n-1)}{(2n-1)(2n+1)}.
\end{equation*}
Using \eqref{app:Legendre-orth}, we obtain
\begin{equation*}
\int_{-1}^1(3z^2-1)P_nP_m\,dz = 3\rho_nh_{n+2}\delta_{m,(n+2)}+(3\sigma_n-1)h_n\delta_{mn}+3\tau_nh_{n-2}\delta_{m,(n-2)}.
\end{equation*}
In particular, taking $m=n$ gives
\begin{equation}\label{app:potential-diag}
\frac34\int_{-1}^1(3z^2-1)P_n^2\,dz = \frac34(3\sigma_n-1)h_n=\frac{3n(n+1)}{(2n-1)(2n+1)(2n+3)}.
\end{equation}
Taking $m=n+2$ gives
\begin{equation}\label{app:potential-cross}
\frac34\int_{-1}^1(3z^2-1)P_nP_{n+2}\,dz = \frac94\rho_nh_{n+2}=\frac{9(n+1)(n+2)}{2(2n+1)(2n+3)(2n+5)}.
\end{equation}
Consequently,
\begin{equation*}
\frac34\int_{-1}^1(3z^2-1)\Psi^2\,dz = \sum_{n\ge0}\frac34(3\sigma_n-1)h_na_n^2+2\sum_{n\ge0}\frac94\rho_nh_{n+2}a_na_{n+2}.
\end{equation*}

Combining \eqref{app:fourth-diag} and \eqref{app:potential-diag}, the diagonal coefficient in \eqref{Q0form} is given by
\begin{equation*}
\begin{split}
d_n &= \frac{n(n-1)(n+1)(n+2)(n^2+n+3)}{2(2n-1)(2n+1)(2n+3)}+\frac{3n(n+1)}{(2n-1)(2n+1)(2n+3)} \\
& =\frac{n^2(n+1)^2(n^2+n+1)}{2(2n-1)(2n+1)(2n+3)}.
\end{split}
\end{equation*}
Similarly, combining \eqref{app:fourth-cross} and \eqref{app:potential-cross}, the off-diagonal coefficient is
\begin{equation*}
\begin{split}
e_n &=-\frac{(n-1)n(n+1)(n+2)(n+3)(n+4)}{4(2n+1)(2n+3)(2n+5)}+\frac{9(n+1)(n+2)}{2(2n+1)(2n+3)(2n+5)} \\
& =-\frac{(n+1)(n+2)(n^4+6n^3+5n^2-12n-18)}{4(2n+1)(2n+3)(2n+5)}.
\end{split}
\end{equation*}
With these formulas for $d_n$ and $e_n$, and noting that each off-diagonal term appears twice in the sum, we obtain \eqref{Q0form}.


\begin{thebibliography}{10}

\bibitem{AB}
	\newblock	W. Arendt and C. J. K. Batty,
	\newblock	\textit{Tauberian theorems and stability of one-parameter semigroups},
	\newblock	Trans. Amer. Math. Soc.
	\newblock	306 (1988),
	\newblock	pp. 837--852.

\bibitem{BKT}
	\newblock	J. Bricmont, A. Kupiainen, and J. Taskinen,
	\newblock	\textit{Stability of Cahn--Hilliard fronts},
	\newblock	Comm. Pure Appl. Math.
	\newblock	Vol. LII (1999),
	\newblock	pp. 839--871.

\bibitem{CH}
	\newblock	J.~W. Cahn and J.~E. Hilliard,
	\newblock	\textit{Free energy of a nonuniform system. I. Interfacial free energy},
	\newblock	J. Chem. Phys.
	\newblock	28 (1958),
	\newblock	pp. 258--267.

\bibitem{Cahn}
	\newblock	J.~W. Cahn,
	\newblock	\textit{On spinodal decomposition},
	\newblock	Acta Metall.
	\newblock	9 (1961),
	\newblock	pp. 795--801.

\bibitem{CCO}
	\newblock	E. A. Carlen, M. C. Carvalho, and E. Orlandi,
	\newblock	\textit{A simple proof of stability of fronts for the Cahn--Hilliard equation},
	\newblock	Commun. Math. Phys.
	\newblock	224 (2001),
	\newblock	pp. 323--340.

\bibitem{EZ}
	\newblock	C.~M. Elliott and S.~Zheng,
	\newblock	\textit{On the Cahn--Hilliard equation},
	\newblock	Arch. Ration. Mech. Anal.
	\newblock	96 (1986),
	\newblock	pp. 339--357.

\bibitem{EG}
	\newblock	C.~M. Elliott and H.~Garcke,
	\newblock	\textit{On the Cahn--Hilliard equation with degenerate mobility},
	\newblock	SIAM J. Math. Anal.
	\newblock	27 (1996),
	\newblock	pp. 404--423.

\bibitem{Henry}
	\newblock	D. Henry,
	\newblock	\textit{Geometric theory of semilinear parabolic equations},
	\newblock	Lecture Notes in Mathematics, Vol.~840,
	\newblock	Springer-Verlag,
	\newblock	Berlin, 1981.

\bibitem{Ho}
	\newblock	P. Howard,
	\newblock	\textit{Asymptotic behavior near transition fronts for equations of generalized Cahn--Hilliard form},
	\newblock	Commun. Math. Phys.
	\newblock	269 (2007),
	\newblock	pp. 765--808.

\bibitem{Ho1}
	\newblock	P. Howard,
	\newblock	\textit{Asymptotic behavior near planar transition fronts for the Cahn--Hilliard equation},
	\newblock	Physica D
	\newblock	229 (2007),
	\newblock	pp. 123--165.

\bibitem{Ho2}
	\newblock	P. Howard,
	\newblock	\textit{Spectral analysis of planar transition fronts for the Cahn--Hilliard equation},
	\newblock	J. Differential Equations
	\newblock	245 (2008),
	\newblock	pp. 594--615.

\bibitem{Ho20}
	\newblock	P. Howard,
	\newblock	\textit{Spectral analysis of stationary solutions of the Cahn--Hilliard equation},
	\newblock	Adv. Differential Equations
	\newblock	14 (2009),
	\newblock	pp. 87--120.

\bibitem{Ho3}
	\newblock	P. Howard,
	\newblock	\textit{Spectral analysis for transition front solutions in multidimensional Cahn--Hilliard systems},
	\newblock	J. Differential Equations
	\newblock	257 (2014),
	\newblock	pp. 3448--3465.

\bibitem{Ho4}
	\newblock	P. Howard,
	\newblock	\textit{Short-time existence theory toward stability for nonlinear parabolic systems},
	\newblock	J. Evol. Equ.
	\newblock	15 (2015),
	\newblock	pp. 403--456.

\bibitem{Ho5}
	\newblock	P. Howard,
	\newblock	\textit{Stability of transition front solutions in multidimensional Cahn--Hilliard systems},
	\newblock	J. Nonlinear Sci.
	\newblock	26 (2016),
	\newblock	pp. 619--661.

\bibitem{Ho6}
	\newblock	P. Howard,
	\newblock	\textit{Linear stability for transition front solutions in multidimensional Cahn--Hilliard systems},
	\newblock	J. Dyn. Diff. Equat.
	\newblock	29 (2017),
	\newblock	pp. 895--955.

\bibitem{HK}
	\newblock	P. Howard and B. Kwon,
	\newblock	\textit{Spectral analysis for transition front solutions in Cahn--Hilliard systems},
	\newblock	Discrete Contin. Dyn. Syst.
	\newblock	32 (2012),
	\newblock	pp. 125--166.

\bibitem{HK1}
	\newblock	P. Howard and B. Kwon,
	\newblock	\textit{Asymptotic $L^p$ stability for transition fronts in Cahn--Hilliard systems},
	\newblock	J. Differential Equations
	\newblock	252 (2012),
	\newblock	pp. 5814--5831.

\bibitem{HK2}
	\newblock	P. Howard and B. Kwon,
	\newblock	\textit{Asymptotic stability analysis for transition front solutions in Cahn--Hilliard systems},
	\newblock	Physica D
	\newblock	241 (2012),
	\newblock	pp. 1193--1222.

\bibitem{KV0}
	\newblock	M.-J. Kang and A. Vasseur,
	\newblock	\textit{Criteria on contractions for entropic discontinuities of systems of conservation laws},
	\newblock	Arch. Ration. Mech. Anal.,
	\newblock	222 (2016),
	\newblock	pp. 343--391.

\bibitem{KV}
	\newblock	M.-J. Kang and A. Vasseur,
	\newblock	\textit{$L^2$-contraction for shock waves of scalar viscous conservation laws},
	\newblock	Ann. Inst. H. Poincar\'e Anal. Non Lin\'eaire,
	\newblock	34 (2017),
	\newblock	pp. 139--156.

\bibitem{KV1}
	\newblock	M.-J. Kang and A. Vasseur,
	\newblock	\textit{Contraction property for large perturbations of shocks of the barotropic Navier--Stokes system},
	\newblock	J. Eur. Math. Soc.
	\newblock	23, 2 (2021),
	\newblock	pp. 585--638.

\bibitem{KV2}
	\newblock	M.-J. Kang and A. Vasseur,
	\newblock	\textit{Uniqueness and stability of entropy shocks to the isentropic Euler system in a class of inviscid limits from a large family of Navier--Stokes systems},
	\newblock	Invent. Math.,
	\newblock	224 (2021),
	\newblock	pp. 55--146.


\bibitem{LP}
	\newblock	Yu. I. Lyubich and V\~u Qu\^oc Phong,
	\newblock	\textit{Asymptotic stability of linear differential equations on Banach spaces},
	\newblock	Studia Math.
	\newblock	88 (1988),
	\newblock	pp. 37--42.

\bibitem{NCS}
	\newblock	A.~Novick-Cohen and L.~A. Segel,
	\newblock	\textit{Nonlinear aspects of the Cahn--Hilliard equation},
	\newblock	Physica D
	\newblock	10 (1984),
	\newblock	pp. 277--298.

\bibitem{MZ1}
	\newblock	C. Mascia and K. Zumbrun,
	\newblock	\textit{Pointwise Green function bounds for shock profiles of systems with real viscosity},
	\newblock	Arch. Ration. Mech. Anal.
	\newblock	169 (2003),
	\newblock	pp. 177--263.
	
\bibitem{MZ2}
	\newblock	C. Mascia and K. Zumbrun,
	\newblock	\textit{Stability of large-amplitude viscous shock profiles for hyperbolic-parabolic systems},
	\newblock	Arch. Ration. Mech. Anal.
	\newblock	172 (2003),
	\newblock	pp. 93--131.
	
\bibitem{MZ3}
	\newblock	C. Mascia and K. Zumbrun,
	\newblock	\textit{Stability of small-amplitude shock profiles of symmetric hyperbolic-parabolic systems},
	\newblock	Comm. Pure Appl. Math.
	\newblock	57 (2004),
	\newblock	pp. 841--876.



\bibitem{ZH}
	\newblock	K.~Zumbrun and P.~Howard,
	\newblock	\textit{Pointwise semigroup methods and stability of viscous shock waves},
	\newblock	Indiana Univ. Math. J.
	\newblock	47 (1998),
	\newblock	no.~3, pp. 741--871.

\end{thebibliography}
\end{document}